\theoremstyle{plain}
\newcommand{\Tr}{\operatorname{Tr}}
\newcommand{\Fr}{\operatorname{Fr}}
\newcommand{\Sh}{\operatorname{Sh}}
\newcommand{\ev}{\operatorname{ev}}
\newcommand{\rec}{\operatorname{rec}}
\newcommand{\gr}{\operatorname{gr}}
\newcommand{\pr}{\operatorname{pr}}
\newcommand{\Hom}{\operatorname{Hom}}
\newcommand{\End}{\operatorname{End}}
\newcommand{\Ext}{\operatorname{Ext}}
\newcommand{\Mod}{\operatorname{Mod}}
\newcommand{\Ind}{\operatorname{Ind}}
\newcommand{\ind}{\operatorname{ind}}
\newcommand{\inj}{\operatorname{inj}}
\newcommand{\pro}{\operatorname{pro}}
\newcommand{\cores}{\operatorname{cores}}
\newcommand{\Tor}{\operatorname{Tor}}
\newcommand{\chara}{\operatorname{char}}
\def\anti{\EuScript J}
\newtheorem{theorem}{Theorem}[section]
\newtheorem{corollary}[theorem]{Corollary}
\newtheorem{lemma}[theorem]{Lemma}
\newtheorem{proposition}[theorem]{Proposition}
\newtheorem{definition}[theorem]{Definition}
\newtheorem{example}[theorem]{Example}
\theoremstyle{definition}
\newtheorem{remark}[theorem]{Remark}
\title{\textbf{dg-Hecke duality and tensor products}}
\author{Peter Schneider, Claus Sorensen}
\date{}
\begin{document}
\maketitle

\begin{abstract}
We continue our study of the monoidal category $D(G)$ begun in \cite{SS}. At the level of cohomology we transfer the duality functor $R\underline{\Hom}(-,k)$ to the derived category of dg-modules $D(H_U^{\bullet})$. In the process we develop a more general and streamlined approach to the anti-involution $\anti$ from \cite{OS}. We also verify that the
tensor product on $D(G)$ corresponds to an operadic tensor product on the dg-side (cf \cite{KM}). This uses a result of Schn\"{u}rer on dg-categories with a model structure.
\end{abstract}



\section{Introduction and background}

\subsection{Notation}

The following notation will remain in force throughout this article. We let $G$ be a fixed $p$-adic Lie group of dimension $d$ over $\Bbb{Q}_p$, and we let $k$ be a field of characteristic $p$. We denote by $\Mod(G)$ the abelian category of smooth $G$-representations on $k$-vector spaces and by $D(G)$ its (unbounded) derived category.

\subsection{The monoidal structure on $D(G)$ and dg Hecke modules}

In our first paper \cite{SS} Cor.\ 3.2 we have shown that $D(G)$ is a closed symmetric monoidal category. The product is the naive tensor product $V_1^\bullet \otimes_k V_2^\bullet$ of complexes with the diagonal $G$-action; the unit object is the trivial representation $k$. The internal Hom-functor $R\underline{\Hom}$ is the total derived functor
\begin{equation*}
  R\underline{\Hom}(V_1^\bullet, V_2^\bullet) = \varinjlim_K \Hom_{\Mod(K)}^\bullet (V_1^\bullet, J^\bullet)
\end{equation*}
where $V_2^\bullet \xrightarrow{\sim} J^\bullet$ is a homotopically injective resolution and where the inductive limit runs over all compact open subgroups $K \subseteq G$. Of particular interest is the (derived) duality functor $R\underline{\Hom}(-,k)$.

Throughout the paper we fix an open subgroup $U \subseteq G$ which is pro-$p$ and torsion free. In \cite{SS} we introduced the full subcategory $D(G)^a$ of globally admissible complexes  $V^\bullet$ which are those for which each cohomology group $H^i(U,V^\bullet)$, for $i \in \mathbb{Z}$, is a finite dimensional $k$-vector space. By \cite{SS} Cor.\ 4.3 and Prop.\ 4.5, $D(G)^a$ is the largest subcategory of $D(G)$ on which the duality functor $R\underline{\Hom}(-,k)$ restricts to an involution (and independent of $U$).

We now consider the compact induction $\mathbf{X}_U := \ind_U^G(k)$ in $\Mod(G)$ together with the differential graded $k$-algebra $H_U^\bullet$ which is defined as the opposite of the differential graded endomorphism algebra of a fixed injective resolution $\mathbf{X}_U \xrightarrow{\simeq} \mathcal{I}^\bullet$ in $\Mod(G)$ of the representation $\mathbf{X}_U$ (cf.\ \cite{DGA} \S 3). It has nonzero cohomology at most in degrees $0$ up to $d$, and $h^0(H_U^\bullet) = H_U = k[U \backslash G / U]$ is the usual Hecke algebra of the pair $(G,U)$. We let $D(H_U^\bullet)$ denote the derived category of left differential graded $H_U^\bullet$-modules. The main result (Thm.\ 9) in \cite{DGA} says that the functor $RH^0(U,-)$ lifts in a natural way to an equivalence of triangulated categories
\begin{equation*}
  H : D(G) \xrightarrow{\;\simeq\;} D(H_U^\bullet)
\end{equation*}
making the diagram
\begin{equation*}
  \xymatrix{
  D(G) \ar[dr]_{RH^0(U,-)} \ar[r]^{H}
                & D(H_U^\bullet) \ar[d]^{\mathrm{cohomology}}  \\
                & D(k)             }
\end{equation*}
commutative. We introduce the strictly full triangulated subcategory $D_{fin}(H_U^\bullet)$ of $D(H_U^\bullet)$ consisting of all objects all of whose cohomology vector spaces are finite dimensional. By definition the equivalence $H$ restricts to an equivalence
\begin{equation*}
  H : D(G)^a \xrightarrow{\;\simeq\;} D_{fin}(H_U^\bullet) \ .
\end{equation*}

We may transfer the closed symmetric monoidal structure of $D(G)$ via the equivalence $H$ to $D(H_U^\bullet)$. For this we recall the precise definition of $H$ and its quasi-inverse $T$. Let $K(G)$ denote the category of unbounded complexes in $\Mod(G)$ with homotopy classes of of chain maps as morphisms and $K_{\inj}(G)$ the full triangulated subcategory of homotopically injective complexes. The obvious functor $q_G : K(G) \rightarrow D(G)$ restricts to an equivalence $q_G : K_{\inj}(G) \xrightarrow{\simeq} D(G)$, and we let $\mathbf{i}$ denote a quasi-inverse of the latter. Correspondingly, we have the homotopy category $K(H_U^\bullet)$ of differential graded left $H_U^\bullet$ and its subcategory $K_{\pro}(H_U^\bullet)$ of homotopically projective modules; the obvious functor $q_H : K(H_U^\bullet) \rightarrow D(H_U^\bullet)$ restricts to an equivalence $q_H : K_{\pro}(H_U^\bullet) \xrightarrow{\simeq} D(H_U^\bullet)$, and we denote by $\mathbf{p}$ a quasi-inverse functor. Then
\begin{equation*}
  H(V^\bullet) = q_H(\Hom_{\Mod(G)}^\bullet(\mathcal{I}^\bullet, \mathbf{i}V^\bullet))  \quad\text{and}\quad   T(M^\bullet) = q_G(\mathcal{I}^\bullet \otimes_{H_U^\bullet} \mathbf{p}M^\bullet) \ .
\end{equation*}
It will be convenient in the following to also fix a (homotopy) injective resolution $k \xrightarrow{\simeq} \mathcal{J}^\bullet$ of the trivial $G$-representation $k$.

\begin{example}
We immediately see that $\mathbf{U}:=H(k) = q_H(\Hom_{\Mod(G)}^\bullet(\mathcal{I}^\bullet, \mathcal{J}^\bullet))$ is the unit object in $D(H_U^\bullet)$; its cohomology is the group cohomology $\Ext_{\Mod(G)}^*(\mathbf{X}_U,k) = H^*(U,k)$.
\end{example}

Part of the motivation behind this paper was to describe the resulting monoidal structure on $D(H_U^\bullet)$ in intrinsic terms. We come close to achieving this goal as we explain next.

\subsection{A summary of our main results}

Let $V^{\bullet}$ be an object of $D(G)$ as above. The corresponding dg module $H(V^{\bullet})$ has cohomology $H^*(U,V^{\bullet})$ which is a graded right module over the Yoneda algebra $E^*=\Ext_{\Mod(G)}^*(\mathbf{X}_U,\mathbf{X}_U)$. Ideally one would like a description of $H(R\underline{\Hom}(V^{\bullet},k))$ as a certain intrinsic dg dual of
$H(V^{\bullet})$. This is given by our functor $\Delta$ in section \ref{sec:duality-functor}, but it is admittedly not a very workable definition. Instead, after passing to cohomology, we obtain a satisfying description of the $E^*$-module $H^*(U,R\underline{\Hom}(V^{\bullet},k))$ as a certain dual of $H^*(U,V^{\bullet})$. This is our Theorem \ref{maindual} which we restate here with a different notation:

\medskip

\noindent {\bf{Theorem A}}. {\it{There is an isomorphism of graded right $E^*$-modules}}
$$
H^*(U,R\underline{\Hom}(V^{\bullet},k))\simeq \Hom_k^{\text{gr}}(H^*(U,V^{\bullet}),k)[-d]^{\anti \otimes \chi_G}
$$

\medskip

We refer the reader to section \ref{subsec:Delta} for more details on the right-hand side of this isomorphism. Here we will just highlight that the superscript $\anti \otimes \chi_G$ means
we turn the right-hand side into a {\it{right}} $E^*$-module via an anti-involution which is a twist of (a generalization of) the one in \cite{OS}. The arguments leading up to \cite[Prop.~2.7]{SS} essentially prove Theorem A for the Hecke algebra $E^0$. The result for the full Yoneda algebra $E^*$ is a vast generalization thereof, and the proof is significantly more involved.

The anti-involution $\anti: E^* \longrightarrow E^*$ is at the heart of the argument. It was previously studied in \cite{OS} for a $p$-adic reductive group with $U$ being a pro-$p$ Iwahori subgroup. In Section \ref{subsec:coho-anti} we give a more conceptual definition of $\anti$ which applies to any pair of groups $(G,U)$ as above.

A crucial intermediate step of the proof of Theorem A is the special case where $V^{\bullet}$ is the representation ${\bf{X}}_U$ concentrated in degree zero (Corollary \ref{cor:swap-Jchi}). A bit of unwinding shows this amounts to $\anti\otimes \chi_G$ being dual to the ${\bf{X}}_U$-factor switch on $\Ext_{\Mod(G)}^*({\bf{X}}_U \otimes_k {\bf{X}}_U,k)$.

To explain our results on the transfer of the tensor product, let $V^\bullet$ and $W^\bullet$ be two objects from $D(G)$. We are aiming for a description of $H(V^\bullet\otimes_k W^\bullet)$ as a tensor product of the dg modules $H(V^\bullet)$ and $H(W^\bullet)$. This exploits a certain dg $(H_U^\bullet,H_U^\bullet \otimes_k H_U^\bullet)$-bimodule $H_U^\bullet(2)$ (cf. Definition \ref{dgbi}). The result is reminiscent of the operadic tensor product discussed in \cite{KM}.
Here is a reformulation of Theorem our \ref{tensor}:

\medskip

\noindent {\bf{Theorem B}}. {\it{There is a natural isomorphism in $D(H_U^\bullet)$,}}
$$
H_U^\bullet(2) \otimes_{H_U^\bullet \otimes_k H_U^\bullet}^L \big(H(V^\bullet) \otimes_k H(W^\bullet) \big) \simeq H(V^\bullet\otimes_k W^\bullet).
$$

\medskip

At the level of cohomology this yields an Eilenberg-Moore spectral sequence (Corollary \ref{EMSS}) computing $H^*(U,V^\bullet\otimes_k W^\bullet)$ in terms of
$H^*(U,V^\bullet)$ and $H^*(U,W^\bullet)$.

The very definition of $H_U^\bullet(2)$ is a bit subtle. When $G$ is compact $\Hom_{\Mod(G)}^{\bullet}(\mathcal{I}^\bullet, \mathcal{I}^\bullet \otimes_k \mathcal{I}^\bullet)$ works.
The problem is $\mathcal{I}^\bullet \otimes_k \mathcal{I}^\bullet$ need not be an injective resolution of
${\bf{X}}_U \otimes_k {\bf{X}}_U$ when $G$ is non-compact, in which case we replace $\mathcal{I}^\bullet \otimes_k \mathcal{I}^\bullet$ by an injective resolution. However, to retain the
$H_U^\bullet \otimes_k H_U^\bullet$-module structure we need to resolve $\mathcal{I}^\bullet \otimes_k \mathcal{I}^\bullet$ in a dg functorial way. This can be done due to a general result of Schn\"{u}rer \cite{Schn} on dg categories with a model structure.\\

\noindent\textbf{Acknowledements:} The first author acknowledges support by the Deutsche Forschungsgemeinschaft
(DFG, German Research Foundation) – Project-ID 427320536 – SFB 1442, as well as
under Germany’s Excellence Strategy EXC 2044 390685587,
Mathematics Münster: Dynamics–Geometry–Structure.


\section{Transfer of the duality functor}\label{sec:duality-functor}

\subsection{The differential graded situation}

We first discuss the special case of the duality functor $R\underline{\Hom}(-,k)$. Inserting the definitions we have
\begin{align*}
  \Delta(M^\bullet) & := H(R\underline{\Hom}(T(M^\bullet),k)) \\
   & = q_H(\Hom_{\Mod(G)}^\bullet(\mathcal{I}^\bullet, \mathbf{i}\underline{\Hom}^\bullet(\mathcal{I}^\bullet \otimes_{H_U^\bullet} \mathbf{p}M^\bullet,\mathcal{J}^\bullet)))
\end{align*}
In the proof of \cite{SS} Prop.\ 4.5 we observed that with $\mathcal{J}^\bullet$ also the complex $\underline{\Hom}^\bullet(\mathcal{I}^\bullet \otimes_{H_U^\bullet} \mathbf{p}M^\bullet,\mathcal{J}^\bullet)$ is homotopically injective. We deduce that
\begin{align*}
  \Delta(M^\bullet) & = q_H(\Hom_{\Mod(G)}^\bullet(\mathcal{I}^\bullet, \underline{\Hom}^\bullet(\mathcal{I}^\bullet \otimes_{H_U^\bullet} \mathbf{p}M^\bullet,\mathcal{J}^\bullet)))  \\
    & = q_H(\Hom_{\Mod(G)}^\bullet(\mathcal{I}^\bullet \otimes_k \mathcal{I}^\bullet \otimes_{H_U^\bullet} \mathbf{p}M^\bullet,\mathcal{J}^\bullet))  \qquad\qquad\text{by adjunction}  \\
    & = q_H(\Hom_{H_U^\bullet}(pM^\bullet,\Hom_{\Mod(G)}^\bullet(\mathcal{I}^\bullet \otimes_k \mathcal{I}^\bullet, \mathcal{J}^\bullet)))  \qquad\text{by $\Hom$-$\otimes$ adjunction}.
\end{align*}
The last term above is a differential graded left $H_U^\bullet$-module induced by the right $H_U^\bullet$-module structure of the left tensor factor $\mathcal{I}^\bullet$. In fact $\Hom_{\Mod(G)}^\bullet(\mathcal{I}^\bullet \otimes_k \mathcal{I}^\bullet, \mathcal{J}^\bullet)$ is a differential graded left $H_U^\bullet \otimes_k H_U^\bullet$-module, which by a slight abuse of notation we denote by $R\Hom_{\Mod(G)}(\mathbf{X}_U \otimes_k \mathbf{X}_U,k)$. Using \cite{Yek} Prop.\ 14.3.15 we finally obtain that the functor $\Delta$
is given by
\begin{align*}
  \Delta : D(H_U^\bullet)^{op} & \longrightarrow D(H_U^\bullet) \\
                 M^\bullet & \longmapsto R\Hom_{H_U^\bullet}(M^\bullet, R\Hom_{\Mod(G)}(\mathbf{X}_U \otimes_k \mathbf{X}_U,k)) \ .
\end{align*}
Here $R\Hom_{H_U^\bullet}$ is formed w.r.t.\ the $H_U^\bullet$-module structure coming from the right factor $\mathbf{X}_U$. The $H_U^\bullet$-module structure on the target comes from the left factor $\mathbf{X}_U$.

\begin{proposition}\label{prop:onesided-bimod}
  With respect to the $H_U^\bullet$-action through the right hand factor $\mathcal{I}^\bullet$ we have in $D(H_U^\bullet)$ a natural isomorphism
\begin{equation*}
  R\Hom_{\Mod(G)}(\mathbf{X}_U \otimes_k \mathbf{X}_U,k) \simeq \Hom_k(H_U^\bullet[d],k) \ .
\end{equation*}
\end{proposition}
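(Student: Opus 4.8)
The plan is to reduce the statement, via the tensor identity and Frobenius reciprocity, to Poincar\'e duality for the compact $p$-adic analytic group $U$, and to extract the $H_U^\bullet$-equivariance from the naturality of that duality. First I would pass from $G$ to $U$. Over the field $k$ the functor $-\otimes_k\mathcal{I}^\bullet$ is exact, so the augmentation $\mathcal{I}^\bullet\xrightarrow{\simeq}\mathbf{X}_U$ in the \emph{left} tensor factor induces a quasi-isomorphism $\mathcal{I}^\bullet\otimes_k\mathcal{I}^\bullet\xrightarrow{\simeq}\mathbf{X}_U\otimes_k\mathcal{I}^\bullet$, equivariant for the $H_U^\bullet$-action through the untouched right factor; since $\mathcal{J}^\bullet$ is homotopically injective, applying $\Hom_{\Mod(G)}^\bullet(-,\mathcal{J}^\bullet)$ yields an $H_U^\bullet$-equivariant isomorphism in $D(H_U^\bullet)$ onto $\Hom_{\Mod(G)}^\bullet(\mathbf{X}_U\otimes_k\mathcal{I}^\bullet,\mathcal{J}^\bullet)$. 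Next I apply the natural isomorphism $\mathbf{X}_U\otimes_k V\cong\ind_U^G(\res_U^G V)$ (the tensor identity for compact induction) with $V=\mathcal{I}^\bullet$, followed by the adjunction $\ind_U^G\dashv\res_U^G$. Since $\res_U^G$ has the exact left adjoint $\ind_U^G$ it preserves homotopically injective complexes, so $\res_U^G\mathcal{J}^\bullet$ is a homotopically injective resolution of $k$ in $\Mod(U)$, and we obtain an $H_U^\bullet$-equivariant identification
\[
R\Hom_{\Mod(G)}(\mathbf{X}_U\otimes_k\mathbf{X}_U,k)\;\simeq\;\Hom_{\Mod(U)}^\bullet(\res_U^G\mathcal{I}^\bullet,\res_U^G\mathcal{J}^\bullet)\;=\;R\Hom_{\Mod(U)}(\res_U^G\mathbf{X}_U,k),
\]
where $H_U^\bullet=\End^\bullet_{\Mod(G)}(\mathcal{I}^\bullet)^{op}$ now acts through the ring homomorphism $H_U^\bullet\to\End^\bullet_{\Mod(U)}(\res_U^G\mathcal{I}^\bullet)^{op}$, that is, by precomposition on the first argument.

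In parallel I would unwind the right-hand side of the proposition. Precomposition with the augmentation $\mathbf{X}_U\to\mathcal{I}^\bullet$ gives a quasi-isomorphism $H_U^\bullet=\End^\bullet_{\Mod(G)}(\mathcal{I}^\bullet)^{op}\xrightarrow{\simeq}\Hom_{\Mod(G)}^\bullet(\mathbf{X}_U,\mathcal{I}^\bullet)^{op}$, again because $\mathcal{I}^\bullet$ is homotopically injective; this is an isomorphism of \emph{right} $H_U^\bullet$-modules, the right-regular structure corresponding to postcomposition by $\End^\bullet_{\Mod(G)}(\mathcal{I}^\bullet)$ on the target. Frobenius reciprocity identifies the right-hand side with $(\res_U^G\mathcal{I}^\bullet)^{U}=R\Hom_{\Mod(U)}(k,\res_U^G\mathbf{X}_U)$, the Hecke action becoming the functorial action on the coefficient module $\res_U^G\mathbf{X}_U$ through $\End^\bullet_{\Mod(U)}(\res_U^G\mathcal{I}^\bullet)$. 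Since the left $H_U^\bullet$-module structure on a $k$-linear dual of $H_U^\bullet$ is by definition the one dual to right multiplication, and $\Hom_k(H_U^\bullet[d],k)=\Hom_k(H_U^\bullet,k)[-d]$ as complexes, the target of the proposition is identified, as a left $H_U^\bullet$-module, with $\Hom_k\big(R\Hom_{\Mod(U)}(k,\res_U^G\mathbf{X}_U),k\big)[-d]$.

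Thus the claim is reduced to a natural (in $M\in\Mod(U)$) isomorphism $R\Hom_{\Mod(U)}(M,k)\simeq\Hom_k\big(R\Hom_{\Mod(U)}(k,M),k\big)[-d]$, evaluated at $M=\res_U^G\mathbf{X}_U$; naturality in $M$ then promotes it to an isomorphism of $H_U^\bullet$-modules, because in both of the unwindings above the Hecke algebra acts through the endomorphisms of $\res_U^G\mathcal{I}^\bullet$ induced from $\End^\bullet_{\Mod(G)}(\mathcal{I}^\bullet)$. This isomorphism is Poincar\'e duality for $U$: being torsion-free and compact $p$-adic analytic of dimension $d$, the group $U$ is a Poincar\'e duality group of dimension $d$ (Lazard), and since $U$ is pro-$p$ while $k^\times$ has no nontrivial $p$-torsion the orientation character is trivial over $k$. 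Concretely, the trace isomorphism $H^d(U,k)\xrightarrow{\sim}k$ and the vanishing $H^i(U,-)=0$ for $i>d$ define a morphism $R\Hom_{\Mod(U)}(k,k)\to k[-d]$ in $D(k)$, and cup product against it produces the natural morphism above; it is an isomorphism for finite-dimensional $M$ by the classical finite-coefficient duality, hence for $M=\res_U^G\mathbf{X}_U\cong\bigoplus_{UgU}\ind_{U\cap g^{-1}Ug}^U(k)$ — a direct sum of finite-dimensional $U$-representations by the Mackey decomposition — since all functors involved carry this direct sum to a product, respectively a direct sum, compatibly.

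The genuine mathematical input is nothing beyond this classical Poincar\'e duality for $U$. The real work, and the place where a sign or a side-swap is easiest to lose, is the bookkeeping: (i) formulating the duality in $D(k)$ functorially enough in $M$ that the $H_U^\bullet$-equivariance is visible, and (ii) tracking left/right module structures, opposite algebras, and the shift $[-d]$ through the chain of adjunction isomorphisms — in particular verifying that the $H_U^\bullet$-action coming from the right tensor factor $\mathcal{I}^\bullet$ in the first step is matched, after Frobenius reciprocity, by the $k$-linear dual of the right-regular action appearing in the second step.
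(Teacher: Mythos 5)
Your proposal is correct and is essentially the paper's own argument: replace the left tensor factor $\mathcal{I}^\bullet$ by $\mathbf{X}_U$, use $\mathbf{X}_U\otimes_k\mathcal{I}^\bullet\cong\ind_U^G(\mathcal{I}^\bullet)$ and Frobenius reciprocity to descend to $U$, identify $(H_U^\bullet)^{op}$ with $\Hom^\bullet_{\Mod(U)}(k,\mathcal{I}^\bullet)$, and dualize the composition pairing against a chain-level map $\Hom^\bullet_{\Mod(U)}(k,\mathcal{J}^\bullet)\to k[-d]$ (the paper obtains this by truncating at degree $d$ and choosing a splitting over $k$; this is exactly your bookkeeping item (i), and it, rather than naturality in $M$ alone, is what makes the $H_U^\bullet$-equivariance visible at the level of complexes). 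The only real divergence is the final duality input: the paper simply quotes the duality isomorphism (4) of \cite{SS} to see that the adjoint map is a quasi-isomorphism, whereas you re-derive that duality for $\res_U^G\mathbf{X}_U$ via the Mackey decomposition into finite-dimensional summands together with classical Poincar\'e duality for the torsion-free pro-$p$ group $U$ (trivial mod-$p$ orientation character) and a direct-sum/product compatibility, which is a sound and only slightly more self-contained way to supply the same input.
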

\begin{proof}
The resolution quasi-isomorphisms $k \xrightarrow{\simeq} \mathcal{J}^\bullet$ and $\mathbf{X}_U \xrightarrow{\simeq} \mathcal{I}^\bullet$ induce quasi-isomorphisms
\begin{equation*}
  \Hom_{\Mod(U)}^\bullet(\mathcal{J}^\bullet,\mathcal{J}^\bullet) \xrightarrow{\simeq} \Hom_{\Mod(U)}^\bullet(k,\mathcal{J}^\bullet)
\end{equation*}
and
\begin{equation*}
  \Hom_{\Mod(G)}^\bullet(\mathcal{I}^\bullet \otimes_k \mathcal{I}^\bullet,\mathcal{J}^\bullet) \xrightarrow{\simeq} \Hom_{\Mod(G)}^\bullet(\mathbf{X}_U \otimes_k \mathcal{I}^\bullet,\mathcal{J}^\bullet) \ ,
\end{equation*}
respectively. Furthermore we have actual isomorphisms of complexes $\mathbf{X}_U \otimes_k \mathcal{I}^\bullet \cong \ind_U^G(\mathcal{I}^\bullet)$ and hence
\begin{equation*}
  \Hom_{\Mod(G)}^\bullet(\mathbf{X}_U \otimes_k \mathcal{I}^\bullet,\mathcal{J}^\bullet) \cong \Hom_{\Mod(G)}^\bullet(\ind_U^G(\mathcal{I}^\bullet),\mathcal{J}^\bullet) \cong \Hom_{\Mod(U)}^\bullet(\mathcal{I}^\bullet,\mathcal{J}^\bullet) \ ,
\end{equation*}
the last one by Frobenius reciprocity. Similarly
\begin{equation*}
  (H_U^\bullet)^{op} = \Hom_{\Mod(G)}^\bullet(\mathcal{I}^\bullet,\mathcal{I}^\bullet) \xrightarrow{\simeq} \Hom_{\Mod(G)}^\bullet(\mathbf{X}_U,\mathcal{I}^\bullet)
                         \cong \Hom_{\Mod(U)}^\bullet(k,\mathcal{I}^\bullet) \ .
\end{equation*}
Taking all these together we may define the upper pairing in the diagram
\begin{equation*}
  \xymatrix{
    \Hom_{\Mod(G)}^\bullet(\mathcal{I}^\bullet \otimes_k \mathcal{I}^\bullet,\mathcal{J}^\bullet) \ar[d]_{\simeq} \ar@{}[r]|{\times} & (H_U^\bullet)^{op} = \Hom_{\Mod(G)}(\mathcal{I}^\bullet,\mathcal{I}^\bullet) \ar[d]_{\simeq} \ar[r] & \Hom_{\Mod(U)}^\bullet(k,\mathcal{J}^\bullet) \ar@{=}[d] \\
    \Hom_{\Mod(U)}^\bullet(\mathcal{I}^\bullet,\mathcal{J}^\bullet)  \ar@{}[r]|{\times} & \Hom_{\Mod(U)}^\bullet(k,\mathcal{I}^\bullet) \ar[r] & \Hom_{\Mod(U)}^\bullet(k,\mathcal{J}^\bullet)   }
\end{equation*}
through the lower Yoneda (or composition) pairing. The Yoneda pairing obviously is $(H_U^\bullet)^{op}$-invariant w.r.t.\ the $H_U^\bullet$-action on $\mathcal{I}^\bullet$. Hence the upper pairing is $(H_U^\bullet)^{op}$-invariant w.r.t.\ the right actions on the second factor $\mathcal{I}^\bullet$ and on $H_U^\bullet$, respectively. Recall that $U$ has cohomological dimension $d$ with $H^d(U,k) \cong k$. Denoting by $\tau^{\leq d}$ the truncation functor in degree $d$ we therefore have the quasi-isomorphism of $k$-vector spaces $\tau^{\leq d}(H^0(U,\mathcal{J}^\bullet)) \xrightarrow{\simeq} H^0(U,\mathcal{J}^\bullet) = \Hom_{\Mod(U)}^\bullet(k,\mathcal{J}^\bullet)$. We fix a left inverse quasi-isomorphism $\Hom_{\Mod(U)}^\bullet(k,\mathcal{J}^\bullet) \rightarrow \tau^{\leq d}(H^0(U,\mathcal{J}^\bullet))$ and define the composite map
\begin{equation*}
  \Hom_{\Mod(U)}^\bullet(k,\mathcal{J}^\bullet) \rightarrow \tau^{\leq d}(H^0(U,\mathcal{J}^\bullet)) \xrightarrow{\pr} H^d(U,k)[-d]) \cong k[-d] \ .
\end{equation*}
Composing the above pairings with this map we obtain $(H_U^\bullet)^{op}$-invariant pairings
\begin{equation*}
  \xymatrix{
    \Hom_{\Mod(G)}^\bullet(\mathcal{I}^\bullet \otimes_k \mathcal{I}^\bullet,\mathcal{J}^\bullet) \ar[d]_{\simeq} & \times & (H_U^\bullet)^{op} = \Hom_{\Mod(G)}(\mathcal{I}^\bullet,\mathcal{I}^\bullet) \ar[d]_{\simeq} \ar[r] & k[-d] \ar@{=}[d] \\
    \Hom_{\Mod(U)}^\bullet(\mathcal{I}^\bullet,\mathcal{J}^\bullet)  & \times & \Hom_{\Mod(U)}^\bullet(k,\mathcal{I}^\bullet) \ar[r] & k[-d].   }
\end{equation*}
Applying the usual Hom-tensor adjunction we arrive at a commutative diagram of right $(H_U^\bullet)^{op}$-equivariant complex homomorphisms
\begin{equation*}
  \xymatrix{
    \Hom_{\Mod(G)}^\bullet(\mathcal{I}^\bullet \otimes_k \mathcal{I}^\bullet,\mathcal{J}^\bullet) \ar[d]_{\simeq} \ar[r] &  \Hom_k^\bullet((H_U^\bullet)^{op},k[-d]) \ar@{=}[d] \\
    \Hom_{\Mod(U)}^\bullet(\mathcal{I}^\bullet,\mathcal{J}^\bullet) \ar[r] & \Hom_k^\bullet(\Hom_{\Mod(U)}^\bullet(k,\mathcal{I}^\bullet),k[-d]).   }
\end{equation*}
But the lower horizontal arrow induces on cohomology the duality isomorphism (4) in \cite{SS}. We conclude that both horizontal arrows are quasi-isomorphisms. The top horizontal arrow can, of course, also be read as a left $H_U^\bullet$-equivariant quasi-isomorphism $\Hom_{\Mod(G)}^\bullet(\mathcal{I}^\bullet \otimes_k \mathcal{I}^\bullet,\mathcal{J}^\bullet)  \xrightarrow{\simeq} \Hom_k^\bullet(H_U^\bullet,k[-d])$.
\end{proof}

We point out that the left, resp. right, hand side of the above map in Prop.\ \ref{prop:onesided-bimod} carries an additional left, resp.\ right, $H_I^\bullet$-action. But since the map is only a quasi-isomorphism it is not clear that any of these structures can be transported to the other side. This problem disappears after passing to cohomology. The induced isomorphism on cohomology
\begin{equation*}
  \Ext^*_{\Mod(G)}(\mathbf{X}_U \otimes_k \mathbf{X}_U, k) \xrightarrow{\cong} \Hom_k(\Ext^{d-*}_{\Mod(G)}(\mathbf{X}_U,\mathbf{X}_U),k)
\end{equation*}
comes from the Yoneda pairing
\begin{align}\label{f:Y-pairing}
  \Ext^*_{\Mod(G)}(\mathbf{X}_U \otimes_k \mathbf{X}_U, k) \times \Ext^{d-*}_{\Mod(G)}(\mathbf{X}_U,\mathbf{X}_U) & \longrightarrow \Ext^d_{\Mod(G)}(\mathbf{X}_U,k) = H^d(U,k) \cong k \\
    (f,e)  \quad\ \ \qquad\qquad\qquad & \longmapsto \langle f,e \rangle := f \circ (1 \otimes e) \ .    \nonumber
\end{align}
We also have the Yoneda algebra $E^* := E^*_U := \Ext^*_{\Mod(G)}(\mathbf{X}_U,\mathbf{X}_U) = h^*((H_U^\bullet)^{op})$. Observe that:
\begin{itemize}
  \item[--] $\Ext^*_{\Mod(G)}(\mathbf{X}_U \otimes_k \mathbf{X}_U, k)$ is a right $E^* \otimes_k E^*$-module;
  \item[--] $\Ext^{d-*}_{\Mod(G)}(\mathbf{X}_U,\mathbf{X}_U) = E^{d-*}$ is an $(E^*,E^*)$-bimodule;
  \end{itemize}
It is straightforward to see that the pairing has the property that
\begin{equation}\label{f:equiv1}
  \langle f \cdot (1 \otimes \tau),e \rangle = \langle f,\tau \cdot e \rangle    \qquad\text{for any $\tau \in E^*$},
\end{equation}
which reflects the equivariance property in Prop.\ \ref{prop:onesided-bimod}. In the subsequent subsection we will introduce an anti-involution $\anti \otimes \chi_G$ of the algebra $E^*$ and show in Proposition \ref{linear} that we have
\begin{equation}\label{f:equiv2}
  \langle f \cdot (\tau \otimes 1),e \rangle = (-1)^{\deg(e)\deg(\tau)}\cdot \langle f, e \cdot (\anti \otimes \chi_G)(\tau) \rangle    \qquad\text{for any homogeneous $\tau \in E^*$} .
\end{equation}
For $\tau\in E^0$ this is part of \cite[Prop.~2.7]{SS}.


\subsection{The cohomological anti-involution}\label{subsec:coho-anti}

Until after Lemma \ref{lem:Tr-cores} the subgroup $U \subseteq G$ may be any open subgroup. Let $V_1, V_2$ be two representations in $\Mod(G)$. As usual $\Ind_U^G(-)$ denotes the full smooth induction functor (cf.\ \cite{Vig} \S I.5). We start from the following linear map
\begin{align*}
 \anti : \Hom_{\Mod(U)} (V_1, \Ind_U^G(V_2)) & \longrightarrow \Hom_{\Mod(U)} (V_1, \Ind_U^G(V_2)) \\
                            \alpha & \longmapsto \anti(\alpha)(x)(g) := g^{-1}(\alpha(g^{-1}x)(g^{-1})) \ .
\end{align*}
In order to see that it is well defined fix an $\alpha$ and an $x \in V_1$. Obviously $\anti(\alpha)(x)$ is a map $G \rightarrow V_2$. For $u \in U$ we now verify:
\begin{itemize}
  \item[--] $\anti(\alpha)(x)$ is an induced map since
\begin{align*}
  \anti(\alpha)(x)(gu) & = (gu)^{-1}(\alpha((gu)^{-1}x)((gu)^{-1})) = u^{-1} \big( g^{-1}(\alpha(u^{-1}(g^{-1}x))(u^{-1}g^{-1})) \big) \\
   & = u^{-1} \big( g^{-1}(\alpha(g^{-1}x)(u u^{-1}g^{-1})) \big) = u^{-1} \big( \anti(\alpha)(x)(g) \big) \ .
\end{align*}
  \item[--] $\anti(\alpha)$ is $U$-equivariant since
\begin{align*}
  \anti(\alpha)(ux)(g) & = g^{-1}(\alpha(g^{-1}ux)(g^{-1}))  = g^{-1}(\alpha((u^{-1}g)^{-1}x)(g^{-1})) \\
   & = (u^{-1} g)^{-1} u^{-1}(\alpha((u^{-1}g)^{-1}x)(g^{-1})) = (u^{-1} g)^{-1} (\alpha((u^{-1}g)^{-1}x)(g^{-1} u)) \\
   & = (u^{-1} g)^{-1} (\alpha((u^{-1}g)^{-1}x)((u^{-1} g)^{-1})) = \anti(\alpha)(x)(u^{-1}g) \ .
\end{align*}
In particular $\anti(\alpha)(x)$ is fixed by any open subgroup of $U$ which fixes $x$.
\end{itemize}

\begin{proposition}\label{prop:anti-inv}
  $\anti$ is an involutive linear automorphism of $\Hom_{\Mod(U)} (V_1, \Ind_U^G(V_2))$, which is functorial in $V_1$ and $V_2$.
\end{proposition}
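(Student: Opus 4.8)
The plan is to check the three assertions --- $k$-linearity, involutivity, and functoriality in $V_1$ and $V_2$ --- directly from the defining formula $\anti(\alpha)(x)(g) = g^{-1}(\alpha(g^{-1}x)(g^{-1}))$; that $\anti(\alpha)$ again lies in $\Hom_{\Mod(U)}(V_1,\Ind_U^G(V_2))$ has already been verified just above, so nothing further is needed on well-definedness. Linearity in $\alpha$ is immediate, since the translation $x \mapsto g^{-1}x$, evaluation at $g^{-1}$, and the action $v \mapsto g^{-1}v$ are all $k$-linear.

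The substantive point is involutivity. Given $\alpha$, set $\beta := \anti(\alpha)$ and compute $\anti(\beta)(x)(g) = g^{-1}\big(\beta(g^{-1}x)(g^{-1})\big)$. Evaluating the definition of $\beta$ at the vector $g^{-1}x$ and the group element $g^{-1}$, and using $(g^{-1})^{-1}=g$ together with $g(g^{-1}x)=x$, gives $\beta(g^{-1}x)(g^{-1}) = g\big(\alpha(x)(g)\big)$, whence $\anti(\beta)(x)(g) = g^{-1}g\big(\alpha(x)(g)\big) = \alpha(x)(g)$. Thus $\anti \circ \anti = \id$. In particular $\anti$ is bijective, so, being $k$-linear, it is a linear automorphism of $\Hom_{\Mod(U)}(V_1,\Ind_U^G(V_2))$.

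For functoriality I would unwind the two naturality squares (with respect to $\Mod(G)$, contravariantly in $V_1$ and covariantly in $V_2$). For a morphism $\psi\colon V_2 \to V_2'$ in $\Mod(G)$, recall that $\Ind_U^G(\psi)$ acts on an induced function by postcomposition with $\psi$; the desired equality $\anti(\Ind_U^G(\psi)\circ\alpha) = \Ind_U^G(\psi)\circ\anti(\alpha)$ then reduces, after expansion, to $g^{-1}\big(\psi(\alpha(g^{-1}x)(g^{-1}))\big) = \psi\big(g^{-1}(\alpha(g^{-1}x)(g^{-1}))\big)$, which holds by $G$-equivariance of $\psi$. Similarly, for a morphism $\phi\colon V_1' \to V_1$ in $\Mod(G)$, the identity $\anti(\alpha\circ\phi) = \anti(\alpha)\circ\phi$ comes down to $\alpha(\phi(g^{-1}x))(g^{-1}) = \alpha(g^{-1}\phi(x))(g^{-1})$, again valid by $G$-equivariance of $\phi$.

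I do not expect any real obstacle here; the only thing requiring care is the bookkeeping of the two ``slots'' of an element of $\Hom_{\Mod(U)}(V_1,\Ind_U^G(V_2))$ --- the argument $x\in V_1$ versus the group variable $g$ at which the resulting function in $\Ind_U^G(V_2)$ is evaluated --- and noting that both naturality squares genuinely use that the morphisms are $G$-equivariant, not merely $U$-equivariant.
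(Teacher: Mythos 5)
Your proof is correct and follows essentially the same route as the paper: the involutivity computation $\anti(\anti(\alpha))(x)(g) = g^{-1}\big(g\,\alpha(gg^{-1}x)(g)\big) = \alpha(x)(g)$ is exactly the paper's one-line verification, and the paper simply declares functoriality "clear" where you spell out the two naturality squares. Your added remark that both squares genuinely use $G$-equivariance (not merely $U$-equivariance) of the morphisms is accurate and a worthwhile clarification, but it does not change the argument.
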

\begin{proof}
The functoriality is clear. For $\beta := \anti(\alpha)$ and $\gamma := \anti(\beta)$ we compute
\begin{equation*}
  \gamma(x)(g) = g^{-1} (\beta(g^{-1}x)(g^{-1})) = g^{-1} (g \alpha(g g^{-1} x)(g)) = \alpha(x)(g) \ .
\end{equation*}
\end{proof}

\begin{remark}\label{rem:anti-inv-support}
 For any $h \in G$ let $\ind_U^{UhU}(V_2) \subseteq \Ind_U^G(V_2))$ denote the $U$-invariant subspace of maps supported on $UhU$. The involution $\anti$ maps $\Hom_{\Mod(U)} (V_1, \ind_U^{UhU}(V_2))$ bijectively onto $\Hom_{\Mod(U)} (V_1, \ind_U^{Uh^{-1}U}(V_2))$.
\end{remark}

We now derive from $\anti$ two further involutions.

\subsubsection{The anti-involution on the Yoneda algebra}

\textbf{A)} Here we suppose that $V_1$ is finitely generated in $\Mod(U)$. As a consequence the functor $\Hom_{\Mod(U)}(V_1,-)$ commutes with arbitrary direct sums. Therefore $\anti$ restricts to an involutive automorphism of
\begin{equation*}
  \Hom_{\Mod(U)} (V_1, \ind_U^G(V_2)) = \Hom_{\Mod(G)} (\ind_U^G(V_1), \ind_U^G(V_2)) \ ,
\end{equation*}
again denoted by $\anti$, where the equality comes from Frobenius reciprocity. The most important example is the trivial $G$-representation $V_1 = k$, where $\anti$ restricts to $ \Hom_{\Mod(G)} (\mathbf{X}_U, \ind_U^G(V_2))$.

Recall our injective resolution $\rho : k \xrightarrow{\sim} \mathcal{J}^\bullet$ in $\Mod(G)$ of the trivial representation. As explained in \cite{OS} \S 4.2 it induces to a resolution $\ind(\rho) : \mathbf{X}_U \xrightarrow{\sim} \ind_U^G(\mathcal{J}^\bullet)$ in $\Mod(G)$ by objects which are acyclic for $\Hom_{\Mod(G)}(\mathbf{X}_U,-)$. We also fix an injective resolution $\kappa : \mathbf{X}_U \xrightarrow{\sim} \mathcal{I}^\bullet$ in $\Mod(G)$. We then have a unique (up to homotopy) homomorphism of complexes $\sigma$ in $\Mod(G)$ which makes the diagram
\begin{equation*}
  \xymatrix@R=0.5cm{
                &         \ind_U^G(\mathcal{J}^\bullet) \ar[dd]^{\sigma}     \\
  \mathbf{X}_U \ar[ur]_-{\sim}^-{\ind(\rho)} \ar[dr]^-{\sim}_-{\kappa}                 \\
                &         \mathcal{I}^\bullet                 }
\end{equation*}
commutative. This leads to the diagram of $\Hom$-complexes:
\begin{equation*}
  \xymatrix{
    \Hom_{\Mod(G)}^\bullet(\mathbf{X}_U, \ind_U^G(\mathcal{J}^\bullet))  \ar[r]^-{\sigma_*}   &  \Hom_{\Mod(G)}^\bullet(\mathbf{X}_U, \mathcal{I}^\bullet)   \\
       & \Hom_{\Mod(G)}^\bullet(\mathcal{I}^\bullet, \mathcal{I}^\bullet) = (H_U^\bullet)^{op}  \ar[u]_{\kappa^*}  }
\end{equation*}
By the above observation $\anti$ induces an involutive linear automorphism of complexes $\anti^\bullet$ on $ \Hom_{\Mod(G)}^\bullet(\mathbf{X}_U, \ind_U^G(\mathcal{J}^\bullet))$.

\begin{lemma}\label{lem:quasi-iso}
  Both maps $\kappa^*$ and $\sigma_*$ are quasi-isomorphisms.
\end{lemma}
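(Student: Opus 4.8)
The plan is to show that $\kappa^*$ and $\sigma_*$ each reduce, via standard comparison arguments, to $\Hom$-complexes built from honest resolutions, whose cohomology is $\Ext^*_{\Mod(G)}(\mathbf{X}_U,\mathbf{X}_U)$, and that under this identification each map becomes (up to homotopy) the identity. First I would treat $\kappa^*$. Since $\kappa: \mathbf{X}_U \xrightarrow{\sim} \mathcal{I}^\bullet$ is a resolution and $\mathcal{I}^\bullet$ consists of injective objects, the map $\kappa^*: \Hom_{\Mod(G)}^\bullet(\mathcal{I}^\bullet,\mathcal{I}^\bullet) \to \Hom_{\Mod(G)}^\bullet(\mathbf{X}_U,\mathcal{I}^\bullet)$ is the standard comparison map computing $R\Hom(-,\mathcal{I}^\bullet)$ applied to the quasi-isomorphism $\kappa$; because $\mathcal{I}^\bullet$ is a bounded-below complex of injectives it is homotopically injective, so $\Hom^\bullet_{\Mod(G)}(-,\mathcal{I}^\bullet)$ sends quasi-isomorphisms to quasi-isomorphisms, and $\kappa^*$ is a quasi-isomorphism. (Concretely: $\kappa$ fits into a short exact sequence of complexes with contractible kernel/cokernel after applying $\Hom(-,\mathcal{I}^\bullet)$, or one invokes the general fact that for homotopically injective $J^\bullet$ the functor $\Hom^\bullet(-,J^\bullet)$ is exact on quasi-isomorphisms.)

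Next, for $\sigma_*$, the key input is the sentence in the excerpt that $\ind(\rho): \mathbf{X}_U \xrightarrow{\sim} \ind_U^G(\mathcal{J}^\bullet)$ is a resolution by objects which are \emph{acyclic} for $\Hom_{\Mod(G)}(\mathbf{X}_U,-)$. This is precisely the hypothesis under which one may compute $R\Hom_{\Mod(G)}(\mathbf{X}_U,-)$ using the resolution $\ind_U^G(\mathcal{J}^\bullet)$ in place of an injective resolution. Thus both $\Hom_{\Mod(G)}^\bullet(\mathbf{X}_U, \ind_U^G(\mathcal{J}^\bullet))$ and $\Hom_{\Mod(G)}^\bullet(\mathbf{X}_U,\mathcal{I}^\bullet)$ have cohomology canonically $\Ext^*_{\Mod(G)}(\mathbf{X}_U,\mathbf{X}_U)$, and the map $\sigma_*$ induced by the comparison morphism $\sigma$ between the two resolutions (which exists and is unique up to homotopy exactly because $\mathcal{I}^\bullet$ is injective and $\ind(\rho)$ is a quasi-isomorphism from $\mathbf{X}_U$) induces on cohomology the canonical identification, in particular an isomorphism. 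Hence $\sigma_*$ is a quasi-isomorphism. One should note that $\sigma$ being only well-defined up to homotopy is harmless here, since $\sigma_*$ is then well-defined up to homotopy and we only assert it is a quasi-isomorphism.

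I expect the main obstacle to be bookkeeping rather than anything deep: one must be careful that $\ind_U^G(\mathcal{J}^\bullet)$, while not a complex of injectives in $\Mod(G)$, really does compute $R\Hom_{\Mod(G)}(\mathbf{X}_U,-)$ — this rests on the $\Hom_{\Mod(G)}(\mathbf{X}_U,-)$-acyclicity cited from \cite{OS} \S 4.2 (equivalently, via Frobenius reciprocity $\Hom_{\Mod(G)}(\mathbf{X}_U,\ind_U^G(-)) = \Hom_{\Mod(U)}(k,\ind_U^G(-)) = (\ind_U^G(-))^U$, and the $\mathcal{J}^i$ being injective in $\Mod(G)$ hence $\Gamma^U(\ind_U^G(-))$-acyclic as $\ind_U^G$ is exact and preserves the relevant acyclicity). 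The only subtlety worth flagging explicitly is that we need $\mathbf{X}_U$ to be finitely generated (indeed finitely presented) in $\Mod(G)$ so that $\ind_U^G$ commutes with the filtered colimits implicit in $\Ind_U^G$ and the $\Hom$-acyclicity statement is the correct one; this is exactly condition A) invoked just above the lemma. Once these identifications are in place, both quasi-isomorphism claims follow formally.
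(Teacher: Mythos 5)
Your proposal is correct and follows essentially the same route as the paper: $\kappa^*$ is handled by the standard fact that $\Hom^\bullet_{\Mod(G)}(-,\mathcal{I}^\bullet)$ preserves quasi-isomorphisms since $\mathcal{I}^\bullet$ is a bounded-below complex of injectives (the paper cites Hartshorne, Lemma I.6.2), and $\sigma_*$ is handled via the $\Hom_{\Mod(G)}(\mathbf{X}_U,-)$-acyclicity of the terms of $\ind_U^G(\mathcal{J}^\bullet)$ (the paper cites \cite{DGA}, pp.\ 450--451, for exactly this point). The extra remark about finite generation of $\mathbf{X}_U$ is harmless but not needed for this lemma.
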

\begin{proof}
For $\kappa^*$ use \cite{Har} Lemma I.6.2 and the subsequent p.\ 65. The map $\sigma_*$ is a quasi-isomorphism because $\ind_U^G(\mathcal{J}^\bullet)$ has $ \Hom_{\Mod(G)}^\bullet(\mathbf{X}_U, -)$-acyclic terms; this is explained in detail in \cite{DGA} p.\ 450-451.
\end{proof}

\begin{remark}
Note that the functor $\Ind_U^G$ respects injective objects. Therefore, if $G$ is compact, then  we can take $\mathcal{I}^\bullet := \Ind_U^G(\mathcal{J}^\bullet) = \ind_U^G(\mathcal{J}^\bullet)$ and $\anti$ induces an involutive automorphism of the complex $\Hom_{\Mod(G)}^\bullet(\mathcal{I}^\bullet, \mathcal{I}^\bullet) = (H_U^\bullet)^{op}$. At the dg level, its behavior with respect to the Yoneda product is unclear to us.
\end{remark}

Via the quasi-isomorphisms in Lemma \ref{lem:quasi-iso} the linear automorphism $\anti^\bullet$ induces an involutive graded linear automorphism $\anti^*$  (or simply $\anti$) of the Yoneda algebra
\begin{equation*}
  E^* = \Ext^*_{\Mod(G)}(\mathbf{X}_U,\mathbf{X}_U) = h^*((H_U^\bullet)^{op}) \ .
\end{equation*}

\begin{proposition}\label{prop:coho-anti}
   $\anti^*$ is an anti-involution of the algebra $E_U$.
\end{proposition}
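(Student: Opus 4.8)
Since $\anti^*$ is already an involutive graded $k$-linear automorphism of $E_U=E^*$, it remains only to establish the graded anti-multiplicativity
$$\anti^*(\tau_1\tau_2)=(-1)^{\deg(\tau_1)\deg(\tau_2)}\,\anti^*(\tau_2)\,\anti^*(\tau_1)\qquad\text{for homogeneous }\tau_1,\tau_2\in E^*,$$
the product on $E^*=h^*((H_U^\bullet)^{op})$ being the Yoneda product, realised by composition. The plan is to propagate this identity through the two quasi-isomorphisms of Lemma~\ref{lem:quasi-iso} and reduce it to a bookkeeping computation with the explicit formula for $\anti$.

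The set-up is the crux. The product lives on $(H_U^\bullet)^{op}=\Hom_{\Mod(G)}^\bullet(\mathcal{I}^\bullet,\mathcal{I}^\bullet)$, where it is composition of endomorphisms of the homotopically injective complex $\mathcal{I}^\bullet$; but $\anti^\bullet$ is defined only on $\Hom_{\Mod(G)}^\bullet(\mathbf{X}_U,\ind_U^G(\mathcal{J}^\bullet))=\Hom_{\Mod(U)}^\bullet(k,\Ind_U^G(\mathcal{J}^\bullet))$, since $\anti$ requires the source to be finitely generated over $U$ — true for $k$, false for the injective terms $\mathcal{J}^i$. Lemma~\ref{lem:quasi-iso} bridges the two through $\Hom_{\Mod(G)}^\bullet(\mathbf{X}_U,\mathcal{I}^\bullet)$ via $\kappa^*$, $\sigma_*$ and the relation $\sigma\circ\ind(\rho)=\kappa$. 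So, given cocycles $f,g$ in $\Hom_{\Mod(G)}^\bullet(\mathcal{I}^\bullet,\mathcal{I}^\bullet)$ representing $\tau_1,\tau_2$, I would pass to the cocycles $f\circ\kappa$, $g\circ\kappa$ and, using that $\ind_U^G(\mathcal{J}^\bullet)$ has $\Hom_{\Mod(G)}(\mathbf{X}_U,-)$-acyclic terms while $\mathcal{I}^\bullet$ is homotopically injective, lift them up to homotopy to cocycles $\alpha,\beta$ in $\Hom_{\Mod(G)}^\bullet(\mathbf{X}_U,\ind_U^G(\mathcal{J}^\bullet))$ with $\sigma\circ\alpha\simeq f\circ\kappa$ and $\sigma\circ\beta\simeq g\circ\kappa$; by construction $\anti^*(\tau_1),\anti^*(\tau_2)$ are the classes of $\anti^\bullet(\alpha),\anti^\bullet(\beta)$ carried back along $\sigma_*$ and $(\kappa^*)^{-1}$ to endomorphisms $\anti(f),\anti(g)$ of $\mathcal{I}^\bullet$.

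The heart of the argument is then to produce a cocycle $\gamma$ in $\Hom_{\Mod(G)}^\bullet(\mathbf{X}_U,\ind_U^G(\mathcal{J}^\bullet))$ representing the product $\tau_1\tau_2$, and to show that $\anti^\bullet(\gamma)$ corresponds, back on the $\mathcal{I}^\bullet$-side and up to homotopy, to $(-1)^{\deg(\tau_1)\deg(\tau_2)}\,\anti(g)\circ\anti(f)$. This is where the formula $\anti(\alpha)(x)(g)=g^{-1}\bigl(\alpha(g^{-1}x)(g^{-1})\bigr)$ and Frobenius reciprocity do the work: unwound, they say that $\anti$ is the derived incarnation of the involution of $\mathbf{X}_U=\ind_U^G(k)$ induced by $gU\mapsto Ug^{-1}$ on $G/U$ — whose effect on morphisms is the support reversal of Remark~\ref{rem:anti-inv-support} — and such an involution turns composition into the opposite composition exactly by the identity $(g_1g_2)^{-1}=g_2^{-1}g_1^{-1}$. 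In degree zero this recovers the classical anti-involution $e_{UgU}\mapsto e_{Ug^{-1}U}$ of the Hecke algebra $H_U=k[U\backslash G/U]$ (the case $\tau_1,\tau_2\in E^0$, which is \cite[Prop.~2.7]{SS}); the general statement is its cochain-level counterpart, obtained by applying $\anti$ componentwise and chasing the formula.

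The main obstacle I expect is that, as the Remark preceding the statement stresses, $\anti$ is not a DG anti-involution of $(H_U^\bullet)^{op}$, even when $G$ is compact: the reversal of composition is strict only in degree zero, and in higher degrees it is mediated by the differentials of $\mathcal{I}^\bullet$ and $\ind_U^G(\mathcal{J}^\bullet)$ and by the homotopies hidden in the choices of $\alpha$ and $\beta$. Hence the identity $\anti^\bullet(\gamma)\simeq(-1)^{\deg(\tau_1)\deg(\tau_2)}\,\anti(g)\circ\anti(f)$ has to be established together with explicit chain homotopies, and one must check both that $\alpha$ and $\beta$ can be chosen compatibly with the cocycle representing $\tau_1\tau_2$ and that the sign incurred when the two cocycles are transposed is precisely $(-1)^{\deg(\tau_1)\deg(\tau_2)}$. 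Passing to cohomology finally discards the homotopies and leaves the asserted anti-involution of $E_U$.
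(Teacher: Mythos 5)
Your reduction of the statement to graded anti-multiplicativity, and the transport of the problem along the quasi-isomorphisms of Lemma \ref{lem:quasi-iso}, is a correct framing; but the proof stops exactly where the content of the proposition begins. The entire burden is the step you describe as ``the heart of the argument'': producing a cocycle $\gamma$ representing $\tau_1\tau_2$ and exhibiting a chain homotopy between $\anti^\bullet(\gamma)$ (transported back to $\Hom_{\Mod(G)}^\bullet(\mathcal{I}^\bullet,\mathcal{I}^\bullet)$) and $(-1)^{\deg\tau_1\deg\tau_2}\,\anti(g)\circ\anti(f)$. You never construct this homotopy, nor the compatible lifts $\alpha,\beta$; you only assert that the explicit formula for $\anti$ and Frobenius reciprocity ``do the work,'' and you yourself flag the missing homotopies as the expected obstacle without overcoming it. The heuristic offered in support is also not sound as stated: $\anti$ is not induced by any $G$-equivariant endomorphism of $\mathbf{X}_U$ (no such map realizes $gU\mapsto Ug^{-1}$; the anti-involution of $H_U$ is not of that form), so the identity $(g_1g_2)^{-1}=g_2^{-1}g_1^{-1}$ does not by itself yield the required comparison of composites at the cochain level. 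Note that the paper's remark preceding the proposition says explicitly that even for compact $G$, where $\anti$ does act on $(H_U^\bullet)^{op}$ itself, its behavior with respect to the dg Yoneda product is unclear to the authors --- i.e.\ the direct dg/homotopy route you propose is precisely the one the paper does not know how to carry out.

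The paper's actual proof takes a different and essentially cohomological route: it shows, via the Shapiro isomorphisms $\Sh_h$ on each double coset $UhU$, that the $\anti$ defined here coincides with the anti-involution of \cite{OS}, and then invokes \cite{OS} Prop.\ 6.1, whose proof (resting on the explicit description of the Yoneda product on $E^*$ through the decomposition over $U\backslash G/U$, cup products, conjugation and corestriction, cf.\ \cite{OS} Prop.\ 5.3) works verbatim for a general pair $(G,U)$. To repair your argument you would either have to genuinely produce the chain homotopies you postulate --- which is not known to be possible --- or replace the cochain-level computation by a product formula on cohomology in the spirit of \cite{OS}, which is what the paper does.
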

\begin{proof}
This is \cite{OS} Prop.\ 6.1. Although that proof is written for a pro-$p$ Iwahori subgroup $U$ it works verbatim in our generality. To see that our $\anti$ coincides with the one defined in
\cite{OS} we introduce the Shapiro map
\begin{align*}
 \Sh_h:  \Hom_{\Mod(U)} (V_1, \ind_U^{UhU}(V_2))   & \longrightarrow  \Hom_{\Mod(U_h)} (V_1,V_2^h) \\
                \alpha & \longmapsto \Sh_h(\alpha)(x):=\alpha(x)(h) \ .
\end{align*}
Here $U_h=U \cap hUh^{-1}$ and $V_2^h$ denotes the representation $hUh^{-1}\overset{\sim}{\longrightarrow} U \longrightarrow \text{Aut}_k(V_2)$.
We can express $\anti$ in terms of Shapiro maps as follows. There is a commutative diagram of isomorphisms
\begin{equation*}
  \xymatrix{
     \Hom_{\Mod(U)} (V_1, \ind_U^{UhU}(V_2)) \ar[d]_{\Sh_h}^{\simeq} \ar[r]^{\anti} &  \Hom_{\Mod(U)} (V_1, \ind_U^{Uh^{-1}U}(V_2)) \ar[d]_{\simeq}^{\Sh_{h^{-1}}} \\
    \Hom_{\Mod(U_h)} (V_1,V_2^h) \ar[r]^{h_*} & \Hom_{\Mod(U_{h^{-1}})} (V_1,V_2^{h^{-1}}).   }
\end{equation*}
Here $(h_*\beta)(x)=h\beta(hx)$. Why? We start by pointing out why $\Sh_h$ is an isomorphism. The Shapiro map is induced by
$$
 \ind_U^{UhU}(V_2) \overset{\sim}{\longrightarrow} \ind_{U_h}^U(V_2^h) \longrightarrow V_2^h.
$$
The first map takes $f$ to the function $\phi_f(u)=f(uh)$, the second $\phi \mapsto \phi(e)$ is evaluation at the identity. Now use Frobenius reciprocity. Checking the diagram commutes is an easy computation. Indeed, letting $\beta=\Sh_h(\alpha)$ we have on the one hand
$$
(h_*\beta)(x)=h\beta(hx)=h(\alpha(hx)(h)).
$$
On the other hand, by definition of $\anti$ we have
$$
\Sh_{h^{-1}}(\anti(\alpha))(x)=\anti(\alpha)(x)(h^{-1})=h(\alpha(hx)(h))
$$
as desired. The previous commutative diagram defines the anti-involution in \cite{OS}.
\end{proof}


\subsubsection{The transpose of $\anti$}

\textbf{B)} To obtain the second involution we use, for general $V_1$ and $V_2$, the two Frobenius isomorphisms (cf.\ \cite{Vig} I.5.7)
\begin{equation*}
  \Hom_{\Mod(U)}(V_2, \Ind_U^G(V_1)) \cong \Hom_{\Mod(G)}(\ind_U^G(V_2), \Ind_U^G(V_1)) \cong \Hom_{\Mod(U)}(\ind_U^G(V_2), V_1) \ .
\end{equation*}
In order to give the explicit formula for the composite we first need to introduce, for any $h \in G$ and any $v \in V_2$, the unique function $\chara_{h,U}^v \in \ind_U^G(V_2)$ which is supported on $hU$ and has the value $v$ in $h$. The group $G$ acts on these functions as follows:
\begin{equation}\label{f:g-char}
  {^g \chara}_{h,U}^v(y) = \chara_{h,U}^v(g^{-1} y) =
  \begin{cases}
  v & \text{if $y = gh$}, \\
  0 & \text{if $y \not\in ghU$}
  \end{cases}  = \chara_{gh,U}^v (y)  \qquad\text{for any $g, y \in G$}.
\end{equation}
It is straightforward to check that the composite isomorphism
\begin{equation*}
  \rec : \Hom_{\Mod(U)}(V_2, \Ind_U^G(V_1))  \xrightarrow{\ \cong\ } \Hom_{\Mod(U)}(\ind_U^G(V_2), V_1)
\end{equation*}
is given by the formula
\begin{equation*}
  \rec(\alpha)(\phi) = \sum_{g \in G/U} \alpha(\phi(g))(g^{-1})   \qquad\text{for $\phi \in \ind_U^G(V_2)$}.
\end{equation*}
The inverse satisfies
\begin{equation*}
  \rec^{-1}(\beta)(v)(g) = \beta(\chara_{g^{-1},U}^v)    \qquad\text{for $v \in V_2$ and $g \in G$}.
\end{equation*}

We now define our second functorial involution $\anti'$ through the commutativity of the diagram
\begin{equation*}
  \xymatrix{
    \Hom_{\Mod(U)}(V_2, \Ind_U^G(V_1)) \ar[d]_{\anti}  &  \Hom_{\Mod(U)}(\ind_U^G(V_2), V_1) \ar[d]^{\anti'} \ar[l]_-{\rec^{-1}} \\
    \Hom_{\Mod(U)}(V_2, \Ind_U^G(V_1)) \ar[r]^{\rec} &  \Hom_{\Mod(U)}(\ind_U^G(V_2), V_1) .  }
\end{equation*}
Using the explicit formulas for $\anti$, $\rec$, and $\rec^{-1}$ one easily computes the explicit formula
\begin{align*}
 \anti' : \Hom_{\Mod(U)} (\ind_U^G(V_2), V_1) & \longrightarrow \Hom_{\Mod(U)} (\ind_U^G(V_2),V_1) \\
                            \lambda & \longmapsto \anti'(\lambda)(\phi) := \sum_{g \in G/U} g(\lambda(\chara_{g^{-1},U}^{g\phi(g)})) \ .
\end{align*}

Next we specialize to the case $V_2 = k$ (and $V_1 = V$): Then the definition of $\anti'$ simplifies to
\begin{equation*}
  \anti'(\lambda)(\chara_{gU}) = g (\lambda(\chara_{g^{-1} U})) \ .
\end{equation*}
Here $\chara_Y$, for any open subset $Y \subseteq G$, denotes the characteristic function of $Y$.

Let $\varsigma$ denote the endomorphism of $\mathbf{X}_U \otimes_k \mathbf{X}_U$ which swaps the factors.

\begin{lemma}\label{lem:anti'-swap}
   The diagram
\begin{equation*}
  \xymatrix{
    \Hom_{\Mod(U)}(\mathbf{X}_U,V) \ar[d]_{\cong} \ar[r]^{\anti'} & \Hom_{\Mod(U)}(\mathbf{X}_U,V) \ar[d]^{\cong} \\
    \Hom_{\Mod(G)}(\ind_U^G(\mathbf{X}_U),V) \ar[d]_{\cong}  & \Hom_{\Mod(G)}(\ind_U^G(\mathbf{X}_U),V) \ar[d]^{\cong} \\
    \Hom_{\Mod(G)}(\mathbf{X}_U \otimes_k \mathbf{X}_U,V) \ar[r]^{\varsigma^*} & \Hom_{\Mod(G)}(\mathbf{X}_U \otimes_k \mathbf{X}_U,V)   }
\end{equation*}
   is commutative.
\end{lemma}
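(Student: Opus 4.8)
The plan is to spell out all four identifications in the diagram and then to check commutativity on the $k$-basis $\{\chara_{gU}\}_{g\in G/U}$ of $\mathbf{X}_U$; since every arrow in sight is $k$-linear, this suffices. Write $\mathbf{X}_U=\ind_U^G(k)$. The upper vertical isomorphism is Frobenius reciprocity for the left adjoint $\ind_U^G$, under which $\lambda\in\Hom_{\Mod(U)}(\mathbf{X}_U,V)$ corresponds to $F_\lambda\in\Hom_{\Mod(G)}(\ind_U^G(\mathbf{X}_U),V)$ with $F_\lambda(\phi)=\sum_{g\in G/U} g\,\lambda(\phi(g))$ (well defined by the $U$-equivariance of $\lambda$; the inverse is $F\mapsto(w\mapsto F(\chara_{e,U}^w))$). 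The lower vertical isomorphism is induced by the projection-formula isomorphism $\Phi\colon\mathbf{X}_U\otimes_k\mathbf{X}_U\xrightarrow{\ \cong\ }\ind_U^G(\mathbf{X}_U)$, $\Phi(f_1\otimes f_2)(g)=f_1(g)\cdot g^{-1}f_2$; on basis vectors $\Phi(\chara_{aU}\otimes\chara_{bU})=\chara_{a,U}^{\chara_{a^{-1}bU}}$, the induced function supported on $aU$ with value $\chara_{a^{-1}bU}$ at $a$. The right-hand column is given by these same two isomorphisms.

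Next I would run an arbitrary $\lambda$ around the diagram both ways. Going down the left column, the image of $\lambda$ is $\mu_\lambda=F_\lambda\circ\Phi\in\Hom_{\Mod(G)}(\mathbf{X}_U\otimes_k\mathbf{X}_U,V)$, and only the summand $g=a$ of $F_\lambda$ contributes, giving $\mu_\lambda(\chara_{aU}\otimes\chara_{bU})=a\,\lambda(\chara_{a^{-1}bU})$; applying $\varsigma^*$ then yields $(\varsigma^*\mu_\lambda)(\chara_{aU}\otimes\chara_{bU})=\mu_\lambda(\chara_{bU}\otimes\chara_{aU})=b\,\lambda(\chara_{b^{-1}aU})$. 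Going across the top first, the simplified formula $\anti'(\lambda)(\chara_{gU})=g\,\lambda(\chara_{g^{-1}U})$ gives, after applying $F_{(-)}\circ\Phi$ and evaluating on the same basis vector, $a\cdot\anti'(\lambda)(\chara_{a^{-1}bU})=a\cdot(a^{-1}b)\,\lambda(\chara_{b^{-1}aU})=b\,\lambda(\chara_{b^{-1}aU})$. The two outputs agree on every basis vector, so the diagram commutes.

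The only genuine difficulty is bookkeeping: fixing the correct form of Frobenius reciprocity for $\ind_U^G$ (it is the left adjoint of restriction, $\Hom_{\Mod(G)}(\ind_U^G W,V)\cong\Hom_{\Mod(U)}(W,V)$) and of $\Phi$, together with their various $U$-equivariance twists, and being sure these are precisely the ``obvious'' identifications meant in the statement. Once the conventions are pinned down the verification is the short computation above. Alternatively one could transport $\varsigma$ through $\Phi$ to an explicit $G$-automorphism of $\ind_U^G(\mathbf{X}_U)$ and match it against $\anti'$ via Frobenius reciprocity, but the element-level check is the most economical route.
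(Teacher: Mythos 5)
Your proposal is correct and follows essentially the same route as the paper: both unwind the Frobenius reciprocity and projection-formula identifications explicitly and verify commutativity by an element-level chase on the basis vectors $\chara_{aU}\otimes\chara_{bU}$ (the paper runs the chase from the bottom corner upward and finishes with the $G$-equivariance identity $g(\Lambda(\chara_U\otimes\chara_{g^{-1}U}))=\Lambda(\chara_{gU}\otimes\chara_U)$, which your explicit formulas for $F_\lambda$ and $\Phi$ encode implicitly). Your identifications of the vertical isomorphisms agree with the ones the paper intends, so the computation goes through.
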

\begin{proof}
We first recall that the inverses of the isomorphisms in the columns are induced by the maps
\begin{equation*}
  \begin{aligned}
    \mathbf{X}_U & \longrightarrow \ind_U^G(\mathbf{X}_U) \\
    \chara_{gU} & \longmapsto \chara_{1,U}^{\chara_{gU}}
  \end{aligned}
  \qquad\text{and}\qquad
  \begin{aligned}
    \ind_U^G(\mathbf{X}_U) & \longrightarrow \mathbf{X}_U \otimes_k \mathbf{X}_U \\
     \phi & \longmapsto \sum_{h \in G/U} \chara_{hU} \otimes\, h\phi(h) \ .
  \end{aligned}
\end{equation*}
The composed map simply is
\begin{align*}
  \mathbf{X}_U & \longrightarrow \mathbf{X}_U \otimes_k \mathbf{X}_U \\
    \chara_{gU} & \longmapsto \chara_{U} \otimes \chara_{gU} \ .
\end{align*}
Let now $\Lambda \in \Hom_{\Mod(G)}(\mathbf{X}_U \otimes_k \mathbf{X}_U,V)$ be an element in the lower left corner of the diagram. Going through the upper left corner it is mapped to
\begin{equation*}
  \Lambda \longmapsto [\chara_{gU} \mapsto \Lambda(\chara_U \otimes \chara_{gU})] \longmapsto [\chara_{gU} \mapsto  g(\Lambda(\chara_U \otimes \chara_{g^{-1}U}))] \ .
\end{equation*}
On the other hand going through the lower right corner $\Lambda$ is mapped to
\begin{equation*}
  \Lambda \longmapsto [\chara_{hU} \otimes \chara_{gU} \mapsto \Lambda(\chara_{gU} \otimes \chara_{hU})] \longmapsto [\chara_{gU} \mapsto  \Lambda(\chara_{gU} \otimes \chara_U)] \ .
\end{equation*}
But since $\Lambda$ is $G$-equivariant we have $g(\Lambda(\chara_U \otimes \chara_{g^{-1}U})) = \Lambda(\chara_{gU} \otimes \chara_U)$.
\end{proof}


We now let $V_1$, $V_2$, and $V_3$ be three representations in $\Mod(G)$, and we consider the composition pairing
\begin{equation*}
  \Hom_{\Mod(U)}(\ind_U^G(V_2), V_3) \   \times \   \Hom_{\Mod(U)}(V_1,\ind_U^G(V_2))  \xrightarrow{\; \circ \;} \Hom_{\Mod(U)}(V_1,V_3) \ .
\end{equation*}
In order to understand the behavior of the two involutions $\anti$ and $\anti'$ with respect to this pairing we first have to refine the pairing. For this we start with the pairing
\begin{align*}
  \ind_U^G(V_1) \times \Hom_{\Mod(U)}(V_1, \ind_U^G(V_2)) & \longrightarrow \ind_U^G(V_2)   \\
                                            (\phi,D)      & \longmapsto \langle \phi,D \rangle(g) := D(g \phi(g))(g) \ .
\end{align*}
It is well defined since:
\begin{itemize}
  \item[--] If $\phi(g) = 0$ then $\langle \phi,D \rangle(g) = 0$.
  \item[--] $\langle \phi,D \rangle(gu) = D(g u \phi(gu))(gu) = D(g \phi(g))(gu) = u^{-1}(D(g \phi(g))(g)) = u^{-1} (\langle \phi, D \rangle(g))$ for any $u \in U$.
\end{itemize}
Moreover, for any $u \in U$, we have
\begin{align*}
  \langle{^u \phi},D \rangle(g) & = D(g ({^u \phi})(g))(g) = D(g \phi(u^{-1}g))(g) = D(u u^{-1} g \phi(u^{-1}g))(u u^{-1} g)  \\
    & = D(u^{-1} g \phi(u^{-1}g))(u^{-1} g) = \langle \phi,D \rangle (u^{-1} g) = ({^u \langle} \phi,D \rangle)(g) \ .
\end{align*}
This means that, for any fixed $D \in \Hom_{\Mod(U)}(V_1, \ind_U^G(V_2))$, the map
\begin{align*}
  \ind_U^G(V_1) & \longrightarrow \ind_U^G(V_2) \\
           \phi & \longmapsto \langle \phi,D \rangle
\end{align*}
is $U$-equivariant. As a consequence we obtain the refined pairing
\begin{align}\label{f:refinedpairing}
  \Hom_{\Mod(U)}(\ind_U^G(V_2),V_3) \times  \Hom_{\Mod(U)}(V_1, \ind_U^G(V_2)) & \longrightarrow \Hom_{\Mod(U)}(\ind_U^G(V_1),V_3) \\
                       (C, D) & \longmapsto  \langle C, D \rangle (\phi) := C(\langle \phi,D \rangle) \ .   \nonumber
\end{align}

Next we define a trace map. Here the idea is to start from the map
\begin{align*}
  V_1 & \longrightarrow  \ind_U^G(V_1) \\
    v & \longmapsto [g \mapsto g^{-1} v] = \sum_{g \in G/U} \chara_{g,U}^{g^{-1} v} \ ,
\end{align*}
but which obviously is not well defined unless $G$ is compact. In a formal sense it is $U$-equivariant, though. To circumvent this problem we introduce we proceed as follows. For any $U$-bi-invariant subset $X \subseteq G$ we have the $U$-invariant subspaces $\ind_U^X(V_1) \subseteq \ind_U^G(V_1)$ of those functions which are supported on $X$. Clearly $\ind_U^G(V_1) = \ind_U^X(V_1) \oplus \ind_U^{G \setminus X}(V_1)$. We now introduce the vector subspace
\begin{align*}
  \Hom_{\Mod(U)}^{fin}(\ind_U^G(V_1),V_3) := &\ \{ F \in \Hom_{\Mod(U)}(\ind_U^G(V_1),V_3) : F | \ind_U^{G \setminus X}(V_1) = 0 \\
                        & \qquad\text{for some compact $U$-bi-invariant subset $X \subseteq G$}\}.
\end{align*}
Our trace map now is defined to be the map
\begin{align*}
  \Tr : \Hom_{\Mod(U)}^{fin}(\ind_U^G(V_1),V_3) & \longrightarrow \Hom_{\Mod(U)}(V_1,V_3)   \\
                F & \longmapsto \Tr(F)(v) := \sum_{g \in G/U} F(\chara_{g,U}^{g^{-1} v}) \ .
\end{align*}
By the condition imposed on $F$ the above defining sums are finite.

\begin{lemma}\label{lem:fin}
   If $V_1$ is finitely generated as a $U$-representation then the image of the pairing \eqref{f:refinedpairing} lies in $\Hom_{\Mod(U)}^{fin}(\ind_U^G(V_1),V_3)$.
\end{lemma}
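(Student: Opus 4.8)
The plan is to produce a single compact $U$-bi-invariant subset $X \subseteq G$, depending only on $D$, such that $\langle \phi, D\rangle = 0$ for every $\phi \in \ind_U^G(V_1)$ supported on $G \setminus X$. Granting this, $\langle C,D\rangle(\phi) = C(\langle\phi,D\rangle) = 0$ for all such $\phi$ and all $C$, so $\langle C,D\rangle$ lies in $\Hom_{\Mod(U)}^{fin}(\ind_U^G(V_1),V_3)$ (with a bound that is in fact uniform in $C$).

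To build $X$, choose generators $v_1,\dots,v_n$ of $V_1$ as a $U$-representation, so $V_1 = \sum_{i=1}^n k[U]v_i$. Each $D(v_i)$ lies in the \emph{compact} induction $\ind_U^G(V_2)$, hence is supported on a finite union of right cosets $gU$; in particular $\supp(D(v_i))$ is compact and right $U$-invariant. Set $X := \bigcup_{i=1}^n U\cdot\supp(D(v_i))$. Since $U$ is compact open and each $\supp(D(v_i))$ is compact and right $U$-invariant, $X$ is a compact $U$-bi-invariant subset of $G$. Because $D$ is $U$-equivariant, $D(V_1) = \sum_i k[U]D(v_i)$, and left translation by $u \in U$ moves a function supported on $\supp(D(v_i))$ to one supported on $u\cdot\supp(D(v_i)) \subseteq X$; hence $D(V_1) \subseteq \ind_U^X(V_2)$.

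It remains to verify the vanishing. For $\phi \in \ind_U^G(V_1)$ with $\supp(\phi) \subseteq G \setminus X$, recall $\langle\phi,D\rangle(g) = D(g\phi(g))(g)$. If $g \in X$ then $\phi(g) = 0$ and $D(g\phi(g)) = D(0) = 0$; if $g \notin X$ then $D(g\phi(g)) \in D(V_1) \subseteq \ind_U^X(V_2)$ vanishes at $g$. Either way $\langle\phi,D\rangle(g) = 0$, so $\langle\phi,D\rangle = 0$ and we are done. There is no real obstacle here; the one point to get right is that one should not bound $\supp(\langle\phi,D\rangle)$ by $\supp(\phi)$ (which is true but useless, since $\supp(\phi)$ may be arbitrarily spread out) but rather by the fixed compact set $X$, which exists precisely because finite generation of $V_1$ yields a single compact set containing $\supp(D(v))$ for \emph{all} $v \in V_1$.
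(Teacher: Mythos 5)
Your proof is correct and follows essentially the same route as the paper: finite generation of $V_1$ together with the $U$-equivariance of $D$ yields one compact $U$-bi-invariant set (your $X$, the paper's $Y$) containing the support of every $D(v)$, and then $\langle\phi,D\rangle=0$ whenever $\phi$ is supported off that set, so $\langle C,D\rangle$ vanishes there. The only difference is cosmetic: you construct the compact set explicitly from generators and check the vanishing pointwise, whereas the paper invokes its existence directly and argues via the two support bounds $\langle\phi,D\rangle\in\ind_U^X(V_2)\cap\ind_U^Y(V_2)$ with $X\cap Y=\emptyset$.
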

\begin{proof}
Let $C \in \Hom_{\Mod(U)}(\ind_U^G(V_2),V_3)$, $D \in \Hom_{\Mod(U)}(V_1, \ind_U^G(V_2))$, and $\phi \in \ind_U^X(V_1)$ for some compact $U$-bi-invariant subset $X \subseteq G$. We noted already the obvious that then $\langle \phi, D \rangle \in \ind_U^X(V_2)$. On the other hand our assumption on $V_1$ implies that there is another compact $U$-bi-invariant subset $Y \subseteq G$ such that $D \in \Hom_{\Mod(U)}(V_1, \ind_U^Y(V_2))$. It follows that $\langle \phi, D \rangle \in \ind_U^Y(V_2)$. We conclude that, if $X \subseteq G \setminus Y$, then $\langle \phi, D \rangle = 0$ and a fortiori $\langle C, D \rangle (\phi) = 0$.
\end{proof}

\begin{proposition}\label{prop:refined-trace}
   If $V_1$ is finitely generated as a $U$-representation then the diagram
\begin{equation*}
  \xymatrix@R=0.5cm{
                &    \Hom_{\Mod(U)}(V_1,V_3)      \\
   \Hom_{\Mod(U)}(\ind_U^G(V_2),V_3) \times  \Hom_{\Mod(U)}(V_1, \ind_U^G(V_2)) \ar[ur]^{\circ} \ar[dr]_{\langle\ ,\ \rangle}                 \\
                &    \Hom_{\Mod(U)}^{fin}(\ind_U^G(V_1),V_3)    \ar[uu]_{\Tr}             }
\end{equation*}
   is commutative.
\end{proposition}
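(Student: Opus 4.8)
The plan is to check commutativity pointwise. Fix $C \in \Hom_{\Mod(U)}(\ind_U^G(V_2),V_3)$ and $D \in \Hom_{\Mod(U)}(V_1,\ind_U^G(V_2))$; by Lemma \ref{lem:fin} the element $\langle C,D\rangle$ lies in $\Hom_{\Mod(U)}^{fin}(\ind_U^G(V_1),V_3)$, so $\Tr(\langle C,D\rangle)$ is defined, and I want to show $\Tr(\langle C,D\rangle) = C\circ D$ in $\Hom_{\Mod(U)}(V_1,V_3)$. Unwinding the definition of $\Tr$ and of the refined pairing \eqref{f:refinedpairing}, for $v\in V_1$ the left-hand side is
\[
  \Tr(\langle C,D\rangle)(v) = \sum_{g\in G/U} \langle C,D\rangle(\chara_{g,U}^{g^{-1}v}) = \sum_{g\in G/U} C\bigl(\langle \chara_{g,U}^{g^{-1}v},D\rangle\bigr).
\]

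The key step is to identify the function $\langle \chara_{g,U}^{g^{-1}v},D\rangle \in \ind_U^G(V_2)$ using the formula $\langle\phi,D\rangle(y) = D(y\phi(y))(y)$. If $y\notin gU$ then $\chara_{g,U}^{g^{-1}v}(y)=0$ and the value is $0$; if $y = gu$ with $u\in U$ then $\chara_{g,U}^{g^{-1}v}(gu) = u^{-1}(g^{-1}v)$, so $y\cdot\chara_{g,U}^{g^{-1}v}(y) = gu\cdot u^{-1}g^{-1}v = v$ and the value is $D(v)(gu)$. Hence $\langle \chara_{g,U}^{g^{-1}v},D\rangle$ is precisely the component of $D(v)$ supported on the coset $gU$ under the decomposition $\ind_U^G(V_2) = \bigoplus_{gU\in G/U}\ind_U^{gU}(V_2)$. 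In particular this summand depends only on the coset $gU$ (one checks $\chara_{gu,U}^{(gu)^{-1}v} = \chara_{g,U}^{g^{-1}v}$ directly), so the sum over $G/U$ is unambiguous, and it vanishes for all but finitely many cosets because $D(v)$ is compactly supported modulo $U$.

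Summing and using linearity of $C$ then gives
\[
  \sum_{g\in G/U} C\bigl(\langle \chara_{g,U}^{g^{-1}v},D\rangle\bigr) = C\Bigl(\sum_{g\in G/U} (D(v))|_{gU}\Bigr) = C(D(v)) = (C\circ D)(v),
\]
which is the claimed equality. The hypothesis that $V_1$ is finitely generated over $U$ enters only through Lemma \ref{lem:fin}, to put $\langle C,D\rangle$ in the domain of $\Tr$; the pointwise identity itself needs no further input. I do not expect a genuine obstacle here: the only points requiring care are the coset-independence of the summand and the bookkeeping of the support decomposition of $D(v)$, both routine once the formulas are unwound.
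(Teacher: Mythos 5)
Your proposal is correct and follows essentially the same route as the paper: both evaluate $\Tr(\langle C,D\rangle)$ at $v\in V_1$, identify $\langle \chara_{g,U}^{g^{-1}v},D\rangle$ as the component $\chara_{g,U}^{D(v)(g)}$ of $D(v)$ supported on $gU$, and then sum and apply linearity of $C$ to recover $C(D(v))$. No gaps; the finiteness and coset-independence points you flag are handled the same way in the paper.
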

\begin{proof}
For $C \in \Hom_{\Mod(U)}(\ind_U^G(V_2),V_3)$, $D \in \Hom_{\Mod(U)}(V_1, \ind_U^G(V_2))$ and $v \in V_1$ we compute
\begin{align*}
  \Tr(\langle C , D \rangle)(v)  & = \sum_{g \in G/U} \langle C , D \rangle (\chara_{g,U}^{g^{-1} v}) = \sum_{g \in G/U} C (\langle \chara_{g,U}^{g^{-1} v}, D \rangle) \\
      & = \sum_{g \in G/U} C \big( \sum_{h \in G/U} \chara_{h,U}^{D(h \chara_{g,U}^{g^{-1} v}(h))(h)}  \big)  \\
      & = \sum_{g \in G/U} C \big( \chara_{g,U}^{D(g \chara_{g,U}^{g^{-1} v}(g))(g)}  \big) \\
      & = \sum_{g \in G/U} C \big( \chara_{g,U}^{D(v)(g)} \big) =  C \big( \sum_{g \in G/U} \chara_{g,U}^{D(v)(g)} \big)  \\
      & = C(D(v)) \ .
\end{align*}
\end{proof}

\begin{proposition}\label{prop:J'-J}
   If $V_1$ is finitely generated as a $U$-representation then the diagram of pairings
\begin{equation*}
  \xymatrix{
    \Hom_{\Mod(U)}(\ind_U^G(V_2),V_3) \ar@{}[r]|{\times}  & \Hom_{\Mod(U)}(V_1, \ind_U^G(V_2)) \ar[d]_{\anti} \ar[r]^{\langle\ ,\  \rangle} & \Hom_{\Mod(U)}(\ind_U^G(V_1),V_3)  \\
    \Hom_{\Mod(U)}(\ind_U^G(V_2),V_3) \ar[u]_{\anti'} \ar@{}[r]|{\times} & \Hom_{\Mod(U)}(V_1, \ind_U^G(V_2)) \ar[r]^{\langle\ ,\  \rangle} & \Hom_{\Mod(U)}(\ind_U^G(V_1),V_3) \ar[u]^{\anti'}  }
\end{equation*}
   is commutative.
\end{proposition}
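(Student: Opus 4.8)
The commutativity of the diagram is the assertion that
\[
  \langle \anti'(C), D \rangle \;=\; \anti'\big(\langle C, \anti(D) \rangle\big) \qquad \text{in } \Hom_{\Mod(U)}(\ind_U^G(V_1),V_3)
\]
for all $C \in \Hom_{\Mod(U)}(\ind_U^G(V_2),V_3)$ and $D \in \Hom_{\Mod(U)}(V_1, \ind_U^G(V_2))$; since $\anti$ is involutive this is equivalent to the form $\anti'(\langle C, D\rangle) = \langle \anti'(C), \anti(D)\rangle$. I would first remark that the hypothesis on $V_1$ is exactly what makes the diagram meaningful: being finitely generated over $U$, any such $D$ has image inside $\ind_U^Y(V_2)$ for some compact $U$-bi-invariant $Y$, so that $\anti$ preserves compact supports (cf.\ Remark \ref{rem:anti-inv-support}) and the refined pairing \eqref{f:refinedpairing} is at our disposal.

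The plan is then a direct computation with the explicit formulas, evaluating both sides on an arbitrary $\phi \in \ind_U^G(V_1)$. For the left-hand side, inserting $\langle C,D\rangle(\psi) = C(\langle\psi,D\rangle)$ with $\langle\psi,D\rangle(y) = D(y\psi(y))(y)$ into $\anti'(C)(\psi) = \sum_{h \in G/U} h(C(\chara_{h^{-1},U}^{h\psi(h)}))$ produces
\[
  \langle \anti'(C), D\rangle(\phi) \;=\; \sum_{h \in G/U} h\Big(C\big(\chara_{h^{-1},U}^{\,h(D(h\phi(h))(h))}\big)\Big).
\]
For the right-hand side the key intermediate step is to compute, using $\anti(D)(x)(y) = y^{-1}(D(y^{-1}x)(y^{-1}))$ together with the $G$-action rule \eqref{f:g-char},
\[
  \langle \chara_{g^{-1},U}^{\,g\phi(g)},\,\anti(D) \rangle \;=\; \chara_{g^{-1},U}^{\,g(D(g\phi(g))(g))}\,;
\]
this one checks on the coset $g^{-1}U$, using that $D(g\phi(g))$ is right $U$-equivariant as an element of $\ind_U^G(V_2)$. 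Feeding this into the formula for $\anti'$ then gives $\anti'(\langle C,\anti(D)\rangle)(\phi) = \sum_{g \in G/U} g(C(\chara_{g^{-1},U}^{\,g(D(g\phi(g))(g))}))$, the same expression. It then remains only to observe that this sum does not depend on the choice of coset representatives, which follows from the $U$-equivariance of $C$ and \eqref{f:g-char}.

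I do not expect a genuine conceptual obstacle here; the main thing to be careful about is the bookkeeping — the proliferation of inverses and the interplay between the left $G$-action and the right $U$-equivariance of functions in $\ind_U^G(-)$, plus the representative-independence check. If one wishes to minimize the bookkeeping, it is enough to test the two $U$-equivariant maps $\ind_U^G(V_1) \to V_3$ on the generating functions $\chara_{h,U}^v$ ($h \in G$, $v \in V_1$), which span $\ind_U^G(V_1)$ over $k$; each of the sums above then collapses to its single term indexed by $h$, and both sides reduce to $h(C(\chara_{h^{-1},U}^{\,h(D(hv)(h))}))$, with representative-independence automatic.
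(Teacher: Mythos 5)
Your proposal is correct and follows essentially the same route as the paper: evaluate both sides on $\phi \in \ind_U^G(V_1)$, compute $\langle \chara_{g^{-1},U}^{\,g\phi(g)}, \anti(D)\rangle = \chara_{g^{-1},U}^{\,\anti(D)(\phi(g))(g^{-1})}$, and reduce to the defining identity $\anti(D)(\phi(g))(g^{-1}) = g(D(g\phi(g))(g))$. The only difference is cosmetic (you substitute the definition of $\anti$ one step earlier and note the optional shortcut of testing on the generators $\chara_{h,U}^{v}$).
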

\begin{proof}
Let $F \in \Hom_{\Mod(U)}(\ind_U^G(V_2),V_3)$ and $G \in \Hom_{\Mod(U)}(V_1, \ind_U^G(V_2))$. We have to show that
\begin{equation*}
  \langle \anti'(F), G \rangle = \anti' \langle F, \anti(G) \rangle)
\end{equation*}
holds true in $\Hom_{\Mod(U)}(\ind_U^G(V_1),V_3)$. By evaluation in a $\phi \in \ind_U^G(V_1)$ this amounts to showing that
\begin{equation*}
  \anti'(F) \langle \phi, G \rangle ) = \anti'(\langle F, \anti(G) \rangle )(\phi)
\end{equation*}
holds true in $V_3$. By inserting the definitions the left hand side becomes
\begin{equation*}
  \sum_{g \in G/U} g(F(\chara_{g^{-1},U}^{gG(g\phi(g))(g)})) \ .
\end{equation*}
Correspondingly the right hand side becomes
\begin{equation*}
  \sum_{g \in G/U} g(\langle F,\anti(G)\rangle (\chara_{g^{-1},U}^{g\phi(g)})) = \sum_{g \in G/U} g(F(\langle \chara_{g^{-1},U}^{g\phi(g)}, \anti(G) \rangle )) \ .
\end{equation*}
For a fixed $g \in G$ we compute the element $\langle \chara_{g^{-1},U}^{g\phi(g)}, \anti(G) \rangle \in \ind_U^G(V_2)$ as follows. First of all, by definition we have
\begin{equation*}
 \langle \chara_{g^{-1},U}^{g\phi(g)}, \anti(G) \rangle (h) = \anti(G)(h \chara_{g^{-1},U}^{g\phi(g)}(h))(h) \ .
\end{equation*}
But
\begin{equation*}
  h \chara_{g^{-1},U}^{g\phi(g)}(h) =
  \begin{cases}
  0  & \text{if $h \not\in g^{-1}U$}, \\
  \phi(g) & \text{if $h = g^{-1} u$ with $u \in U$.}
  \end{cases}
\end{equation*}
Inserting this into the right hand side of the previous equation we obtain
\begin{equation*}
  \langle \chara_{g^{-1},U}^{g\phi(g)}, \anti(G) \rangle (h) =
  \begin{cases}
  0  & \text{if $h \not\in g^{-1}U$}, \\
  \anti(G)(\phi(g))(h) & \text{if $h \in g^{-1}U$}
  \end{cases}
  = \chara_{g^{-1},U}^{\anti(G)(\phi(g))(g^{-1})}(h) \ .
\end{equation*}
Hence our above right hand side is equal to
\begin{equation*}
  \sum_{g \in G/U} g(F(\chara_{g^{-1},U}^{\anti(G)(\phi(g))(g^{-1})})) \ .
\end{equation*}
This reduces our assertion to the claim that
\begin{equation*}
  \anti(G)(\phi(g))(g^{-1}) = gG(g\phi(g))(g) \ ,
\end{equation*}
which holds by definition of $\anti$.
\end{proof}

\begin{remark}\label{rem:J'-fin}
   The involution $\anti'$ respects the subspace $\Hom_{\Mod(U)}^{fin}(\ind_U^G(V_1),V_3)$.
\end{remark}
\begin{proof}
Let $F \in \Hom_{\Mod(U)}^{fin}(\ind_U^G(V_1),V_3)$ and assume that $F | \ind_U^{UhU}(V_1) = 0$ for some $h \in G$. For $\phi \in \ind_U^{Uh^{-1}U}(V_1)$ we compute
\begin{equation}\label{f:J'-fin}
  \anti'(F)(\phi) = \sum_{g \in G/U} g(F(\chara_{g^{-1},U}^{g\phi(g)})) = \sum_{g \in Uh^{-1}U/U} g(F(\chara_{g^{-1},U}^{g\phi(g)})) = 0 \ .
\end{equation}
Hence $\anti'(F) | \ind_U^{Uh^{-1}U}(V_1) = 0$.
\end{proof}

The above discussion generalizes in a straightforward way to complexes in $\Mod(G)$ and hence to $\Ext$-groups. For the latter the only points to notice are the following:
\begin{itemize}
  \item[--] If $V_2 \xrightarrow{\sim} \mathcal{I}^\bullet$ is an injective resolution in $\Mod(G)$ and if $V_1$ is finitely generated as a $U$-representation then
\begin{equation*}
  \Ext^i_{\Mod(U)}(V_1, \ind_U^G(V_2)) = h^i(\Hom_{\Mod(U)}(V_1, \ind_U^G(\mathcal{I}^\bullet))) \ .
\end{equation*}
(The reason for this is that the functor $\Ext^i_{\Mod(U)}(V_1,-)$ commutes with arbitrary direct sums in $\Mod(U)$. Namely, since $V_1$ is finite dimensional $\Hom_k(V_1,-)$ with the diagonal $U$-action is an endo-functor of $\Mod(U)$ which preserves injective objects and commutes with arbitrary direct sums. Hence $\Ext^i_{\Mod(U)}(V_1,-) = H^i(U,\Hom_k(V_1,-))$. Recall that $H^i(U,-)$ commutes with arbitrary direct sums.)
 \item[--] By using an injective resolution $V_3 \xrightarrow{\sim} \mathcal{K}^\bullet$ we may define
\begin{equation*}
  \Ext^{i,fin}_{\Mod(U)}(\ind_U^G(V_1),V_3) := h^i(\Hom_{\Mod(U)}^{fin}(\ind_U^G(V_1),\mathcal{K}^\bullet[i]) \ .
\end{equation*}
\end{itemize}

Propositions \ref{prop:refined-trace} and \ref{prop:J'-J} therefore combine into the following general result.

\begin{proposition}\label{prop:J'-J-Ext}
   If $V_1$ is finitely generated as a $U$-representation then, for any $i, j \geq 0$, the diagram of pairings
\begin{equation*}
  \xymatrix{
    \Ext^i_{\Mod(U)}(\ind_U^G(V_2),V_3)  \ar@{=}[d] \ar@{}[r]|{\times}  & \Ext^j_{\Mod(U)}(V_1, \ind_U^G(V_2)) \ar@{=}[d] \ar[r]^-{\circ} & \Ext^{i+j}_{\Mod(U)}(V_1,V_3)  \\
    \Ext^i_{\Mod(U)}(\ind_U^G(V_2),V_3) \ar@{}[r]|{\times}  & \Ext^j_{\Mod(U)}(V_1, \ind_U^G(V_2)) \ar[d]_{\anti} \ar[r]^-{\langle\ ,\  \rangle} & \Ext^{i+j,fin}_{\Mod(U)}(\ind_U^G(V_1),V_3)
    \ar[u]^{\Tr} \\
    \Ext^i_{\Mod(U)}(\ind_U^G(V_2),V_3) \ar[u]_{\anti'} \ar@{}[r]|{\times} & \Ext^j_{\Mod(U)}(V_1, \ind_U^G(V_2)) \ar[r]^-{\langle\ ,\  \rangle} & \Ext^{i+j,fin}_{\Mod(U)}(\ind_U^G(V_1),V_3)
    \ar[u]^{\anti'} \ar[d]_{\Tr} \\
    \Ext^i_{\Mod(U)}(\ind_U^G(V_2),V_3)  \ar@{=}[u] \ar@{}[r]|{\times}  & \Ext^j_{\Mod(U)}(V_1, \ind_U^G(V_2)) \ar@{=}[u] \ar[r]^-{\circ} & \Ext^{i+j}_{\Mod(U)}(V_1,V_3)
     }
\end{equation*}
   is commutative, where $\circ$ denotes the Yoneda composition pairing.
\end{proposition}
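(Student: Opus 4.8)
The plan is to reduce everything to the two non-derived prototypes already established, Propositions~\ref{prop:refined-trace} and~\ref{prop:J'-J}, at the level of $\Hom$-complexes, and then to pass to cohomology. The big diagram decomposes into three sub-squares: the top one (rows~1--2) and the bottom one (rows~3--4) each encode the identity $\Tr(\langle C,D\rangle)=C\circ D$, while the middle one (rows~2--3) encodes $\langle\anti'(C),D\rangle=\anti'(\langle C,\anti(D)\rangle)$; these are precisely the $\Ext$-versions of Propositions~\ref{prop:refined-trace} and~\ref{prop:J'-J}.

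First I would fix injective resolutions $V_2\xrightarrow{\sim}\mathcal I^\bullet$ and $V_3\xrightarrow{\sim}\mathcal K^\bullet$ in $\Mod(G)$. Since restriction $\Mod(G)\to\Mod(U)$ has the exact left adjoint $\ind_U^G$, it preserves injectives, so $\mathcal K^\bullet$ is also an injective resolution of $V_3$ in $\Mod(U)$, and being a bounded-below complex of injectives it is homotopically injective. This yields the four identifications needed to read the diagram as one of cochain complexes:
\begin{itemize}
\item[--] $\Ext^j_{\Mod(U)}(V_1,\ind_U^G(V_2))=h^j\big(\Hom_{\Mod(U)}(V_1,\ind_U^G(\mathcal I^\bullet))\big)$, by the first bullet point in the discussion preceding the Proposition; here $V_1$ finitely generated over $U$ is used, and it also makes $\ind_U^G(\mathcal I^\bullet)$ a resolution of $\ind_U^G(V_2)$ by $\Hom_{\Mod(U)}(V_1,-)$-acyclic objects;
\item[--] $\Ext^i_{\Mod(U)}(\ind_U^G(V_2),V_3)=h^i\big(\Hom_{\Mod(U)}(\ind_U^G(\mathcal I^\bullet),\mathcal K^\bullet)\big)$, since $\ind_U^G(V_2)\to\ind_U^G(\mathcal I^\bullet)$ is a quasi-isomorphism and $\mathcal K^\bullet$ is homotopically injective;
\item[--] $\Ext^{i+j}_{\Mod(U)}(V_1,V_3)=h^{i+j}\big(\Hom_{\Mod(U)}(V_1,\mathcal K^\bullet)\big)$;
\item[--] $\Ext^{i+j,fin}_{\Mod(U)}(\ind_U^G(V_1),V_3)=h^{i+j}\big(\Hom_{\Mod(U)}^{fin}(\ind_U^G(V_1),\mathcal K^\bullet)\big)$, in the notation of the second bullet point; here $\Hom_{\Mod(U)}^{fin}(\ind_U^G(V_1),\mathcal K^\bullet)$ really is a subcomplex, since post-composing a homomorphism on $\ind_U^G(V_1)$ with the differential of $\mathcal K^\bullet$ does not enlarge its support.
\end{itemize}
In these models the Yoneda pairing $\circ$ is computed by the ordinary composition pairing of $\Hom$-complexes, $(C,D)\mapsto C\circ D$ with $C\in\Hom_{\Mod(U)}^\bullet(\ind_U^G(\mathcal I^\bullet),\mathcal K^\bullet)$ and $D\in\Hom_{\Mod(U)}^\bullet(V_1,\ind_U^G(\mathcal I^\bullet))$ --- the usual statement that a Yoneda product is computed by cochain composition once the middle argument is replaced by a resolution adapted to the two outer ones.

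Next I would check that $\anti$, $\anti'$, $\Tr$ and the refined pairing $\langle\,,\,\rangle$ all lift degreewise to these $\Hom$-complexes. Their defining formulas make sense verbatim after replacing $V_2,V_3$ by $\mathcal I^\bullet,\mathcal K^\bullet$, and because those formulas only manipulate the $G$-action and $G$-equivariant structure maps --- never the cohomological degree --- each of $\anti,\anti',\Tr$ preserves the grading of the $\Hom$-complex and commutes both with pre-composition by the differential of $V_1$ and with post-composition by the differentials of $\ind_U^G(\mathcal I^\bullet)$ and $\mathcal K^\bullet$; hence each is an honest chain map and no Koszul signs are produced. This is why the final statement has no signs, even though $\circ$ and $\langle\,,\,\rangle$ obey the signed Leibniz rule. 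Lemma~\ref{lem:fin} and Remark~\ref{rem:J'-fin}, being support statements valid in each degree, show that $\langle\,,\,\rangle$ lands in $\Hom_{\Mod(U)}^{fin}$ and that $\anti'$ preserves this subcomplex. Passing to $h^*$ then recovers exactly the maps appearing in the diagram.

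To conclude: the proofs of Propositions~\ref{prop:refined-trace} and~\ref{prop:J'-J} are purely formal manipulations of these formulas that nowhere use that $V_2,V_3$ are single representations, so they apply verbatim with $V_2=\mathcal I^\bullet$, $V_3=\mathcal K^\bullet$, giving the commutativity of all three sub-squares at the cochain level; taking $h^*$ and invoking the four identifications yields the diagram. The main --- and still quite mild --- obstacle I anticipate is checking that the cochain composition genuinely computes the Yoneda product on $\Ext^*_{\Mod(U)}(\ind_U^G(V_2),V_3)$ with the particular, non-injective resolution $\ind_U^G(\mathcal I^\bullet)$ in the middle slot; this is exactly where the finite generation of $V_1$ (via the $\Hom_{\Mod(U)}(V_1,-)$-acyclicity of $\ind_U^G(\mathcal I^\bullet)$) and the homotopy injectivity of $\mathcal K^\bullet$ get used.
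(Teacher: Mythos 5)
Your proposal is correct and follows essentially the same route as the paper: the authors also obtain Proposition \ref{prop:J'-J-Ext} by replacing $V_2$ and $V_3$ with injective resolutions $\mathcal{I}^\bullet$ and $\mathcal{K}^\bullet$, noting exactly your two identifications (the finite generation of $V_1$ making $\ind_U^G(\mathcal{I}^\bullet)$ a $\Hom_{\Mod(U)}(V_1,-)$-acyclic resolution, and the definition of $\Ext^{*,fin}$ via $\mathcal{K}^\bullet$), and then applying Propositions \ref{prop:refined-trace} and \ref{prop:J'-J} degreewise before passing to cohomology. Your extra remarks on chain-map compatibility and the absence of Koszul signs only make explicit what the paper leaves as ``generalizes in a straightforward way.''
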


In the following the most interesting special case is $V_1 := V_2 := k$ and $V_3 := V$.

\begin{corollary}\label{cor:J'-J-Ext}
   For any $i, j \geq 0$ the diagram of pairings
\begin{equation*}
  \xymatrix{
    \Ext^i_{\Mod(U)}(\mathbf{X}_U,V)  \ar@{=}[d] \ar@{}[r]|{\times}  & \Ext^j_{\Mod(U)}(k, \mathbf{X}_U) \ar@{=}[d] \ar[r]^-{\circ} & \Ext^{i+j}_{\Mod(U)}(k,V)  \\
    \Ext^i_{\Mod(U)}(\mathbf{X}_U,V) \ar@{}[r]|{\times}  & \Ext^j_{\Mod(U)}(k, \mathbf{X}_U) \ar[d]_{\anti} \ar[r]^-{\langle\ ,\  \rangle} & \Ext^{i+j,fin}_{\Mod(U)}(\mathbf{X}_U,V)
    \ar[u]^{\Tr} \\
    \Ext^i_{\Mod(U)}(\mathbf{X}_U,V) \ar[u]_{\anti'} \ar@{}[r]|{\times} & \Ext^j_{\Mod(U)}(k, \mathbf{X}_U) \ar[r]^-{\langle\ ,\  \rangle} & \Ext^{i+j,fin}_{\Mod(U)}(\mathbf{X}_U,V)
    \ar[u]^{\anti'} \ar[d]_{\Tr} \\
    \Ext^i_{\Mod(U)}(\mathbf{X}_U,V)  \ar@{=}[u] \ar@{}[r]|{\times}  & \Ext^j_{\Mod(U)}(k, \mathbf{X}_U) \ar@{=}[u] \ar[r]^-{\circ} & \Ext^{i+j}_{\Mod(U)}(k,V)
     }
\end{equation*}
   is commutative.
\end{corollary}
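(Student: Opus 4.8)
The plan is to obtain the corollary simply as the special case $V_1 := V_2 := k$ (the trivial $G$-representation) and $V_3 := V$ of Proposition~\ref{prop:J'-J-Ext}. The only hypothesis there is that $V_1$ be finitely generated as a $U$-representation, and this is automatic here since $V_1 = k$ is one-dimensional. Under this substitution $\ind_U^G(V_1) = \ind_U^G(V_2) = \ind_U^G(k) = \mathbf{X}_U$ by the very definition of $\mathbf{X}_U$, and $\Ext^{i+j}_{\Mod(U)}(k,V) = H^{i+j}(U,V)$; feeding these identifications into the four-row diagram of Proposition~\ref{prop:J'-J-Ext} reproduces verbatim the four-row diagram in the statement of the corollary, the outer rows being the Yoneda composition pairing $\circ$ and the two middle rows the refined pairing $\langle\ ,\ \rangle$ followed by the trace map $\Tr$.

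What remains is a bookkeeping check that the occurrences of $\anti$ and $\anti'$ in the corollary's diagram really are the instances of the general involutions of Proposition~\ref{prop:J'-J-Ext} under this substitution. Here $\anti$ is the cohomological version of the involution of Proposition~\ref{prop:anti-inv} on $\Ext^j_{\Mod(U)}(k,\ind_U^G(k)) = \Ext^j_{\Mod(U)}(k,\mathbf{X}_U)$, while $\anti'$ is the transpose involution of part~B) on $\Ext^i_{\Mod(U)}(\ind_U^G(k),V) = \Ext^i_{\Mod(U)}(\mathbf{X}_U,V)$ and on its subspace $\Ext^{i+j,fin}_{\Mod(U)}(\mathbf{X}_U,V)$ (for the latter one invokes Remark~\ref{rem:J'-fin}). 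Since none of these identifications involves a choice beyond the injective resolutions already fixed, the asserted commutativity is nothing but a transcription of Proposition~\ref{prop:J'-J-Ext}.

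I expect no genuine obstacle. The one point worth spelling out in the write-up is the Frobenius reciprocity identification $\Ext^j_{\Mod(U)}(k,\mathbf{X}_U) \cong \Ext^j_{\Mod(G)}(\mathbf{X}_U,\mathbf{X}_U) = E^j$, under which the displayed $\anti$ becomes the anti-involution of the Yoneda algebra from Proposition~\ref{prop:coho-anti}: it is precisely through this identification that the corollary will feed into the equivariance property~\eqref{f:equiv2}, and hence into Theorem~A.
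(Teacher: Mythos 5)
Your proposal is correct and is exactly the paper's route: the corollary is stated there as the special case $V_1 = V_2 = k$, $V_3 = V$ of Proposition~\ref{prop:J'-J-Ext}, with the finite-generation hypothesis trivially satisfied and the identifications $\ind_U^G(k) = \mathbf{X}_U$ and $\Ext^{i+j}_{\Mod(U)}(k,V) = H^{i+j}(U,V)$ made implicitly. Your extra remarks on tracking $\anti$, $\anti'$ and the Frobenius reciprocity identification are consistent with how the paper later uses the corollary.
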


Next we analyze the right most column in the above diagram. By construction we have
\begin{equation}\label{f:decomp1}
   \Ext^{i,fin}_{\Mod(U)}(\mathbf{X}_U,V) = \bigoplus_{h \in U \backslash G / U} \Ext^i(\ind_U^{UhU}(k), V) \ .
\end{equation}
By the computation in the proof of Remark \ref{rem:J'-fin} the involution $\anti'$ respects this decomposition meaning that it maps the summand $\Ext^i(\ind_U^{UhU}(k), V)$ to the summand $\Ext^i(\ind_U^{Uh^{-1}U}(k), V)$. Furthermore, defining $U_h := U \cap hUh^{-1}$, we have the evaluation map $\ev_h : \ind_U^{UhU}(k) \rightarrow k$ sending $\phi$ to $\phi(h)$. By Frobenius reciprocity it induces the isomorphism
\begin{align*}
  \Fr_h : \Hom_{\Mod(U_h)}(k,V) = V^{U_h} & \xrightarrow{\;\cong\;} \Hom_{\Mod(U)}(\ind_U^{UhU}(k), V)   \\
                                        v & \longmapsto \Fr_h(v)(\phi) := \sum_{u \in U/U_h} \phi(uh) u v \ .
\end{align*}
Using an injective resolution of $V$ it gives rise to isomorphisms
\begin{equation*}
   \Fr_h : \Ext^i_{\Mod(U_h)}(k,V) \xrightarrow{\;\cong\;} \Ext^i_{\Mod(U)}(\ind_U^{UhU}(k), V)
\end{equation*}
for any $i \geq 0$

\begin{lemma}\label{lem:factorFR}
For any $i \geq 0$ and any $h \in G$, the diagram of isomorphisms
\begin{equation*}
  \xymatrix{
     \Ext^i_{\Mod(U)}(\ind_U^{UhU}(k),V)  \ar[r]^-{\anti'}_-{\cong} &  \Ext^i_{\Mod(U)}(\ind_U^{Uh^{-1}U}(k),V)  \\
    \Ext^i_{\Mod(U_h)}(k,V) \ar[r]^-{(h^{-1})_*}_-{\cong} \ar[u]^{\Fr_h}_{\cong} & \Ext^i_{\Mod(U_{h^{-1}})}(k,V) \ar[u]_{\Fr_{h^{-1}}}^{\cong}.   }
\end{equation*}
is commutative.
\end{lemma}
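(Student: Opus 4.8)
The plan is to reduce the statement to a commuting square of complexes and then to a one-line evaluation on a single generator. First I would fix an injective resolution $V \xrightarrow{\sim} \mathcal{K}^\bullet$ in $\Mod(G)$ — the resolution underlying all four $\Ext$-groups in the diagram. Since restriction along any open subgroup $U' \subseteq G$ admits the exact left adjoint $\ind_{U'}^G$, the complex $\mathcal{K}^\bullet$ is simultaneously a resolution of $V$ by objects injective in $\Mod(U)$, $\Mod(U_h)$ and $\Mod(U_{h^{-1}})$; hence $\Ext^i_{\Mod(U)}(\ind_U^{UhU}(k),V) = h^i(\Hom_{\Mod(U)}(\ind_U^{UhU}(k),\mathcal{K}^\bullet))$, $\Ext^i_{\Mod(U_h)}(k,V) = h^i((\mathcal{K}^\bullet)^{U_h})$, and similarly with $h^{-1}$ in place of $h$. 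Each of the four arrows in the diagram is the map induced, degree by degree, by the corresponding construction applied to the $G$-module $\mathcal{K}^n$: for $\Fr_h$ and $\Fr_{h^{-1}}$ this is exactly how they are defined on $\Ext$-groups; for $\anti'$ it is the cochain-level formula of Section~\ref{subsec:coho-anti}, restricted via Remark~\ref{rem:J'-fin} to the $h$- (resp.\ $h^{-1}$-) summand of $\Hom^{fin}$; and the conjugation isomorphism $(h^{-1})_*$ is, on any resolution of $V$ by $G$-modules, induced by multiplication by $h^{-1}$, i.e.\ by the chain isomorphism $(\mathcal{K}^\bullet)^{U_h} \xrightarrow{\cong} (\mathcal{K}^\bullet)^{U_{h^{-1}}}$, $x \mapsto h^{-1}x$ (well defined because $U_{h^{-1}} = h^{-1}U_h h$ and $G$ acts on $\mathcal{K}^\bullet$ by chain automorphisms). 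Thus it suffices to prove commutativity of the analogous square in which $V$ is replaced by an arbitrary $G$-module $M$, with $\Hom_{\Mod(U)}(\ind_U^{UhU}(k),M)$ and $\Hom_{\Mod(U)}(\ind_U^{Uh^{-1}U}(k),M)$ on top, $M^{U_h}$ and $M^{U_{h^{-1}}}$ at the bottom, vertical maps $\Fr_h$, $\Fr_{h^{-1}}$ and horizontal maps $\anti'$, $x \mapsto h^{-1}x$.

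For this square I would invoke the explicit formulas. The space $\ind_U^{UhU}(k)$ has $k$-basis $\{\chara_{uhU} : u \in U/U_h\}$ and is generated over $U$ by $\chara_{hU}$, because ${}^{u}\chara_{hU} = \chara_{uhU}$ by \eqref{f:g-char}; unwinding the Frobenius reciprocity that defines $\Fr_h$ gives $\Fr_h(v)(\chara_{uhU}) = uv$, in particular $\Fr_h(v)(\chara_{hU}) = v$. After extending an $F \in \Hom_{\Mod(U)}(\ind_U^{UhU}(k),M)$ by zero to all of $\mathbf{X}_U$, the specialized formula for $\anti'$ reads $\anti'(F)(\chara_{gU}) = g(F(\chara_{g^{-1}U}))$, the right-hand side vanishing unless $g^{-1}U \subseteq UhU$. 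Now $\anti'(\Fr_h(x))$ and $\Fr_{h^{-1}}(h^{-1}x)$ are both $U$-equivariant maps out of $\ind_U^{Uh^{-1}U}(k)$, which is generated over $U$ by $\chara_{h^{-1}U}$; evaluating at that generator, and taking $h^{-1}$ as a representative of the coset $h^{-1}U$, one gets $\anti'(\Fr_h(x))(\chara_{h^{-1}U}) = h^{-1}\big(\Fr_h(x)(\chara_{hU})\big) = h^{-1}x$ on the one hand and $\Fr_{h^{-1}}(h^{-1}x)(\chara_{h^{-1}U}) = h^{-1}x$ on the other. Since they agree on a generator they coincide, and the square commutes.

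The computational heart is therefore a single line; the step requiring care is the bookkeeping in the first paragraph — arranging that one $G$-injective resolution computes all four $\Ext$-groups simultaneously, that every arrow is the degreewise one, and above all identifying the conjugation map $(h^{-1})_*$ with multiplication by $h^{-1}$ at the cochain level. Once this is set up there is no remaining obstacle, and in particular no sign subtleties arise here: $\anti'$, $\Fr_h$ and $(h^{-1})_*$ are ordinary $k$-linear chain maps, the graded signs entering only later when $\anti'$ is compared with the Yoneda product.
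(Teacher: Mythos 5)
Your proposal is correct and follows essentially the same route as the paper: reduce to degree $i=0$ via an injective resolution of $V$ (the paper simply says "it suffices to establish the case $i=0$", while you spell out the bookkeeping), then compute with the explicit formulas for $\anti'$ and $\Fr_h$. The only cosmetic difference is that you evaluate both $U$-equivariant maps on the single generator $\chara_{h^{-1}U}$ of $\ind_U^{Uh^{-1}U}(k)$, whereas the paper evaluates on a general $\phi$ and sums over $U/U_{h^{-1}}$; both computations are equivalent.
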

\begin{proof}
It suffices to establish the case $i = 0$, i.e., the commutativity of the diagram
\begin{equation*}
  \xymatrix{
     \Hom_{\Mod(U)}(\ind_U^{UhU}(k),V)  \ar[r]^-{\anti'} &  \Hom_{\Mod(U)}(\ind_U^{Uh^{-1}U}(k),V)  \\
               V^{U_h} \ar[r]^-{h^{-1} \cdot} \ar[u]^{\Fr_h} & V^{U_{h^{-1}}} \ar[u]_{\Fr_{h^{-1}}}.   }
\end{equation*}
In \eqref{f:J'-fin} we have seen that, for $F \in \Hom_{\Mod(U)}(\ind_U^{UhU}(k),V)$ and $\phi \in \ind_U^{Uh^{-1}U}(k)$, we have
\begin{equation*}
  \anti'(F)(\phi) = \sum_{g \in Uh^{-1}U/U} g(F(\chara_{g^{-1},U}^{g\phi(g)})) = \sum_{u \in U/U_{h^{-1}}} uh^{-1}(F(\chara_{h,U}^{\phi(uh^{-1})})) \ .
\end{equation*}
Note that $g\phi(g)=\phi(g)$ in this situation. Consider now $F = \Fr_h(v)$. Then
\begin{equation*}
  \Fr_h(v)(\chara_{h,U}^{\phi(uh^{-1})}) = \sum_{u' \in U/U_h} \chara_{h,U}^{\phi(u h^{-1})}(u'h) u' v = \phi(uh^{-1}) v \ .
\end{equation*}
It follows that
\begin{equation*}
   \anti'(\Fr_h(v))(\phi) = \sum_{u \in U/U_{h^{-1}}}  uh^{-1} \phi(uh^{-1}) v = \Fr_{h^{-1}}(h^{-1}v)(\phi) \ .
\end{equation*}
\end{proof}

\begin{lemma}\label{lem:Tr-cores}
   For any $i \geq 0$ and any $h \in G$, the diagram
\begin{equation*}
  \xymatrix{
     \Ext^i_{\Mod(U)}(\ind_U^{UhU}(k),V) \ar[r]^-{\Tr} &  \Ext^i_{\Mod(U)}(k,V) = H^i(U,V)      \\
     \Ext^i_{\Mod(U_h)}(k,V) = H^i(U_h,V)  \ar[u]^{\Fr_h}_{\cong} \ar[ur]_{\cores^{U_h}_U}                     }
\end{equation*}
is commutative.
\end{lemma}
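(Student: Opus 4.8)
The plan is to reduce the statement to the case $i=0$ by working throughout with one fixed injective resolution, and then to settle $i=0$ by an explicit computation on characteristic functions.

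First I would fix an injective resolution $V \xrightarrow{\sim} \mathcal{K}^\bullet$ in $\Mod(G)$. Since restriction from $\Mod(U)$ to $\Mod(U_h)$ has the exact left adjoint $\ind_{U_h}^U$, it preserves injectives, so $\mathcal{K}^\bullet$ simultaneously computes $H^i(U,V) = h^i((\mathcal{K}^\bullet)^U)$, $H^i(U_h,V) = h^i((\mathcal{K}^\bullet)^{U_h})$ and $\Ext^i_{\Mod(U)}(\ind_U^{UhU}(k),V) = h^i(\Hom_{\Mod(U)}(\ind_U^{UhU}(k),\mathcal{K}^\bullet))$, the last identification being the one recalled just above the lemma. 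With respect to these descriptions all three arrows of the triangle are induced by applying, termwise to $\mathcal{K}^\bullet$, the degree-zero maps $\Fr_h$, $\Tr$ and the transfer $\cores^{U_h}_U = \sum_{u\in U/U_h}u(-) \colon (\mathcal{K}^\bullet)^{U_h}\to(\mathcal{K}^\bullet)^U$ (for $\cores$ this is the classical description of corestriction along the finite-index inclusion $U_h\subseteq U$ — finiteness of $[U:U_h]$ being implicit already in the formation of $\cores^{U_h}_U$ — and is legitimate since corestriction does not depend on the chosen resolution). It therefore suffices to prove commutativity of the degree-zero triangle with $V$ replaced by an arbitrary object of $\Mod(G)$, which is the case $i=0$.

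For $i=0$ I would simply compute. Given $v\in V^{U_h}$, extend $\Fr_h(v)$ by zero to $\ind_U^G(k)$. Since $\chara_{g,U}^{1}=\chara_{gU}$ and $\Fr_h(v)$ vanishes on functions not supported on $UhU$, only the cosets $g\in UhU/U$ contribute to $\Tr(\Fr_h(v))(1)=\sum_{g\in G/U}\Fr_h(v)(\chara_{gU})$. Writing $UhU=\bigsqcup_{u\in U/U_h}uhU$ and noting that $\chara_{uhU}(u'h)=1$ exactly when $u'\in uU_h$ (and $0$ otherwise), the defining formula for $\Fr_h$ gives $\Fr_h(v)(\chara_{uhU})=\sum_{u'\in U/U_h}\chara_{uhU}(u'h)\,u'v = uv$, whence $\Tr(\Fr_h(v))(1)=\sum_{u\in U/U_h}uv = \cores^{U_h}_U(v)$, as desired.

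The main obstacle is the reduction in the first step: one must be sure that each of the three arrows really is induced by the stated termwise map on the common resolution $\mathcal{K}^\bullet$, equivalently that $\Fr_h$, $\Tr$ and $\cores^{U_h}_U$ are compatible with the connecting homomorphisms of the relevant $\delta$-functors in $V$. For $\Fr_h$ this is built into its construction from Frobenius reciprocity (an exact functorial isomorphism, hence compatible with passing to resolutions), for $\Tr$ it is the definition given before the lemma, and for $\cores^{U_h}_U$ it is the classical transfer construction. Alternatively one can bypass this bookkeeping by invoking that $\Ext^\bullet_{\Mod(U_h)}(k,-)$ is the universal $\delta$-functor extending $\Hom_{\Mod(U_h)}(k,-)$: two morphisms of $\delta$-functors from it into $H^\bullet(U,-)$ that agree in degree $0$ must coincide, so it is enough to know that $\Tr\circ\Fr_h$ and $\cores^{U_h}_U$ are morphisms of $\delta$-functors and then to quote the $i=0$ computation above.
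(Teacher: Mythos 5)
Your proposal is correct and follows essentially the same route as the paper: reduce to the case $i=0$ via the fixed injective resolution of $V$ (the paper does this implicitly, as in the preceding lemma), and then carry out exactly the same evaluation $\Tr(\Fr_h(v))(1)=\sum_{g\in UhU/U}\Fr_h(v)(\chara_{gU})=\sum_{u\in U/U_h}uv=\cores^{U_h}_U(v)$. The extra care you take in justifying the reduction (restriction preserving injectives, or the $\delta$-functor argument) is a fine elaboration of what the paper leaves tacit, not a different method.
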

\begin{proof}
Again it suffices to establish the case $i = 0$, i.e., the commutativity of the diagram
\begin{equation*}
  \xymatrix{
     \Hom_{\Mod(U)}(\ind_U^{UhU}(k),V) \ar[r]^-{\Tr} &  V^U      \\
                V^{U_h}  \ar[u]^{\Fr_h}_{\cong} \ar[ur]_{\cores^{U_h}_U}      \ .               }
\end{equation*}
We compute
\begin{align*}
    \Tr(\Fr_h(v))(1) & = \sum_{g \in UhU/U} \Fr_h(v)(\chara_{g,U}^1) = \sum_{g \in UhU/U} \sum_{u \in U/U_h} \chara_{g,U}^1(uh)uv   \\
                     & = \sum_{u \in U/U_h} uv = \cores^{U_h}_U(v) \ .
\end{align*}
\end{proof}

We further specialize the case under consideration to the top degree $i+j = d$ and the trivial representation $V = k$. By the above two lemmas the right most column in the diagram of Cor.\ \ref{cor:J'-J-Ext} can be rewritten as
\begin{equation*}
  \xymatrix{
    \oplus_{h \in U\backslash G / U} H^d(U_h,k) \ar[d]_{\sum_{h \in U\backslash G / U} \cores^{U_h}_U} \ar[rr]^{\oplus_h (h^{-1})_*} && \oplus_{h \in U\backslash G / U} H^d(U_{h^{-1}},k) \ar[d]^{\sum_{h \in U\backslash G / U} \cores^{U_{h^{-1}}}_U} \\
    H^d(U,k)  &&  H^d(U,k) .  }
\end{equation*}
From now on we \textbf{assume again} that $U$ is a Poincar\'e group. Then all the individual corestriction maps $\cores^{U_h}_U :  H^d(U_h,k) \xrightarrow{\cong} H^d(U,k)$ are isomorphisms of $1$-dimensional $k$-vector spaces. Moreover we have the commutative diagrams
\begin{equation}\label{diag:cores-chi}
  \xymatrix{
    H^d(U_h,k) \ar[d]_{\cores^{U_h}_U}^{\cong} \ar[rr]^{ (h^{-1})_*} &&  H^d(U_{h^{-1}},k) \ar[d]^{\cores^{U_{h^{-1}}}_U}_{\cong} \\
    H^d(U,k)  \ar[rr]^{\chi_G(h^{-1}) \cdot}  &&  H^d(U,k)   }
\end{equation}
where $\chi_G : G \rightarrow k^\times$ is the duality character of \cite{SS} Lemma 2.6. Note that $\chi_G$ is trivial on any open pro-$p$ subgroup of $G$.

This motivates us to twist the anti-involution $\anti$ on $\Ext^j_{\Mod(U)}(k, \mathbf{X}_U)$ as follows: Since $\chi_G$ is $U$-bi-invariant we may view $\chi_G$ as well as $\chi_G^{-1}$ as functions on $G/U$. Multiplying a function in $\mathbf{X}_U$ by $\chi_G^{\pm 1}$ then is a $U$-equivariant operator and therefore induces by functoriality a multiplication operator $\chi_G^{\pm 1} \cdot$ on $\Ext^j_{\Mod(U)}(k, \mathbf{X}_U)$. We define
\begin{equation*}
  \anti \otimes \chi_G := (\chi_G^{-1} \cdot) \circ \anti \ .
\end{equation*}
We have the decomposition
\begin{equation}\label{f:decomp2}
  \Ext^j_{\Mod(U)}(k, \mathbf{X}_U) = H^j(U, \mathbf{X}_U) = \bigoplus_{h \in U \backslash G / U} H^j(U,\ind_U^{UhU}(k)) \ .
\end{equation}
By Remark \ref{rem:anti-inv-support}, $\anti$ maps the summand $H^j(U,\ind_U^{UhU}(k))$ to the summand $H^j(U,\ind_U^{Uh^{-1}U}(k))$. Hence $(\anti \otimes \chi_G)(-) = \chi_G(h^{-1}) \cdot \anti(-)$.

\begin{remark}\label{rem:Jchi-anti}
   $\anti \otimes \chi_G$ is also an anti-involution. Since $\chi_G$ is $U$-bi-invariant this follows by using the formula for the Yoneda product given in \cite{OS} Prop.\ 5.3.
\end{remark}

In the special situation under consideration we now obtain the following strengthening of Cor.\ \ref{cor:J'-J-Ext}.

\begin{proposition}\label{prop:J'-Jchi-d}
   For any $i \geq 0$ the diagram of Yoneda pairings
\begin{equation*}
  \xymatrix{
    \Ext^i_{\Mod(U)}(\mathbf{X}_U,k)  \ar@{}[r]|{\times}  & \Ext^{d-i}_{\Mod(U)}(k, \mathbf{X}_U) \ar[d]_{\anti \otimes \chi_G}  \ar[r]^-{\circ} & H^d(U,k) \ar@{=}[d] \\
    \Ext^i_{\Mod(U)}(\mathbf{X}_U,k) \ar[u]_{\anti'}  \ar@{}[r]|{\times}  & \Ext^{d-i}_{\Mod(U)}(k, \mathbf{X}_U)  \ar[r]^-{\circ} & H^d(U,k)
     }
\end{equation*}
   is commutative.
\end{proposition}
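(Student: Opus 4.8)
The plan is to obtain the statement by feeding the double-coset analysis of Lemmas~\ref{lem:factorFR} and~\ref{lem:Tr-cores} and the character diagram~\eqref{diag:cores-chi} into Corollary~\ref{cor:J'-J-Ext}. Fix classes $f\in\Ext^i_{\Mod(U)}(\mathbf{X}_U,k)$ and $e\in\Ext^{d-i}_{\Mod(U)}(k,\mathbf{X}_U)$; what must be shown is $\anti'(f)\circ e = f\circ(\anti\otimes\chi_G)(e)$ in $H^d(U,k)$. Corollary~\ref{cor:J'-J-Ext} (which packages the $\Ext$-versions of Propositions~\ref{prop:refined-trace} and~\ref{prop:J'-J}), read along its left and right sides through all four rows, immediately yields
\[
  \anti'(f)\circ e \;=\; \Tr\big(\anti'(\langle f,\anti(e)\rangle)\big)\qquad\text{and}\qquad f\circ\anti(e)\;=\;\Tr\big(\langle f,\anti(e)\rangle\big),
\]
where $\langle\ ,\ \rangle$ is the refined pairing~\eqref{f:refinedpairing} with values in $\Ext^{d,fin}_{\Mod(U)}(\mathbf{X}_U,k)$ and $\anti'$ is the involution on the latter space. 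Since every map in sight is $k$-bilinear, I would decompose $e=\sum_{h\in U\backslash G/U}e_h$ along~\eqref{f:decomp2}, with $e_h\in H^{d-i}(U,\ind_U^{UhU}(k))$, and prove the desired identity one summand at a time.

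Fix such an $h$. By Remark~\ref{rem:anti-inv-support} the class $\anti(e_h)$ lies in the summand $H^{d-i}(U,\ind_U^{Uh^{-1}U}(k))$, and — the one small point one has to check at the outset — the refined pairing then puts $\langle f,\anti(e_h)\rangle$ into the \emph{single} summand $\Ext^d_{\Mod(U)}(\ind_U^{Uh^{-1}U}(k),k)$ of the decomposition~\eqref{f:decomp1}, simply because $\langle\phi,\anti(e_h)\rangle$ is supported on $Uh^{-1}U$ for every $\phi$. Writing $\xi:=\Fr_{h^{-1}}^{-1}\langle f,\anti(e_h)\rangle\in H^d(U_{h^{-1}},k)$, Lemma~\ref{lem:factorFR} applied with $h^{-1}$ in place of $h$ identifies $\anti'(\langle f,\anti(e_h)\rangle)$ with $\Fr_h(h_*\xi)$, where $h_*\colon H^d(U_{h^{-1}},k)\xrightarrow{\cong}H^d(U_h,k)$ is the translation isomorphism. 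Plugging this into Lemma~\ref{lem:Tr-cores} in degree $d$ (once for $h$, once for $h^{-1}$) converts both traces above into corestrictions:
\[
  \anti'(f)\circ e_h \;=\; \cores^{U_h}_U(h_*\xi)\qquad\text{and}\qquad f\circ\anti(e_h)\;=\;\cores^{U_{h^{-1}}}_U(\xi).
\]

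Finally I would compare the two right-hand sides using the standing hypothesis that $U$, and hence every open subgroup $U_h$, is a Poincar\'e group of dimension $d$: then all corestriction maps between top cohomologies of these groups are isomorphisms of one-dimensional $k$-vector spaces, and the commutative square~\eqref{diag:cores-chi}, applied with $h^{-1}$ in place of $h$, relates $\cores^{U_h}_U\circ h_*$ to $\cores^{U_{h^{-1}}}_U$ through multiplication by the value of $\chi_G$ on the coset distinguishing $hU$ from $h^{-1}U$. By the computation of $\anti\otimes\chi_G$ on the summands of $\Ext^{d-i}_{\Mod(U)}(k,\mathbf{X}_U)$ recorded just after~\eqref{f:decomp2}, that scalar is exactly the one by which $(\anti\otimes\chi_G)(e_h)$ differs from $\anti(e_h)$, whence $\anti'(f)\circ e_h = f\circ(\anti\otimes\chi_G)(e_h)$; summing over $h$ finishes the proof. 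I expect the only genuine work to be the initial support observation for the refined pairing and the disciplined tracking of $h$ against $h^{-1}$ (and of the attendant power of $\chi_G$) through the chain of identifications — the substantive content has already been established in Corollary~\ref{cor:J'-J-Ext}, Lemma~\ref{lem:factorFR} and Lemma~\ref{lem:Tr-cores}.
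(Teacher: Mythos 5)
Your argument is correct and is essentially the paper's own proof, written element-wise instead of as a diagram of pairings: reduce via Corollary \ref{cor:J'-J-Ext} to the refined pairings, decompose along double cosets using \eqref{f:decomp1} and \eqref{f:decomp2} (with the same support observation that comes from the proof of Lemma \ref{lem:fin}), and convert the traces through Lemmas \ref{lem:factorFR} and \ref{lem:Tr-cores} together with \eqref{diag:cores-chi} to produce the scalar $\chi_G(h)$ matching the twist of $\anti$ by $\chi_G$ on the $h$-summand. The differences from the paper are purely presentational.
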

\begin{proof}
It suffices to show that the diagram
\begin{equation*}
  \xymatrix{
    \Ext^i_{\Mod(U)}(\mathbf{X}_U,k) \ar@{}[r]|{\times}  & \Ext^{d-i}_{\Mod(U)}(k, \mathbf{X}_U) \ar[d]_{\anti \otimes \chi_G} \ar[r]^-{\langle\ ,\  \rangle} & \Ext^{d,fin}_{\Mod(U)}(\mathbf{X}_U,k)
    \\
    \Ext^i_{\Mod(U)}(\mathbf{X}_U,k) \ar[u]_{\anti'} \ar@{}[r]|{\times} & \Ext^{d-i}_{\Mod(U)}(k, \mathbf{X}_U) \ar[r]^-{\langle\ ,\  \rangle} & \Ext^{d,fin}_{\Mod(U)}(\mathbf{X}_U,k)
    \ar@{=}[u]
     }
\end{equation*}
is commutative. We do know from Cor.\ \ref{cor:J'-J-Ext} that
\begin{equation*}
  \xymatrix{
    \Ext^i_{\Mod(U)}(\mathbf{X}_U,k) \ar@{}[r]|{\times}  & \Ext^{d-i}_{\Mod(U)}(k, \mathbf{X}_U) \ar[d]_{\anti} \ar[r]^-{\langle\ ,\  \rangle} & \Ext^{d,fin}_{\Mod(U)}(\mathbf{X}_U,k)
    \\
    \Ext^i_{\Mod(U)}(\mathbf{X}_U,k) \ar[u]_{\anti'} \ar@{}[r]|{\times} & \Ext^{d-i}_{\Mod(U)}(k, \mathbf{X}_U) \ar[r]^-{\langle\ ,\  \rangle} & \Ext^{d,fin}_{\Mod(U)}(\mathbf{X}_U,k)
    \ar[u]^{\anti'}
     }
\end{equation*}
is commutative. In view of the decompositions \eqref{f:decomp1} and \eqref{f:decomp2} it further suffices to show, for any $h \in G$, the commutativity of
\begin{equation*}
  \xymatrix{
    \Ext^i_{\Mod(U)}(\mathbf{X}_U,k) & \times  & \Ext^{d-i}_{\Mod(U)}(k,\ind_U^{UhU}(k)) \ar[dd]_{\anti \otimes \chi_G}^{= \chi_G(h^{-1}) \cdot \anti} \ar[r]^-{\langle\ ,\  \rangle} & \Ext^d_{\Mod(U)}(\ind_U^{UhU}(k),k)  \ar[d]_{\Tr}^{\cong}
    \\
     &&& H^d(U,k) \\
    \Ext^i_{\Mod(U)}(\mathbf{X}_U,k) \ar[uu]_{\anti'} & \times & \Ext^{d-i}_{\Mod(U)}(k, \ind_U^{Uh^{-1}U}(k)) \ar[r]^-{\langle\ ,\  \rangle} & \Ext^d_{\Mod(U)}(\ind_U^{Uh^{-1}U}(k),k)
    \ar[u]^{\Tr}_{\cong}
     }
\end{equation*}
based on the fact that we know
\begin{equation*}
  \xymatrix{
    \Ext^i_{\Mod(U)}(\mathbf{X}_U,k) & \times  & \Ext^{d-i}_{\Mod(U)}(k,\ind_U^{UhU}(k)) \ar[d]_{\anti} \ar[r]^-{\langle\ ,\  \rangle} & \Ext^d_{\Mod(U)}(\ind_U^{UhU}(k),k)
    \\
    \Ext^i_{\Mod(U)}(\mathbf{X}_U,k) \ar[u]_{\anti'} & \times & \Ext^{d-i}_{\Mod(U)}(k, \ind_U^{Uh^{-1}U}(k)) \ar[r]^-{\langle\ ,\  \rangle} & \Ext^d_{\Mod(U)}(\ind_U^{Uh^{-1}U}(k),k)
    \ar[u]^{\anti'}
     }
\end{equation*}
to be commutative. Note that by the proof of Lemma \ref{lem:fin} all the four pairings are indeed subpairings of the previous ones. It remains to observe that by Lemmas \ref{lem:factorFR} and \ref{lem:Tr-cores} and the diagram \eqref{diag:cores-chi} we have the commutative diagram
\begin{equation*}
  \xymatrix{
    \Ext^d_{\Mod(U)}(\ind_U^{UhU}(k),k)  \ar[r]^-{\Tr} & H^d(U,k)  \\
    \Ext^d_{\Mod(U)}(\ind_U^{Uh^{-1}U}(k),k) \ar[u]^{\anti'} \ar[r]^-{\Tr} & H^d(U,k) \ar[u]_{\chi_G(h) \cdot} . }
\end{equation*}
\end{proof}

By using Frobenius reciprocity and Lemma \ref{lem:anti'-swap} we may rewrite Prop.\ \ref{prop:J'-Jchi-d} as a commutative diagram for the Yoneda pairing \eqref{f:Y-pairing}.

\begin{corollary}\label{cor:swap-Jchi}
   For any $i \geq 0$ the diagram of Yoneda pairings
\begin{equation*}
  \xymatrix{
    \Ext^i_{\Mod(G)}(\mathbf{X}_U \otimes_k \mathbf{X}_U,k) \qquad  \ar@{}[r]|{\times}  & \Ext^{d-i}_{\Mod(G)}(\mathbf{X}_U, \mathbf{X}_U) \ar[d]_{\anti \otimes \chi_G}  \ar[r]^-{\eqref{f:Y-pairing}} & H^d(U,k) \ar@{=}[d] \\
    \Ext^i_{\Mod(G)}(\mathbf{X}_U \otimes_k \mathbf{X}_U,k) \qquad \ar[u]_{\varsigma^*}  \ar@{}[r]|{\times}  & \Ext^{d-i}_{\Mod(G)}(\mathbf{X}_U, \mathbf{X}_U)  \ar[r]^-{\eqref{f:Y-pairing}} & H^d(U,k)
     }
\end{equation*}
   is commutative.
\end{corollary}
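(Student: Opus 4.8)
The plan is to deduce the corollary from Proposition~\ref{prop:J'-Jchi-d} by transporting that diagram along Frobenius reciprocity, exactly as the remark preceding the corollary indicates. First I would record the identifications of $\Ext$-groups that are needed. Since $\ind_U^G$ is exact and left adjoint to $\res^G_U$, restriction preserves injectives, so the injective resolutions $k\xrightarrow{\sim}\mathcal{J}^\bullet$ and $\mathbf{X}_U\xrightarrow{\sim}\mathcal{I}^\bullet$ in $\Mod(G)$ restrict to injective resolutions in $\Mod(U)$. Combining this with the $G$-equivariant isomorphism $\mathbf{X}_U\otimes_k\mathbf{X}_U\cong\ind_U^G(\mathbf{X}_U)$ and the chain of quasi-isomorphisms used in the proof of Proposition~\ref{prop:onesided-bimod} — running through $\Hom^\bullet_{\Mod(G)}(\mathbf{X}_U\otimes_k\mathcal{I}^\bullet,\mathcal{J}^\bullet)\cong\Hom^\bullet_{\Mod(G)}(\ind_U^G\mathcal{I}^\bullet,\mathcal{J}^\bullet)\cong\Hom^\bullet_{\Mod(U)}(\mathcal{I}^\bullet,\mathcal{J}^\bullet)$ — one obtains a natural isomorphism $\Ext^i_{\Mod(G)}(\mathbf{X}_U\otimes_k\mathbf{X}_U,k)\cong\Ext^i_{\Mod(U)}(\mathbf{X}_U,k)$; here the Künneth formula over the field $k$ shows that $\mathbf{X}_U\otimes_k\mathbf{X}_U\xrightarrow{\sim}\mathcal{I}^\bullet\otimes_k\mathcal{I}^\bullet$ is a quasi-isomorphism, which is all that is used even though $\mathcal{I}^\bullet\otimes_k\mathcal{I}^\bullet$ need not be injective. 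Applying Frobenius reciprocity to the \emph{source} $\mathbf{X}_U=\ind_U^G(k)$ likewise yields $\Ext^{d-i}_{\Mod(G)}(\mathbf{X}_U,\mathbf{X}_U)\cong\Ext^{d-i}_{\Mod(U)}(k,\mathbf{X}_U)=H^{d-i}(U,\mathbf{X}_U)$ and $\Ext^d_{\Mod(G)}(\mathbf{X}_U,k)\cong\Ext^d_{\Mod(U)}(k,k)=H^d(U,k)$.

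Next I would check that the three structures in the corollary's diagram become, under these identifications, the corresponding structures of Proposition~\ref{prop:J'-Jchi-d}. The Yoneda pairing \eqref{f:Y-pairing} is, essentially by its very construction through Proposition~\ref{prop:onesided-bimod}, induced on cohomology by the composition pairing $\Hom^\bullet_{\Mod(U)}(\mathcal{I}^\bullet,\mathcal{J}^\bullet)\times\Hom^\bullet_{\Mod(U)}(k,\mathcal{I}^\bullet)\to\Hom^\bullet_{\Mod(U)}(k,\mathcal{J}^\bullet)$; so it corresponds to the Yoneda composition $\Ext^i_{\Mod(U)}(\mathbf{X}_U,k)\times\Ext^{d-i}_{\Mod(U)}(k,\mathbf{X}_U)\to\Ext^d_{\Mod(U)}(k,k)$ forming the top and bottom rows of Proposition~\ref{prop:J'-Jchi-d} — here one keeps track that $e$ acts through the right tensor factor (the factor surviving Frobenius reciprocity) while the ``$1$'' is the left factor, i.e.\ the inducing $G/U$-direction. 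The factor-swap $\varsigma^*$ on $\Ext^i_{\Mod(G)}(\mathbf{X}_U\otimes_k\mathbf{X}_U,k)$ corresponds to $\anti'$ on $\Ext^i_{\Mod(U)}(\mathbf{X}_U,k)$: this is Lemma~\ref{lem:anti'-swap} for $V=k$, upgraded from $\Hom$ to $\Ext$ by applying that lemma to the resolution $\mathcal{J}^\bullet$ term by term (its diagram is natural in the coefficients, $\anti'$ and $\varsigma^*$ both being functorial). Finally, $\anti\otimes\chi_G$ on $\Ext^{d-i}_{\Mod(G)}(\mathbf{X}_U,\mathbf{X}_U)$ is, under the Frobenius identification with $H^{d-i}(U,\mathbf{X}_U)$, the same operator $\anti\otimes\chi_G$ appearing in Proposition~\ref{prop:J'-Jchi-d}; this is because the cohomological $\anti$ on $E^*=\Ext^*_{\Mod(G)}(\mathbf{X}_U,\mathbf{X}_U)$ was constructed in part \textbf{A)} of Section~\ref{subsec:coho-anti} precisely through the Frobenius isomorphism $\Hom^\bullet_{\Mod(G)}(\mathbf{X}_U,\ind_U^G(\mathcal{J}^\bullet))\cong\Hom^\bullet_{\Mod(U)}(k,\ind_U^G(\mathcal{J}^\bullet))$ together with the quasi-isomorphisms $\kappa^*$ and $\sigma_*$, so the two occurrences of $\anti$ literally agree and the twist by $\chi_G$ matches on both sides.

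Feeding all three translations into the commutative square of Proposition~\ref{prop:J'-Jchi-d} — whose horizontal maps are the composition pairings, whose left column is $\anti'$, and whose right column is $\anti\otimes\chi_G$ — then produces verbatim the commutative square asserted in the corollary. Thus the corollary is a reformulation of Proposition~\ref{prop:J'-Jchi-d} carrying no further content.

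The main obstacle is purely the compatibility bookkeeping, where two points deserve care. The first is to confirm that the single Frobenius identification $\Ext^i_{\Mod(G)}(\mathbf{X}_U\otimes_k\mathbf{X}_U,k)\cong\Ext^i_{\Mod(U)}(\mathbf{X}_U,k)$ simultaneously turns \eqref{f:Y-pairing} into the composition pairing and turns $\varsigma^*$ into $\anti'$; both pass through the same canonical isomorphism $\mathbf{X}_U\otimes_k\mathbf{X}_U\cong\ind_U^G(\mathbf{X}_U)$ and the unit of $\ind_U^G\dashv\res^G_U$, so they coincide, but this should be made explicit. The second is the Koszul sign entering the swap $\varsigma$ on the \emph{complex} $\mathcal{I}^\bullet\otimes_k\mathcal{I}^\bullet$: on $\Ext^i_{\Mod(G)}(\mathbf{X}_U\otimes_k\mathbf{X}_U,k)$ the relevant map is the sign-free swap of $\mathbf{X}_U\otimes_k\mathbf{X}_U$ (concentrated in degree $0$), and one checks that the comparison quasi-isomorphism intertwines it with the graded swap, since the latter reduces to the ungraded one on degree-zero classes. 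Both Proposition~\ref{prop:J'-Jchi-d} and the present statement are sign-free; the genuinely signed identity is \eqref{f:equiv2}, proved separately as Proposition~\ref{linear}.
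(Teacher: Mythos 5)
Your proposal is correct and takes essentially the same route as the paper: the corollary is obtained from Proposition~\ref{prop:J'-Jchi-d} by transporting it through Frobenius reciprocity (the identification \eqref{XOX}) and Lemma~\ref{lem:anti'-swap}, which turns $\anti'$ into $\varsigma^*$, with $\anti\otimes\chi_G$ matching on both sides because the cohomological $\anti$ is itself defined via the Frobenius isomorphism. Your additional bookkeeping only makes explicit the compatibilities the paper leaves implicit.
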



\subsection{The $E^*$-equivariance of the Yoneda pairing: Proof of equation (\ref{f:equiv2})}

The $U$-equivariant map $\mathbf{X}_U\longrightarrow \mathbf{X}_U \otimes_k \mathbf{X}_U$ sending $\varphi \mapsto \chara_U \otimes \varphi$ induces an isomorphism
\begin{equation}\label{XOX}
\Ext_{\Mod(G)}^*(\mathbf{X}_U \otimes_k \mathbf{X}_U,k)\overset{\sim}{\longrightarrow} \Ext_{\Mod(U)}^*(\mathbf{X}_{U},k)
\end{equation}
under which the $\mathbf{X}_U$-factor swap $\varsigma^*$ on the left-hand side corresponds to $\anti'$ on the right (see Lemma \ref{lem:anti'-swap}).

As we have just seen in Corollary \ref{cor:swap-Jchi} the Yoneda pairing (\ref{f:Y-pairing}) satisfies
$$
\langle \varsigma^*(f),\tau \rangle=\langle f, (\anti\otimes \chi_G)(\tau)\rangle.
$$
We consider the Yoneda algebra $E^*$ as a bimodule over itself. Observe that $\Ext_{\Mod(G)}^*(\mathbf{X}_U \otimes_k \mathbf{X}_U,k)$ has two commuting right $E^*$-module structures, one coming from each $\mathbf{X}_U$-factor. The pairing $\langle -,- \rangle$ has the following linearity properties for these actions, which summarize
(\ref{f:equiv1}) and (\ref{f:equiv2}).

\begin{proposition}\label{linear}
For any $\tau \in E^s$ we have the identities:
\begin{itemize}
\item[i.] $\langle f \cdot (1\otimes \tau),e\rangle=\langle f, \tau \cdot e\rangle$;
\item[ii.] $\langle f \cdot (\tau\otimes 1),e\rangle=(-1)^{s(d-i-s)}\cdot \langle f, e \cdot (\anti\otimes \chi_G)(\tau) \rangle$;
\end{itemize}
for all $f \in \Ext_{\Mod(G)}^i(\mathbf{X}_U \otimes_k \mathbf{X}_U,k)$ and $e \in E^{d-i-s}$.
\end{proposition}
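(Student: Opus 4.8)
The plan is to derive both identities from the already-established cohomological results, Corollary \ref{cor:swap-Jchi} and the equivariance statement \eqref{f:equiv1}, promoted from the ungraded (degree-$0$) case to the full graded situation, with the one genuinely new ingredient being the Koszul sign. For part (i), I would first note that the right $E^*$-action through the \emph{second} $\X_U$-factor corresponds, under the isomorphism \eqref{XOX}, simply to precomposition in the outer $\X_U$, while the left $E^*$-action on $\Ext^*_{\Mod(G)}(\X_U,\X_U)$ used to form $\tau\cdot e$ is postcomposition; so $\langle f\cdot(1\otimes\tau),e\rangle = f\circ(1\otimes\tau)\circ(1\otimes e) = f\circ(1\otimes(\tau e)) = \langle f,\tau\cdot e\rangle$ by associativity of Yoneda composition. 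This is essentially \eqref{f:equiv1}, but one must check that no sign intervenes: since the extra factor $1$ (the identity in degree $0$) is inserted on the left throughout and only the \emph{second} tensor slot is ever touched, the Koszul rule produces no sign, and the identity holds on the nose. I expect part (i) to be routine.

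For part (ii) the strategy is to combine three facts: the factor-swap identity $\langle\varsigma^*(f),\tau\rangle = \langle f,(\anti\otimes\chi_G)(\tau)\rangle$ from Corollary \ref{cor:swap-Jchi}; the commuting-actions structure, namely that $\varsigma^*$ intertwines the two right $E^*$-module structures on $\Ext^*_{\Mod(G)}(\X_U\otimes_k\X_U,k)$, i.e. $\varsigma^*(f\cdot(\tau\otimes 1)) = \varsigma^*(f)\cdot(1\otimes\tau)$; and part (i) just proved. Formally:
$$
\langle f\cdot(\tau\otimes 1),e\rangle
= \langle \varsigma^*(\varsigma^*(f\cdot(\tau\otimes 1))),e\rangle
= \langle \varsigma^*(f)\cdot(1\otimes\tau),\, e\rangle \ \text{(after re-swapping)},
$$
and then applying (i) to the swapped element and re-applying the swap identity of Corollary \ref{cor:swap-Jchi}. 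The one subtlety is that $\varsigma^*$, being induced by the factor-swap $\varsigma$ of the \emph{graded} object $\X_U\otimes_k\X_U$ promoted to $\Ext^*$, is itself not strictly involutive but satisfies $(\varsigma^*)^2 = \id$ only up to the signs coming from the Koszul symmetry of the tensor product of complexes; similarly, passing $\tau$ (of degree $s$) from the first slot to the second across an element $f$ of degree $i$ and an element $e$ of degree $d-i-s$ forces a Koszul transposition sign. Tracking these signs is the heart of the computation.

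The sign bookkeeping is the main obstacle, and I would handle it at the chain level rather than on cohomology. Replacing $\Ext^*_{\Mod(G)}(\X_U\otimes_k\X_U,k)$ by the complex $\Hom^\bullet_{\Mod(G)}(\mathcal{I}^\bullet\otimes_k\mathcal{I}^\bullet,\mathcal{J}^\bullet)$ and $E^*$ by $(H_U^\bullet)^{op}$, the two right actions are precomposition in the left, resp. right, $\mathcal{I}^\bullet$; the swap $\varsigma$ of $\mathcal{I}^\bullet\otimes_k\mathcal{I}^\bullet$ carries the Koszul sign $(-1)^{ab}$ on an $(a,b)$-bidegree piece; and moving a homogeneous $\tau$ of degree $s$ past the outer evaluation against an element of the appropriate complementary degree is exactly the transposition that the Koszul sign rule for $\Hom^\bullet$ of complexes records. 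Carefully composing these, the net sign on $f\in\Ext^i$, $\tau\in E^s$, $e\in E^{d-i-s}$ comes out to $(-1)^{s(d-i-s)}$ — note this is $(-1)^{\deg(e)\deg(\tau)}$ as in \eqref{f:equiv2}, consistent since $\deg(e)=d-i-s$. I would verify the exponent by checking it is additive in $s$ (so it suffices to treat the generators of $E^*$, or even just to get the parity right degree-by-degree) and that it specializes correctly to the no-sign statement \eqref{f:equiv1} when $s=0$, which it does. Everything else is a formal consequence of associativity of Yoneda composition and the functoriality of $\varsigma^*$, both already available.
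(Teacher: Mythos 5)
Your part (i) and the overall skeleton of part (ii) (swap identity from Corollary \ref{cor:swap-Jchi}, the intertwining $\varsigma^*(f\cdot(1\otimes\tau))=\varsigma^*(f)\cdot(\tau\otimes 1)$, then part (i)) coincide with the paper's argument. But there is a genuine gap in part (ii): the sign $(-1)^{s(d-i-s)}$ is never actually derived, and the mechanism you propose for it is off target. You attribute the sign to Koszul signs in the swap of $\mathcal{I}^\bullet\otimes_k\mathcal{I}^\bullet$ and to $\varsigma^*$ being ``not strictly involutive''; in fact $\varsigma$ is the factor swap of $\mathbf{X}_U\otimes_k\mathbf{X}_U$, a representation concentrated in degree $0$, so $\varsigma^*$ on $\Ext^*_{\Mod(G)}(\mathbf{X}_U\otimes_k\mathbf{X}_U,k)$ is a strict involution and contributes no sign; likewise the intertwining relation $f\circ(1\otimes\tau)\circ\varsigma=f\circ\varsigma\circ(\tau\otimes1)$ holds on the nose. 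The sign in the paper's proof arises at a step your sketch omits entirely: after applying Corollary \ref{cor:swap-Jchi} (which twists $e$ into $(\anti\otimes\chi_G)(e)$ --- note your displayed intermediate equality silently drops this twist), then part (i), then Corollary \ref{cor:swap-Jchi} a second time, one is left with $\langle f,(\anti\otimes\chi_G)\bigl(\tau\cdot(\anti\otimes\chi_G)(e)\bigr)\rangle$, and the conversion of the inner term into $e\cdot(\anti\otimes\chi_G)(\tau)$ uses that $\anti\otimes\chi_G$ is a \emph{graded} anti-involution (Remark \ref{rem:Jchi-anti}), i.e.\ reverses products up to the Koszul sign $(-1)^{\deg(\tau)\deg(e)}$; this is the sole source of the factor $(-1)^{s(d-i-s)}$.

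Your fallback --- verifying the exponent by ``additivity in $s$'' or by ``getting the parity right degree-by-degree'' --- is not a proof: the constraint $\deg(e)=d-i-s$ ties the degrees together, and any consistency check of that kind already presupposes the anti-multiplicativity of $\anti\otimes\chi_G$, which is exactly the ingredient to be invoked. So the fix is not chain-level bookkeeping in $\Hom^\bullet_{\Mod(G)}(\mathcal{I}^\bullet\otimes_k\mathcal{I}^\bullet,\mathcal{J}^\bullet)$ but the purely cohomological chain: apply Corollary \ref{cor:swap-Jchi}, part (i), Corollary \ref{cor:swap-Jchi} again, and finish with Remark \ref{rem:Jchi-anti} together with $(\anti\otimes\chi_G)^2=\id$.
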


\begin{proof}
The first identity in \ref{linear} is almost immediate from the definitions:
\begin{align*}
\langle f \cdot (1\otimes \tau),e\rangle &= f \circ (1\otimes \tau) \circ (1 \otimes e) \\
&= f \circ (1\otimes \tau \circ e) \\
&=\langle f, \tau \cdot e\rangle.
\end{align*}
For legibility we abuse notation here and suppress various shifts. For instance $\tau$ is a morphism
$\mathbf{X}_U \longrightarrow \mathbf{X}_U[s]$ in $D(G)$, and similarly for $e$, so $\tau \circ e$ really means
the composition $\tau[d-i-s]\circ e$ in $D(G)$. We will adopt this convention of suppressing shifts when they are obvious from the context.

To verify the second identity we relate the two $E^*$-module structures via the swap $\varsigma$. Note that
\begin{align*}
\varsigma^* \big(f\cdot (1\otimes \tau)\big)&=f \circ (1\otimes \tau) \circ \varsigma \\
&= f \circ \varsigma \circ (\tau \otimes 1)\\
&= \varsigma^*(f) \cdot  (\tau \otimes 1).
\end{align*}
As we observed in Remark \ref{rem:Jchi-anti} the twist $\anti\otimes \chi_G$ is an anti-involution of $E^*$. Therefore,
after introducing the auxiliary $\tilde{f}=\varsigma^*(f)$ the previous computation gives
\begin{align*}
\langle f \cdot (\tau\otimes 1),e\rangle&=f \circ (\tau \otimes 1) \otimes (1\otimes e) \\
&=\varsigma^* \big(\tilde{f}\cdot (1\otimes \tau)\big) \circ (1 \otimes e) \\
&=\langle \varsigma^* \big(\tilde{f}\cdot (1\otimes \tau)\big) , e \rangle \\
&=\langle \tilde{f}\cdot (1\otimes \tau) , (\anti\otimes \chi_G)(e) \rangle \\
&=\langle \tilde{f}, \tau\cdot (\anti\otimes \chi_G)(e) \rangle \\
&=\langle f, (\anti\otimes \chi_G)\big(\tau\cdot (\anti\otimes \chi_G)(e)\big) \rangle \\
&=(-1)^{\deg(e)\deg(\tau)}\cdot \langle f, e\cdot (\anti\otimes \chi_G)(\tau) \rangle \\
\end{align*}
In the fourth and sixth equalities we used Corollary \ref{cor:swap-Jchi}.
\end{proof}

\subsection{Applications to $\Delta$}\label{subsec:Delta}

Let $M=\bigoplus_{i \in \Bbb{Z}} M^i$ denote a graded {\it{right}} module over the Yoneda algebra $E^*$. In this section we will first construct a graded analogue of the functor $\Delta$ which takes $M$ to a dual graded right $E^*$-module $\Delta_{\gr}(M)$.

First consider the graded dual vector space $\Hom_k^{\gr}(M,k)$ whose $i^{\text{th}}$ graded piece is the space of linear forms $f: M \rightarrow k$ of degree $i$, in other words
linear forms $f: M^{-i}\rightarrow k$. Then take the shifted graded dual $\Hom_k^{\gr}(M,k)[-d]$. View $M$ as a left $E^*$-module via $\anti\otimes \chi_G$, which involves a sign,
$$
\tau \star e:=(-1)^{\deg(\tau)\deg(e)}e\cdot (\anti\otimes \chi_G)(\tau).
$$
We regard $E^*$ as a dg algebra with zero-differentials and follow \cite[10.6.3]{BL}. $\Hom_k^{\gr}(M,k)[-d]$ then acquires a natural
right $E^*$-module structure:

\begin{definition}
Let $\Delta_{\gr}(M)=\Hom_k^{\gr}(M,k)[-d]$ with the right $E^*$-module structure given by
$$
(f\tau)(e)=f(\tau \star e)=(-1)^{\deg(\tau)\deg(e)} f\big(e\cdot (\anti\otimes \chi_G)(\tau)\big)
$$
for homogeneous $\tau \in E^*$ and $e \in M$.
\end{definition}

Specializing to $E^*$ thought of as a right $E^*$-module over itself via multiplication, part two of Proposition \ref{linear} tells us the following.

\begin{corollary}\label{deltaE}
$\Ext_{\Mod(G)}^*(\mathbf{X}_U \otimes_k \mathbf{X}_U,k)\overset{\sim}{\longrightarrow} \Delta_{\gr}(E^*)$.
\end{corollary}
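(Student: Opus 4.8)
The plan is to obtain this directly from Proposition \ref{linear}(ii) together with the perfectness of the Yoneda pairing \eqref{f:Y-pairing}. First I would record the underlying isomorphism of graded $k$-vector spaces. The discussion following Proposition \ref{prop:onesided-bimod} --- concretely, the fact that its bottom horizontal arrow induces the duality isomorphism (4) of \cite{SS} on cohomology --- shows that the pairing \eqref{f:Y-pairing} is perfect, so that $f \mapsto \langle f, - \rangle$ defines isomorphisms
\begin{equation*}
  \Ext^i_{\Mod(G)}(\mathbf{X}_U \otimes_k \mathbf{X}_U, k) \xrightarrow{\;\cong\;} \Hom_k(E^{d-i}, k) \qquad (i \in \mathbb{Z}).
\end{equation*}
By the definition of $\Delta_{\gr}$, the target $\Hom_k(E^{d-i},k)$ is exactly the degree-$i$ graded component of $\Delta_{\gr}(E^*) = \Hom_k^{\gr}(E^*,k)[-d]$, so assembling these over all $i$ gives a degree-preserving $k$-linear isomorphism $\Phi \colon \Ext^*_{\Mod(G)}(\mathbf{X}_U \otimes_k \mathbf{X}_U, k) \xrightarrow{\;\cong\;} \Delta_{\gr}(E^*)$, $f \mapsto \langle f, - \rangle$.

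Second I would verify that $\Phi$ respects the right $E^*$-module structures, where on the source one takes the action of $E^*$ through the \emph{first} tensor factor (the ``$\tau \otimes 1$'' action). For homogeneous $\tau \in E^*$ and $f \in \Ext^i_{\Mod(G)}(\mathbf{X}_U \otimes_k \mathbf{X}_U,k)$ I would evaluate both $\Phi(f \cdot (\tau \otimes 1))$ and $\Phi(f) \cdot \tau$ on an arbitrary $e \in E^{d-i-\deg(\tau)}$. Proposition \ref{linear}(ii) says the first equals $(-1)^{\deg(\tau)\deg(e)} \langle f, e \cdot (\anti \otimes \chi_G)(\tau)\rangle$, while the defining formula for the right $E^*$-action on $\Delta_{\gr}$ gives the same value for the second. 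As $\Phi$ is additive and bijective, this shows it is an isomorphism of graded right $E^*$-modules.

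I expect the one point needing care to be purely bookkeeping: matching the shift $[-d]$ and the Koszul sign $(-1)^{\deg(\tau)\deg(e)}$ that are built into the definition of $\Delta_{\gr}$ with the shift and sign that appear in Proposition \ref{linear}(ii), and keeping track of the fact that it is the first-factor action that is relevant here --- the second-factor action, by Proposition \ref{linear}(i), corresponds instead to the left regular action of $E^*$ on itself, not to $\Delta_{\gr}$. All the substantive work was done in Proposition \ref{linear}; this corollary merely re-expresses its second identity in the language of $\Delta_{\gr}$.
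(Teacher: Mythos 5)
Your proposal is correct and follows the paper's own route: the underlying graded isomorphism is the one recorded after Proposition \ref{prop:onesided-bimod} (adjoint to the Yoneda pairing \eqref{f:Y-pairing}), and the right $E^*$-equivariance for the first-factor action is exactly Proposition \ref{linear}(ii), whose sign and shift match the definition of $\Delta_{\gr}$ as you check. The paper states this in one line ("immediate" from part two of Proposition \ref{linear}, with the action through the first $\mathbf{X}_U$-factor); your write-up just makes the same bookkeeping explicit.
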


\begin{proof}
This is immediate. Recall that $E^*$ acts on the source via the first $\mathbf{X}_U$-factor.
\end{proof}

\begin{remark}\label{secondfact}
Part one of Proposition \ref{linear} shows the $E^*$-action via the second $\mathbf{X}_U$-factor intertwines with the right $E^*$-module structure on
$\Hom_k^{\gr}(E^*,k)[-d]$ arising from viewing $E^*$ as a {\it{left}} module over itself. Proposition \ref{prop:onesided-bimod} is a stronger version of this remark. We will often think of
$\Delta_{\gr}(E^*)$ as a right $E^*\otimes_k E^*$-module this way.
\end{remark}

Let $\Mod^{\gr}(E^*)$ denote the graded category of graded right $E^*$-modules. Similarly $\Mod^{\gr}(k)$ denotes the graded category of graded $k$-vector spaces. Thus
$\Hom_{\Mod^{\gr}(k)}(M,k)$ is the graded dual from before. The following standard properties are readily checked:
\begin{itemize}
\item $\Hom_{\Mod^{\gr}(E^*)}(X,Y[1])=\Hom_{\Mod^{\gr}(E^*)}(X,Y)[1]$;
\item $\Hom_{\Mod^{\gr}(k)}(Y \otimes_{E^*} X,Z)\simeq \Hom_{\Mod^{\gr}(E^*)}(Y, \Hom_{\Mod^{\gr}(k)}(X,Z)).$
\end{itemize}
In particular
\begin{align*}
\Hom_{\Mod^{\gr}(E^*)}(M, \Delta_{\gr}(E^*))&=\Hom_{\Mod^{\gr}(E^*)}(M, \Hom_{\Mod^{\gr}(k)}(E^*,k)[-d]) \\
&=\Hom_{\Mod^{\gr}(E^*)}(M, \Hom_{\Mod^{\gr}(k)}(E^*,k))[-d] \\
&\simeq \Hom_{\Mod^{\gr}(k)}(M \otimes_{E^*} E^*,k)[-d] \\
&\simeq \Hom_{\Mod^{\gr}(k)}(M,k)[-d] \\
&=\Delta_{\gr}(M).
\end{align*}
Since $M \rightsquigarrow \Delta_{\gr}(M)$ is clearly an exact functor this shows $\Delta_{\gr}(E^*)$ is an injective object of $\Mod^{\gr}(E^*)$ when endowed with the action in Remark \ref{secondfact}.

\begin{theorem}\label{cohdelta}
Let $M^{\bullet}$ be a left differential graded $H_U^{\bullet}$-module. Then there is a canonical isomorphism of
graded right $E^*$-modules
$$
h^*\big(\Delta(M^{\bullet})\big) \overset{\sim}{\longrightarrow} \Delta_{\gr}\big(h^*(M^{\bullet})\big).
$$
\end{theorem}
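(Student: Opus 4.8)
The plan is to read off $h^*(\Delta(M^\bullet))$ from the formula $\Delta(M^\bullet)=\Hom_{H_U^\bullet}^\bullet(\mathbf{p}M^\bullet,N^\bullet)$ established above, where $N^\bullet:=R\Hom_{\Mod(G)}(\mathbf{X}_U\otimes_k\mathbf{X}_U,k)$, by exploiting that $h^*(N^\bullet)$ is an \emph{injective} object of $\Mod^{\gr}(E^*)$. Recall that $N^\bullet$ carries two commuting left $H_U^\bullet$-actions, through the left and the right tensor factor respectively; the complex $\Hom_{H_U^\bullet}^\bullet$ is formed with respect to the right-factor action, and the left-factor action survives to make $\Delta(M^\bullet)$ a left dg $H_U^\bullet$-module, hence $h^*(\Delta(M^\bullet))$ a graded right $E^*$-module. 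Taking $\mathbf{p}M^\bullet$ to be a semifree $H_U^\bullet$-module resolution of $M^\bullet$, its canonical exhaustive filtration by dg submodules with free subquotients induces a filtration on $\Hom_{H_U^\bullet}^\bullet(\mathbf{p}M^\bullet,N^\bullet)$ and hence (compare \cite{Yek}) a convergent spectral sequence of graded right $E^*$-modules
\[
  E_2^{p,q}=\Ext^p_{\Mod^{\gr}(E^*)}\!\big(h^*(M^\bullet),h^*(N^\bullet)\big)^q\ \Longrightarrow\ h^{p+q}\big(\Delta(M^\bullet)\big),
\]
in which the $\Ext$ is computed against the right $E^*$-structure on $h^*(N^\bullet)$ coming from the right-factor action, while the $E^*$-action surviving on the abutment is the one induced by the left-factor action on $N^\bullet$.

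By Corollary \ref{deltaE} together with Remark \ref{secondfact}, $h^*(N^\bullet)=\Ext^*_{\Mod(G)}(\mathbf{X}_U\otimes_k\mathbf{X}_U,k)$ is isomorphic, as a right $E^*\otimes_k E^*$-module, to $\Delta_{\gr}(E^*)$ in such a way that the right-factor structure becomes the action in Remark \ref{secondfact} --- with respect to which we showed, just before the theorem, that $\Delta_{\gr}(E^*)$ is injective in $\Mod^{\gr}(E^*)$ --- while the left-factor structure becomes the multiplication action appearing in the Definition of $\Delta_{\gr}$. Injectivity forces $E_2^{p,q}=0$ for $p\neq 0$, so the spectral sequence collapses to a natural isomorphism of graded right $E^*$-modules $h^*(\Delta(M^\bullet))\xrightarrow{\ \sim\ }\Hom_{\Mod^{\gr}(E^*)}(h^*(M^\bullet),\Delta_{\gr}(E^*))$, the $E^*$-action on the target being induced from the multiplication structure on $\Delta_{\gr}(E^*)$. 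Finally one feeds this into the chain of natural isomorphisms $\Hom_{\Mod^{\gr}(E^*)}(M,\Delta_{\gr}(E^*))\cong\Delta_{\gr}(M)$ displayed just above the theorem, with $M=h^*(M^\bullet)$: since that chain carries the multiplication action on $\Delta_{\gr}(E^*)$ exactly to the defining right $E^*$-structure of $\Delta_{\gr}(-)$, the composite is the asserted isomorphism $h^*(\Delta(M^\bullet))\xrightarrow{\sim}\Delta_{\gr}(h^*(M^\bullet))$, and it is manifestly natural in $M^\bullet$. As a sanity check, for $M^\bullet=H_U^\bullet$ this isomorphism is precisely Corollary \ref{deltaE}.

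The crux is the spectral sequence step: producing it with the correct variance and ensuring convergence for a possibly unbounded $M^\bullet$, and --- the genuinely delicate point --- checking that the $\Ext^p$ on the $E_2$-page is formed against the right-factor (i.e.\ Remark \ref{secondfact}) $E^*$-structure, so that injectivity of $\Delta_{\gr}(E^*)$ applies and annihilates the higher rows, whereas the equivariance that survives on the resulting edge isomorphism is for the left-factor (i.e.\ multiplication) $E^*$-structure. Once these two $E^*$-actions on $h^*(N^\bullet)$ have been matched with Corollary \ref{deltaE} and Remark \ref{secondfact} as above, the degeneration and the remaining identifications are formal.
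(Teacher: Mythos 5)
Your argument is essentially the paper's own proof: the paper also invokes the Eilenberg--Moore spectral sequence $E_2^{s,t}=\Ext^{s,t}_{\Mod^{\gr}(E^*)}(h^*(M^\bullet),\Delta_{\gr}(E^*))\Rightarrow h^{s+t}(\Delta(M^\bullet))$ (citing \cite[Thm.~4.7]{KM}, whose construction is exactly the semifree-filtration argument you spell out), collapses it using the injectivity of $\Delta_{\gr}(E^*)$ with respect to the Remark \ref{secondfact} structure, and identifies the resulting edge map with $\Delta_{\gr}(h^*(M^\bullet))$ via the adjunction chain preceding the theorem. Your explicit bookkeeping of the two commuting $E^*$-structures replaces the paper's appeal to functoriality of the spectral sequence in $N^\bullet$ for the $E^*$-linearity of the edge map, but this is the same proof in substance.
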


\begin{proof}
We invoke the Eilenberg-Moore spectral sequence \cite[Thm.~4.7]{KM} in our setup, using Corollary \ref{deltaE}:
$$
E_2^{s,t}=\Ext_{\Mod^{\gr}(E^*)}^{s,t}(h^*(M^{\bullet}), \Delta_{\gr}(E^*)) \Longrightarrow h^{s+t}(\Delta(M^{\bullet})).
$$
As $\Delta_{\gr}(E^*)$ is injective, $E_2^{s,t}=0$ when $s>0$. It therefore collapses on the $E_2$-page, so the following edge map is an isomorphism
$$
h^t(\Delta(M^{\bullet})) \overset{\sim}{\longrightarrow} \gr^0  h^t(\Delta(M^{\bullet}))\simeq E_{\infty}^{0,t} \simeq \Hom_{\Mod^{\gr}(E^*)}(h^*(M^{\bullet}), \Delta_{\gr}(E^*))^t  \overset{\sim}{\longrightarrow}
\Delta_{\gr}(h^*(M^{\bullet}))^t.
$$
Summing over $t$ yields the isomorphism in \ref{cohdelta}. A close inspection of the proof of \cite[Thm.~4.7]{KM} verifies the Eilenberg-Moore spectral sequence
for $R\Hom_{H_U^{\bullet}}(M^{\bullet},N^{\bullet})$ is functorial in $N^{\bullet}$ (a homomorphism of $H_U^{\bullet}$-modules $N_1^{\bullet}\rightarrow N_2^{\bullet}$ gives a morphism between the corresponding spectral sequences). Consequently the total edge map in \ref{cohdelta} is $E^*$-linear.
\end{proof}

Here is another interpretation and proof of \ref{cohdelta}. Let $V^{\bullet}$ be an object of $D(G)$ and consider its dual $R\underline{\Hom}(V^{\bullet},k)$. There is a natural pairing
$$
\langle\!\langle-,- \rangle \!\rangle: \Ext_{\Mod(G)}^*(\mathbf{X}_U,R\underline{\Hom}(V^{\bullet},k)) \times \Ext_{\Mod(G)}^*(\mathbf{X}_U, V^{\bullet}) \longrightarrow \Ext_{\Mod(G)}^*(\mathbf{X}_U \otimes_k \mathbf{X}_U,k)
$$
defined as follows. Start with morphisms $ \mathbf{X}_U\overset{\alpha}{\rightarrow} R\underline{\Hom}(V^{\bullet},k)[i]$ and $\mathbf{X}_U \overset{\beta}{\rightarrow} V^{\bullet}[j]$ in
$D(G)$. Then $\langle\!\langle \alpha, \beta \rangle \!\rangle$ is defined as $\alpha \otimes \beta$ composed with the adjunction counit
$R\underline{\Hom}(V^{\bullet},k) \otimes V^{\bullet} \overset{c}{\rightarrow} k$. More precisely $\langle\!\langle \alpha, \beta \rangle \!\rangle=c[i+j]\circ(\alpha \otimes \beta)$. Clearly
$\langle\!\langle-,- \rangle \!\rangle$ is $E^* \otimes_k E^*$-linear. Composition with the isomorphism in Corollary \ref{deltaE} therefore gives an $E^*$-linear map
\begin{align*}
\zeta_{V^{\bullet}}: \Ext_{\Mod(G)}^*(\mathbf{X}_U,R\underline{\Hom}(V^{\bullet},k))  & \longrightarrow  \Hom_{\Mod^{\gr}(E^*)}(\Ext_{\Mod(G)}^*(\mathbf{X}_U, V^{\bullet}), \Delta_{\gr}(E^*)) \\
& \overset{\sim}{\longrightarrow} \Delta_{\gr}(\Ext_{\Mod(G)}^*(\mathbf{X}_U, V^{\bullet})).
\end{align*}
This is an isomorphism as we will show next.

\begin{theorem}\label{maindual}
Let $V^{\bullet}$ be a complex of smooth $G$-representations over $k$. Then there is a canonical isomorphism of graded right $E^*$-modules
$$
H^*(U, R\underline{\Hom}(V^{\bullet},k)) \overset{\sim}{\longrightarrow} \Delta_{\gr}(H^*(U,V^{\bullet})).
$$
\end{theorem}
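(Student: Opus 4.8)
The plan is to prove Theorem \ref{maindual} by showing that the map $\zeta_{V^\bullet}$ constructed just above the statement is an isomorphism, and then identifying its source with $H^*(U,R\underline{\Hom}(V^\bullet,k))$ and its target with $\Delta_{\gr}(H^*(U,V^\bullet))$ via the natural identifications $\Ext^*_{\Mod(G)}(\mathbf{X}_U,-) = H^*(U,-)$ coming from $RH^0(U,-)$ and Frobenius reciprocity. The statement is then essentially a more invariant repackaging of Theorem \ref{cohdelta}: under the equivalence $H$ one has $H(V^\bullet) = M^\bullet$ with $h^*(M^\bullet) = H^*(U,V^\bullet)$ as $E^*$-modules, and $\Delta(M^\bullet) = H(R\underline{\Hom}(V^\bullet,k))$, so $h^*(\Delta(M^\bullet)) = H^*(U,R\underline{\Hom}(V^\bullet,k))$; then Theorem \ref{cohdelta} gives the desired isomorphism. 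So the first and cleanest route is simply to invoke Theorem \ref{cohdelta} with $M^\bullet := H(V^\bullet)$ and to check that the resulting isomorphism coincides with $\zeta_{V^\bullet}$ (hence is canonical and natural in $V^\bullet$).

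First I would record the translation dictionary. By the defining formula for $H$ and the commutative square with $RH^0(U,-)$, we have $h^*(H(V^\bullet)) = H^*(U,V^\bullet)$ as graded $k$-vector spaces; the right $E^* = h^*((H_U^\bullet)^{op})$-module structure on $h^*(H(V^\bullet))$ matches the one on $H^*(U,V^\bullet) = \Ext^*_{\Mod(G)}(\mathbf{X}_U,V^\bullet)$ coming from precomposition with $E^* = \Ext^*_{\Mod(G)}(\mathbf{X}_U,\mathbf{X}_U)$. Likewise, by the computation opening section \ref{sec:duality-functor}, $\Delta(H(V^\bullet)) = H(R\underline{\Hom}(V^\bullet,k))$, so taking cohomology gives $h^*(\Delta(H(V^\bullet))) = H^*(U,R\underline{\Hom}(V^\bullet,k))$ with its $E^*$-structure. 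With these identifications in hand, Theorem \ref{cohdelta} applied to $M^\bullet = H(V^\bullet)$ produces exactly a canonical isomorphism of graded right $E^*$-modules $H^*(U,R\underline{\Hom}(V^\bullet,k)) \xrightarrow{\sim} \Delta_{\gr}(H^*(U,V^\bullet))$.

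The one point that genuinely requires argument — and which I expect to be the main obstacle — is verifying that the isomorphism produced by Theorem \ref{cohdelta} (built from the Eilenberg–Moore spectral sequence of \cite{KM} together with Corollary \ref{deltaE}) agrees with the concretely defined map $\zeta_{V^\bullet}$ coming from the pairing $\langle\!\langle-,-\rangle\!\rangle$ and the adjunction counit $R\underline{\Hom}(V^\bullet,k)\otimes_k V^\bullet \xrightarrow{c} k$. To do this I would unwind $\zeta_{V^\bullet}$ on the level of representatives: a class in $\Ext^*_{\Mod(G)}(\mathbf{X}_U,R\underline{\Hom}(V^\bullet,k))$ is, via adjunction, the same as a class in $\Ext^*_{\Mod(G)}(\mathbf{X}_U\otimes_k V^\bullet, k)$; composing with $\mathbf{X}_U\to \mathbf{X}_U\otimes_k\mathbf{X}_U\otimes_k V^\bullet$ and the counit exhibits the pairing $\langle\!\langle-,-\rangle\!\rangle$ as the obvious composition pairing, which is precisely the pairing implicit in the module structure $\Delta(H(V^\bullet)) = R\Hom_{H_U^\bullet}(H(V^\bullet), R\Hom_{\Mod(G)}(\mathbf{X}_U\otimes_k\mathbf{X}_U,k))$ from section \ref{sec:duality-functor}. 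Feeding this through Proposition \ref{prop:onesided-bimod} and the edge-map description in the proof of Theorem \ref{cohdelta}, both maps are seen to be induced by the same underlying chain-level pairing, hence coincide.

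Finally I would note that naturality in $V^\bullet$ is automatic from either description: $\zeta_{V^\bullet}$ is manifestly natural since $\langle\!\langle-,-\rangle\!\rangle$ and the counit $c$ are; alternatively, the Eilenberg–Moore construction is functorial in the module $N^\bullet = R\Hom_{\Mod(G)}(\mathbf{X}_U\otimes_k\mathbf{X}_U,k)$ and in $M^\bullet = H(V^\bullet)$, as already observed at the end of the proof of Theorem \ref{cohdelta}, so the composite isomorphism varies functorially with $V^\bullet$. This completes the proof.
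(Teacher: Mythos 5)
Your route is correct but genuinely different from the paper's. You deduce the statement from Theorem \ref{cohdelta}: applying it to $M^{\bullet}=H(V^{\bullet})$ and using the natural isomorphism $T\circ H\simeq \mathrm{Id}_{D(G)}$, so that $\Delta(H(V^{\bullet}))\simeq H(R\underline{\Hom}(V^{\bullet},k))$ and $h^*(H(V^{\bullet}))=H^*(U,V^{\bullet})$ as right $E^*$-modules, does indeed yield a canonical isomorphism of the asserted form, and this is not circular since Theorem \ref{cohdelta} is proved beforehand (via the Eilenberg--Moore spectral sequence and the injectivity of $\Delta_{\gr}(E^*)$, both resting on Corollary \ref{deltaE}). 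The paper instead proves the theorem \emph{independently} of \ref{cohdelta}, by showing that the explicitly constructed map $\zeta_{V^{\bullet}}$ is an isomorphism: the full subcategory $\mathcal{C}\subseteq D(G)$ where $\zeta_{V^{\bullet}}$ is bijective contains $\mathbf{X}_U$ (this is Corollary \ref{deltaE}), is triangulated by the five lemma, and is closed under arbitrary coproducts because $\mathbf{X}_U$ is compact and $\Ext^i_{\Mod(G)}(\mathbf{X}_U,R\underline{\Hom}(-,k))\simeq \Ext^i_{\Mod(G)}(\mathbf{X}_U\otimes_k -,k)$ turns coproducts into products; then $\mathcal{C}=D(G)$ by the generation result \cite[Prop.~6]{DGA}. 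The paper explicitly frames \ref{maindual} as ``another interpretation and proof of \ref{cohdelta}'', so your argument runs that equivalence in the reverse direction; what you lose is the independent second proof and, more substantively, the concrete identification of the isomorphism with $\zeta_{V^{\bullet}}$: your comparison of the Eilenberg--Moore edge map with the pairing-defined map $\zeta_{V^{\bullet}}$ is only sketched (``both induced by the same chain-level pairing''), and carrying it out honestly is the technical crux of your approach. For the bare statement as quoted this identification is not needed, since naturality in $V^{\bullet}$ already follows from the functoriality of the spectral sequence and of $H$, $T$; but if one wants the theorem in the form ``$\zeta_{V^{\bullet}}$ is an isomorphism'', the paper's d\'evissage argument is the more economical and self-contained route, while yours buys a quicker derivation given \ref{cohdelta} at the cost of that unproven compatibility.
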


\begin{proof}
We constructed the $E^*$-linear map $\zeta_{V^{\bullet}}$ above. For the purpose of this proof, let $\mathcal{C}$ denote the (strictly) full subcategory of $D(G)$ whose objects are the
$V^{\bullet}$ for which $\zeta_{V^{\bullet}}$ is an isomorphism. That is, those $V^{\bullet}$ for which
$$
\zeta_{V^{\bullet}}^i: \Ext_{\Mod(G)}^i(\mathbf{X}_U,R\underline{\Hom}(V^{\bullet},k)) \longrightarrow \Hom_k(\Ext_{\Mod(G)}^{d-i}(\mathbf{X}_U, V^{\bullet}),k)
$$
is bijective for all $i \in \Bbb{Z}$. By Corollary \ref{deltaE} we know $\mathbf{X}_U \in \text{Ob}(\mathcal{C})$. Essentially by the five lemma $\mathcal{C}$ is a triangulated subcategory. It is closed under direct sums. Indeed $\mathbf{X}_U$ is a compact object of $D(G)$ by \cite[Lem.~4]{DGA} so $\Hom_k(\Ext_{\Mod(G)}^{d-i}(\mathbf{X}_U, -),k)$ takes direct sums to direct products. The same is true for the functor $\Ext_{\Mod(G)}^i(\mathbf{X}_U,R\underline{\Hom}(-,k))$ by adjunction:
$$
\Ext_{\Mod(G)}^i(\mathbf{X}_U,R\underline{\Hom}(V^{\bullet},k))\simeq \Ext_{\Mod(G)}^i(\mathbf{X}_U \otimes_k V^{\bullet},k).
$$
By \cite[Prop.~6]{DGA} we conclude that $\mathcal{C}=D(G)$.
\end{proof}

\section{Transfer of the tensor product}\label{sec:product}

We let $C(G)$ denote the Grothendieck abelian category of all unbounded complexes with terms in $\Mod(G)$. As noted in \cite[Sect.~3.1]{ScSc} there is a natural model structure
on $C(G)$ for which
\begin{itemize}
\item the cofibrations are the monomorphisms;
\item the weak equivalences are the quasi-isomorphisms;
\item the fibrations are the epimorphisms with homotopically injective and term-wise injective kernel.
\end{itemize}
We let $\underline{C}(G)$ be the dg-enhancement of $K(G)$. This is the dg-category whose objects are complexes $V^{\bullet}$ and whose morphisms are
$\Hom_{\Mod(G)}^{\bullet}(V^{\bullet},V'^{\bullet})$. The tensor product of complexes can be enriched to a dg-bifunctor $\otimes_k$ on $\underline{C}(G)$ in the sense that
$$
\Hom_{\Mod(G)}^{\bullet}(V^{\bullet},V'^{\bullet}) \otimes_k \Hom_{\Mod(G)}^{\bullet}(W^{\bullet},W'^{\bullet}) \longrightarrow
\Hom_{\Mod(G)}^{\bullet}(V^{\bullet}\otimes_k W^{\bullet},V'^{\bullet}\otimes_k W'^{\bullet})
$$
is a morphism of complexes. In $\underline{C}(G)$ we consider the full dg-subcategory $\underline{I}(G)$ consisting of all fibrant objects (that is, the homotopically injective and term-wise injective complexes). The homotopy category of $\underline{I}(G)$ is naturally equivalent to $D(G)$ via the restriction of $q_G$.

Unless $G$ is compact $\mathcal{I}^{\bullet} \otimes_k \mathcal{I}^{\bullet}$ need not be an injective resolution of $\mathbf{X}_U \otimes_k \mathbf{X}_U$. In order to resolve
$\mathbf{X}_U \otimes_k \mathbf{X}_U$ in a dg-functorial way we make use of the following special case of a theorem of Schn\"u{}rer.

\begin{theorem}\label{enhance}
There exists a dg-functor $\underline{i}: \underline{C}(G) \longrightarrow \underline{I}(G)$ along with a natural transformation
$\phi: \text{Id}\rightarrow (\text{inclusion}) \circ \underline{i}$ such that the morphism $\phi_{V^{\bullet}}: V^{\bullet} \rightarrow \underline{i} (V^{\bullet})$ is a trivial cofibration for all
complexes $V^{\bullet}$. (A trivial cofibration is a monomorphic quasi-isomorphism.)
\end{theorem}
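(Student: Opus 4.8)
The plan is to derive Theorem~\ref{enhance} from Schn\"{u}rer's general construction of dg-functorial fibrant replacement \cite{Schn}, so that the only real work is to recognise our model structure as a suitably nice one and to verify Schn\"{u}rer's hypotheses. First I would record that $\Mod(G)$ is a Grothendieck abelian category, hence so is $C(G)$, and that the model structure of \cite{ScSc} recalled above is the combinatorial injective model structure on $C(G)$ presenting $D(G)$: every object is cofibrant (the map $0 \to V^\bullet$ is always a monomorphism), the cofibrations are precisely the monomorphisms and the weak equivalences precisely the quasi-isomorphisms, and the fibrant objects are precisely the homotopically injective, term-wise injective complexes, i.e.\ the objects of $\underline{I}(G)$. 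Being combinatorial, this model structure already admits an \emph{ordinary} functorial fibrant replacement: factoring $V^\bullet \to 0$ by the small object argument on a set of generating trivial cofibrations yields a functor $V^\bullet \mapsto \underline{i}(V^\bullet)$ with values in fibrant objects together with a natural trivial cofibration $\phi_{V^\bullet}\colon V^\bullet \to \underline{i}(V^\bullet)$. The whole content of the theorem is that this can be carried out as a dg-functor on $\underline{C}(G)$, and that is exactly what \cite{Schn} supplies once the input is in place.

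To apply \cite{Schn} I would check that the dg-enhancement $\underline{C}(G)$ is compatible with the model structure, which amounts to exhibiting $C(G)$ as tensored and cotensored over the monoidal model category $C(k)$ of complexes of $k$-vector spaces (cofibrations again the monomorphisms) in such a way that the action is a Quillen bifunctor, i.e.\ satisfies the pushout-product axiom. The tensoring is the external product $V^\bullet \otimes_k A^\bullet$, with $G$ acting through $V^\bullet$ only; since $- \otimes_k A^\bullet$ preserves all colimits and $C(G)$ is locally presentable, the cotensoring exists by the adjoint functor theorem --- there is no closed formula for it, because of smoothness, but we do not need one. The pushout-product axiom is where the hypothesis that $k$ is a \emph{field} does all the work: $\otimes_k$ is biexact, hence preserves monomorphisms, and it preserves quasi-isomorphisms by K\"{u}nneth; it then follows in the usual way that the pushout-product of two monomorphisms is a monomorphism with cokernel the external product of the two cokernels, and in particular is a quasi-isomorphism as soon as one of the two factors is. Thus Schn\"{u}rer's hypotheses are met.

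With this input, \cite{Schn} produces a dg-functor $\underline{i}\colon \underline{C}(G) \to \underline{C}(G)$ with values in fibrant objects and a natural transformation $\phi\colon \operatorname{Id} \to \underline{i}$ whose components $\phi_{V^\bullet}$ are trivial cofibrations. Since the values lie in the full dg-subcategory $\underline{I}(G)$ we may view $\underline{i}$ as a dg-functor $\underline{C}(G) \to \underline{I}(G)$ and $\phi$ as a natural transformation $\operatorname{Id} \to (\text{inclusion}) \circ \underline{i}$; and by the description of the model structure above a trivial cofibration is the same thing as a monomorphic quasi-isomorphism, which is the last assertion. I do not expect a genuine obstacle: all the difficulty --- turning the transfinite small-object-argument factorisation into an honest dg-functor --- is already resolved in \cite{Schn}, and its hypotheses are painless to verify precisely because the coefficient ring is a field, so the only point requiring some care is the bookkeeping that matches our data (in particular the existence, though not the shape, of the cotensor) with the input demanded by \cite{Schn}.
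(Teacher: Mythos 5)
Your proposal is correct and takes essentially the same route as the paper: the paper's proof of Theorem \ref{enhance} is simply the citation of \cite[Thm.~3.2]{ScSc}, which is itself deduced from \cite[Thm.~4.3]{Schn}, i.e.\ exactly the Schn\"urer dg-functorial fibrant replacement you invoke. The only difference is that you inline the hypothesis check (the combinatorial injective model structure on $C(G)$ and the compatibility of the $k$-linear enrichment, which is harmless since $\otimes_k$ over a field is exact and preserves quasi-isomorphisms) that the paper delegates to \cite{ScSc}.
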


\begin{proof}
This is \cite[Thm.~3.2]{ScSc} which is deduced from \cite[Thm.~4.3]{Schn}.
\end{proof}

Moreover, after passing to homotopy categories \cite[Lem.~3.3]{ScSc} tells us $[\underline{i}]$ factors uniquely through $q_G$ as in the diagram
\begin{equation*}
  \xymatrix{
    K(G) \ar@{=}[r] \ar[d]_{q_G}& [\underline{C}(G)] \ar[r]^{[\underline{i}]} &  [\underline{I}(G)]  \ar@{^{(}->}[r] & K_{\text{inj}}(G). \\
      D(G) \ar@{-->}[urr]  \ar[urrr]_{\mathbf{i}}&   &  & }
\end{equation*}

Once and for all we fix a pair $(\underline{i},\phi)$ as in Theorem \ref{enhance}. In particular this gives a fibrant replacement
$$
\phi_{\mathcal{I}^{\bullet} \otimes_k \mathcal{I}^{\bullet}}\circ (\kappa \otimes \kappa): \mathbf{X}_U \otimes_k \mathbf{X}_U \longrightarrow \mathcal{I}^{\bullet} \otimes_k \mathcal{I}^{\bullet} \longrightarrow
\underline{i}(\mathcal{I}^{\bullet} \otimes_k \mathcal{I}^{\bullet}).
$$
Furthermore, by dg-functoriality there is a natural homomorphism of differential graded algebras
$$
H_U^{\bullet}\otimes_k H_U^{\bullet} \longrightarrow \End_{\Mod(G)}^{\bullet}\big(\underline{i}(\mathcal{I}^{\bullet} \otimes_k \mathcal{I}^{\bullet})\big)^{\text{op}}.
$$
In other words $\underline{i}(\mathcal{I}^{\bullet} \otimes_k \mathcal{I}^{\bullet})$ is a right dg-module over $H_U^{\bullet}\otimes_k H_U^{\bullet}$. This applies verbatim to $n$-fold tensor products as well and leads to the next key definition.

\begin{definition}\label{dgbi}
For every non-negative integer $n$ we let
$$
H_U^{\bullet}(n)=\Hom_{\Mod(G)}^{\bullet}\big(\mathcal{I}^{\bullet},\underline{i}(\underbrace{\mathcal{I}^{\bullet} \otimes_k \cdots \otimes_k \mathcal{I}^{\bullet}}_{n})\big).
$$
This is a dg $(H_U^{\bullet}, \underbrace{H_U^{\bullet}\otimes_k \cdots \otimes_k H_U^{\bullet}}_{n})$-bimodule. By convention $H_U^{\bullet}(0)=
\Hom_{\Mod(G)}^{\bullet}\big(\mathcal{I}^{\bullet},\underline{i}(k)\big)$.
\end{definition}

\begin{remark}
Note that $q_H(H_U^{\bullet}(0))\simeq \mathbf{U}$ is the unit object in $D(H_U^{\bullet})$. Also, there is a canonical quasi-isomorphism of
dg $(H_U^{\bullet},H_U^{\bullet})$-bimodules $H_U^{\bullet}\overset{\sim}{\longrightarrow} H_U^{\bullet}(1)$ obtained by composition with $\phi_{\mathcal{I}^{\bullet}}$.
The next bimodule $H_U^{\bullet}(2)$ will play a crucial role in our description of the monoidal structure on $D(H_U^{\bullet})$.
\end{remark}

\begin{remark}\label{operad}
The cohomology of $H_U^{\bullet}(n)$ is the graded $(\underbrace{E^*\otimes_k \cdots \otimes_k E^*}_{n},E^*)$-bimodule
$$
h^*\big(H_U^{\bullet}(n)\big)=\Ext_{\Mod(G)}^*(\mathbf{X}_U, \underbrace{\mathbf{X}_U \otimes_k \cdots \otimes_k \mathbf{X}_U}_{n})=: E^*(n).
$$
Altogether they form an operad. See \cite[Rem.~2.3 (i)]{KM} for example.
\end{remark}

By \cite[Prop.~1.5]{DM} applied to $\underline{C}(G)$ there is a right $S_n$-action on $H_U^{\bullet}(n)$ essentially given by permuting the $\mathcal{I}^{\bullet}$-factors and keeping track of Koszul signs. Here we will only need this for $n=2$, in which case it comes down to the involution
$$
\varsigma_*: H_U^{\bullet}(2) \overset{\sim}{\longrightarrow} H_U^{\bullet}(2)
$$
induced by the swap $\varsigma: \mathcal{I}^{\bullet}\otimes_k \mathcal{I}^{\bullet} \overset{\sim}{\longrightarrow} \mathcal{I}^{\bullet}\otimes_k \mathcal{I}^{\bullet}$ (which involves a Koszul sign). Obviously $\varsigma_*$ is left $H_U^{\bullet}$-linear, and the two right $H_U^{\bullet}$-module structures on $H_U^{\bullet}(2)$ are intertwined by $\varsigma_*$.

The functor $q_H$ has a fully faithful left adjoint functor $\mathbf{p}$ with essential image $K_{\text{pro}}(H_U^{\bullet})$. It comes with natural transformations
\begin{itemize}
\item $\mathbf{p} \circ q_H \longrightarrow \text{Id}_{K(H_U^{\bullet})}$;
\item $\text{Id}_{D(H_U^{\bullet})} \overset{\sim}{\longrightarrow} q_H \circ \mathbf{p}$.
\end{itemize}
Note that $(\mathbf{p} \circ q_H)(X^{\bullet}) \longrightarrow X^{\bullet}$ is a quasi-isomorphism for all dg $H_U^{\bullet}$-modules $X^{\bullet}$. In particular the latter map is an isomorphism in $K(H_U^{\bullet})$ if $X^{\bullet}$ is homotopically projective.

\begin{definition}
Let $M^{\bullet}$ and $N^{\bullet}$ be objects of $D(H_U^{\bullet})$. Their tensor product is
$$
M^{\bullet} \boxtimes N^{\bullet}=q_H\bigg(H_U^{\bullet}(2) \otimes_{H_U^{\bullet}\otimes_k H_U^{\bullet}} (\mathbf{p}M^{\bullet}\otimes_k  \mathbf{p}N^{\bullet})\bigg).
$$
Analogously for morphisms. This defines a bifunctor $\boxtimes: D(H_U^{\bullet})\times D(H_U^{\bullet}) \longrightarrow D(H_U^{\bullet})$.
\end{definition}

Note that $\mathbf{p}M^{\bullet}\otimes_k  \mathbf{p}N^{\bullet}$ belongs to $K_{\text{pro}}(H_U^{\bullet}\otimes_k H_U^{\bullet})$ by tensor-hom adjunction.

\begin{proposition}
There are functorial isomorphisms $M^{\bullet} \boxtimes N^{\bullet} \overset{\sim}{\longrightarrow} N^{\bullet} \boxtimes M^{\bullet}$.
\end{proposition}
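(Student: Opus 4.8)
The plan is to assemble the commutativity isomorphism from two symmetries that are already in place: the involution $\varsigma_*$ on $H_U^{\bullet}(2)$ recalled just above, and the Koszul-sign commutativity of the bifunctor $\otimes_k$ on complexes of $k$-vector spaces. First recall the Koszul swap $\tau_{X^{\bullet},Y^{\bullet}}: X^{\bullet}\otimes_k Y^{\bullet} \xrightarrow{\sim} Y^{\bullet}\otimes_k X^{\bullet}$, $x\otimes y \mapsto (-1)^{|x||y|} y\otimes x$, which is natural in both arguments and satisfies $\tau^2=\id$. Specialising to $X^{\bullet}=\mathbf{p}M^{\bullet}$, $Y^{\bullet}=\mathbf{p}N^{\bullet}$, regarded as left dg $H_U^{\bullet}\otimes_k H_U^{\bullet}$-modules through the first, resp.\ second, tensor factor, a check of signs shows that $\tau$ is $H_U^{\bullet}\otimes_k H_U^{\bullet}$-linear once its target is given the module structure twisted by the graded algebra automorphism $\mathrm{sw}: H_U^{\bullet}\otimes_k H_U^{\bullet} \to H_U^{\bullet}\otimes_k H_U^{\bullet}$, $a\otimes b \mapsto (-1)^{|a||b|} b\otimes a$. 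On the other side, the involution $\varsigma_*: H_U^{\bullet}(2)\xrightarrow{\sim}H_U^{\bullet}(2)$ is left $H_U^{\bullet}$-linear and interchanges the two right $H_U^{\bullet}$-factors; phrased the same way, it is an isomorphism of $(H_U^{\bullet},H_U^{\bullet}\otimes_k H_U^{\bullet})$-bimodules from $H_U^{\bullet}(2)$ onto $H_U^{\bullet}(2)$ with its right action twisted by $\mathrm{sw}$.

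Next I tensor these two maps over $H_U^{\bullet}\otimes_k H_U^{\bullet}$. Using the elementary fact that for any graded algebra automorphism $\phi$ of an algebra $A$, any right $A$-module $Q$ and any left $A$-module $P$ the balanced tensor products $Q^{\phi}\otimes_A {}^{\phi}P$ and $Q\otimes_A P$ are canonically equal --- the defining relations coincide after the substitution $a\mapsto\phi(a)$, which is legitimate since $\phi$ is bijective --- the map $\varsigma_*\otimes\tau$ descends to an isomorphism
\begin{equation*}
  H_U^{\bullet}(2) \otimes_{H_U^{\bullet}\otimes_k H_U^{\bullet}}(\mathbf{p}M^{\bullet}\otimes_k \mathbf{p}N^{\bullet}) \xrightarrow{\sim} H_U^{\bullet}(2) \otimes_{H_U^{\bullet}\otimes_k H_U^{\bullet}}(\mathbf{p}N^{\bullet}\otimes_k \mathbf{p}M^{\bullet}) \ ,
\end{equation*}
which is left $H_U^{\bullet}$-linear because $\varsigma_*$ is, i.e.\ a morphism of dg $H_U^{\bullet}$-modules. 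The target is indeed the object computing $N^{\bullet}\boxtimes M^{\bullet}$, since $\mathbf{p}N^{\bullet}\otimes_k \mathbf{p}M^{\bullet}$ again lies in $K_{\text{pro}}(H_U^{\bullet}\otimes_k H_U^{\bullet})$ by tensor-hom adjunction. Applying $q_H$ yields the desired isomorphism $M^{\bullet}\boxtimes N^{\bullet} \xrightarrow{\sim} N^{\bullet}\boxtimes M^{\bullet}$ in $D(H_U^{\bullet})$.

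Functoriality in $M^{\bullet}$ and $N^{\bullet}$ is then immediate: $\mathbf{p}$, $\otimes_k$ and $q_H$ are functors, $\tau$ is a natural transformation, and $\varsigma_*$ is a fixed morphism, so the composite is natural in each variable; moreover $\varsigma_*^2=\id$ and $\tau^2=\id$ show the isomorphism squares to the identity. I expect the only genuinely delicate point to be the Koszul-sign bookkeeping verifying that $\tau$ is linear for the $\mathrm{sw}$-twisted $H_U^{\bullet}\otimes_k H_U^{\bullet}$-action and, symmetrically, that the twist realised by $\varsigma_*$ is exactly the same $\mathrm{sw}$ --- this is what guarantees that $\varsigma_*\otimes\tau$ is well defined on the balanced tensor product. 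Everything else is formal. (If coherence of $\boxtimes$ as a symmetric monoidal structure is needed later, one additionally has to check compatibility with the associativity and unit constraints, but the bare existence statement requires only the construction above.)
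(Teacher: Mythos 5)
Your argument is essentially the paper's own proof: the paper also takes the Koszul symmetry $s$ on $\mathbf{p}M^{\bullet}\otimes_k\mathbf{p}N^{\bullet}$ together with $\varsigma_*$ on $H_U^{\bullet}(2)$, verifies the sign identity $\varsigma_*(c(a\otimes b))=(-1)^{\deg(a)\deg(b)}\varsigma_*(c)(b\otimes a)$, and concludes that $\varsigma_*\otimes_k s$ descends to the balanced tensor product before applying $q_H$. Your packaging of the descent via the graded swap automorphism of $H_U^{\bullet}\otimes_k H_U^{\bullet}$ is just a cleaner bookkeeping of the ``easy computation'' the paper omits, so the proposal is correct and follows the same route.
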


\begin{proof}
Start with the symmetry for complexes
\begin{align*}
  s : \mathbf{p}M^{\bullet}\otimes_k  \mathbf{p}N^{\bullet} &  \overset{\sim}{\longrightarrow} \mathbf{p}N^{\bullet}\otimes_k  \mathbf{p}M^{\bullet} \\
                 x\otimes y & \longmapsto (-1)^{\deg(x)\deg(y)}\cdot (y \otimes x) \ .
\end{align*}
One checks $\varsigma_*\otimes_k s$ factors through the tensor product over $H_U^{\bullet}\otimes_k H_U^{\bullet}$ by observing that
$$
\varsigma \circ (a \otimes b)=(-1)^{\deg(a)\deg(b)}\cdot (b \otimes a) \circ \varsigma
$$
for homogeneous $a,b \in H_U^{\bullet}$, where $a \otimes b$ and $b \otimes a$ denote the induced endomorphisms of $\mathcal{I}^{\bullet}\otimes_k \mathcal{I}^{\bullet}$ in
$\underline{C}(G)$. Indeed, upon applying $\underline{i}$ to the previous equation, we deduce that
$$
\varsigma_*(c(a \otimes b))=(-1)^{\deg(a)\deg(b)}\cdot \varsigma_*(c)(b \otimes a)
$$
for $c \in H_U^{\bullet}(2)$. The claim that $\varsigma_*\otimes_k s$ factors through $H_U^{\bullet}(2) \otimes_{H_U^{\bullet}\otimes_k H_U^{\bullet}} (\mathbf{p}M^{\bullet}\otimes_k  \mathbf{p}N^{\bullet})$ now follows from an easy computation which we omit.
\end{proof}

The only way we know how to show $\boxtimes$ is associative is to relate it to $\otimes_k$ on $D(G)$ via the equivalence $H$.

\begin{theorem}\label{tensor}
Let $V^{\bullet}$ and $W^{\bullet}$ be objects of $D(G)$. Then there is a natural isomorphism
$$
H(V^{\bullet}) \boxtimes H(W^{\bullet}) \overset{\sim}{\longrightarrow} H(V^{\bullet}\otimes_k W^{\bullet})
$$
in $D(H_U^{\bullet})$.
\end{theorem}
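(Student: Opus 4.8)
The plan is to exhibit a natural transformation $\eta_{V^\bullet,W^\bullet}\colon H(V^\bullet)\boxtimes H(W^\bullet)\longrightarrow H(V^\bullet\otimes_k W^\bullet)$ of bifunctors $D(G)\times D(G)\to D(H_U^\bullet)$ and then to prove it is an isomorphism by a two-step d\'evissage, using that $\mathbf{X}_U$ is a compact generator of $D(G)$ --- so that, by \cite[Prop.~6]{DGA}, every strictly full triangulated subcategory of $D(G)$ that is closed under direct sums and contains $\mathbf{X}_U$ is all of $D(G)$.

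To build $\eta$, write $J_V^\bullet:=\mathbf{i}V^\bullet$ and $J_W^\bullet:=\mathbf{i}W^\bullet$, so $H(V^\bullet)=q_H(\Hom_{\Mod(G)}^\bullet(\mathcal{I}^\bullet,J_V^\bullet))$ and likewise for $W^\bullet$. Combine the external cup product of the dg-bifunctor $\otimes_k$ on $\underline{C}(G)$ with the dg-functoriality of $\underline{i}$: an element $c\in H_U^\bullet(2)=\Hom_{\Mod(G)}^\bullet(\mathcal{I}^\bullet,\underline{i}(\mathcal{I}^\bullet\otimes_k\mathcal{I}^\bullet))$ together with $f\in\Hom_{\Mod(G)}^\bullet(\mathcal{I}^\bullet,J_V^\bullet)$ and $g\in\Hom_{\Mod(G)}^\bullet(\mathcal{I}^\bullet,J_W^\bullet)$ is sent to $\psi\circ\underline{i}(f\otimes g)\circ c\colon\mathcal{I}^\bullet\to\mathbf{i}(V^\bullet\otimes_k W^\bullet)$. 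Here $J_V^\bullet\otimes_k J_W^\bullet$ is a resolution of $V^\bullet\otimes_k W^\bullet$ (tensor over $k$ preserves quasi-isomorphisms), $\underline{i}(J_V^\bullet\otimes_k J_W^\bullet)$ is a homotopically injective one, and $\psi\colon\underline{i}(J_V^\bullet\otimes_k J_W^\bullet)\xrightarrow{\ \simeq\ }\mathbf{i}(V^\bullet\otimes_k W^\bullet)$ is the homotopy equivalence of homotopically injective resolutions, unique up to homotopy, compatible with the resolution maps. A routine Koszul-sign check shows this assignment is a chain map that is balanced for the right $H_U^\bullet\otimes_k H_U^\bullet$-action on $H_U^\bullet(2)$ and the left actions on the two Hom-factors, hence descends to a morphism of dg $H_U^\bullet$-modules $H_U^\bullet(2)\otimes_{H_U^\bullet\otimes_k H_U^\bullet}\big(\Hom_{\Mod(G)}^\bullet(\mathcal{I}^\bullet,J_V^\bullet)\otimes_k\Hom_{\Mod(G)}^\bullet(\mathcal{I}^\bullet,J_W^\bullet)\big)\to H(V^\bullet\otimes_k W^\bullet)$. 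Precomposing with the quasi-isomorphisms $\mathbf{p}H(V^\bullet)\to\Hom_{\Mod(G)}^\bullet(\mathcal{I}^\bullet,J_V^\bullet)$ and $\mathbf{p}H(W^\bullet)\to\Hom_{\Mod(G)}^\bullet(\mathcal{I}^\bullet,J_W^\bullet)$ and applying $q_H$ defines $\eta_{V^\bullet,W^\bullet}$. Since $\mathbf{i}$, $\mathbf{p}$, $\underline{i}$ are functors and any two functors $D(G)\times D(G)\to K_{\inj}(G)$ representing $(V^\bullet,W^\bullet)\mapsto V^\bullet\otimes_k W^\bullet$ (through the equivalence $K_{\inj}(G)\simeq D(G)$) are canonically homotopy equivalent, $\psi$ may be chosen naturally and $\eta$ is a natural transformation of bifunctors, compatible with shifts.

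For the base case $V^\bullet=W^\bullet=\mathbf{X}_U$, note that $\mathcal{I}^\bullet$ being term-wise injective and bounded below we may take $J_{\mathbf{X}_U}^\bullet=\mathcal{I}^\bullet$; then $\Hom_{\Mod(G)}^\bullet(\mathcal{I}^\bullet,\mathcal{I}^\bullet)=(H_U^\bullet)^{\mathrm{op}}$ is isomorphic, as a left $H_U^\bullet$-module, to the free module $H_U^\bullet$, which is homotopically projective, so $\mathbf{p}H(\mathbf{X}_U)\cong H_U^\bullet$ in $K(H_U^\bullet)$. Hence $H(\mathbf{X}_U)\boxtimes H(\mathbf{X}_U)\cong q_H\big(H_U^\bullet(2)\otimes_{H_U^\bullet\otimes_k H_U^\bullet}(H_U^\bullet\otimes_k H_U^\bullet)\big)=q_H(H_U^\bullet(2))$, and under this identification $\eta_{\mathbf{X}_U,\mathbf{X}_U}$ is $q_H$ of post-composition with $\psi$ (using $\underline{i}(\mathrm{id}\otimes\mathrm{id})=\mathrm{id}$), a homotopy equivalence. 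Thus $\eta_{\mathbf{X}_U,\mathbf{X}_U}$ is an isomorphism in $D(H_U^\bullet)$.

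Finally the d\'evissage. Both bifunctors $(V^\bullet,W^\bullet)\mapsto H(V^\bullet)\boxtimes H(W^\bullet)$ and $(V^\bullet,W^\bullet)\mapsto H(V^\bullet\otimes_k W^\bullet)$ are triangulated and commute with arbitrary direct sums in each variable: for the first this uses that $\mathbf{p}$ is a left adjoint (hence preserves coproducts), that $\otimes_k$ over the field $k$ is exact and preserves coproducts, and that $H_U^\bullet(2)\otimes_{H_U^\bullet\otimes_k H_U^\bullet}(-)$ does likewise on homotopically projective modules; for the second it uses that $H$ is an equivalence and $\otimes_k$ is as above. Fix $W^\bullet=\mathbf{X}_U$; by the triangulated five lemma and these properties, the strictly full subcategory of all $V^\bullet\in D(G)$ with $\eta_{V^\bullet,\mathbf{X}_U}$ an isomorphism is triangulated and closed under direct sums, and it contains $\mathbf{X}_U$ by the base case, hence equals $D(G)$ by \cite[Prop.~6]{DGA}. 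Now fix an arbitrary $V^\bullet\in D(G)$; the subcategory of all $W^\bullet$ with $\eta_{V^\bullet,W^\bullet}$ an isomorphism is again triangulated and closed under direct sums, and it contains $\mathbf{X}_U$ by the previous step, hence equals $D(G)$. Therefore $\eta_{V^\bullet,W^\bullet}$ is an isomorphism for all $V^\bullet,W^\bullet$. I expect the main obstacle to be the construction in the second paragraph: checking that the cup/composition assignment is a chain map with the correct Koszul signs, that it is balanced so that it descends to the tensor product over $H_U^\bullet\otimes_k H_U^\bullet$, and that $\eta$ --- together with the auxiliary equivalence $\psi$ --- is natural enough in both variables for the localizing-subcategory argument to apply; the base case and the two-step d\'evissage are then routine.
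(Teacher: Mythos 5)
Your proposal is correct and follows essentially the same route as the paper: the map $c\otimes\alpha\otimes\beta\mapsto\underline{i}(\alpha\otimes\beta)\circ c$ precomposed with the counit $\mathbf{p}\circ q_H\to\mathrm{Id}$, the base case via $\mathbf{p}H(\mathbf{X}_U)\simeq H_U^{\bullet}$, and the two-step d\'evissage over the compact generator $\mathbf{X}_U$ using \cite[Prop.~6]{DGA} all match. The one obstacle you flag---choosing the comparison $\psi\colon\underline{i}(\mathbf{i}V^{\bullet}\otimes_k\mathbf{i}W^{\bullet})\to\mathbf{i}(V^{\bullet}\otimes_k W^{\bullet})$ naturally---is dissolved in the paper by the observation (from the factorization $[\underline{i}]=\mathbf{i}\circ q_G$ of \cite[Lem.~3.3]{ScSc}) that $\underline{i}(\mathbf{i}V^{\bullet}\otimes_k\mathbf{i}W^{\bullet})$ \emph{is} $\mathbf{i}(V^{\bullet}\otimes_k W^{\bullet})$, so no auxiliary resolution comparison is needed.
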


\begin{proof}
First, recall that $V^{\bullet}\otimes_k W^{\bullet}$ is defined as $q_G(\mathbf{i}V^{\bullet}\otimes_k \mathbf{i}W^{\bullet})$. Moreover, by the diagram after Theorem \ref{enhance}
we know that $\underline{i}(C^{\bullet})=(\mathbf{i}\circ q_G)(C^{\bullet})$ for all complexes $C^{\bullet}$ in $C(G)$. Applying this observation to the complex
$C^{\bullet}=\mathbf{i}V^{\bullet}\otimes_k \mathbf{i}W^{\bullet}$ we find that
\begin{align*}
H(V^{\bullet}\otimes_k W^{\bullet})  &=q_H\big(\Hom_{\Mod(G)}^{\bullet}(\mathcal{I}^{\bullet},\mathbf{i}(V^{\bullet}\otimes_k W^{\bullet}))\big) \\
&= q_H\big(\Hom_{\Mod(G)}^{\bullet}(\mathcal{I}^{\bullet},(\mathbf{i}\circ q_G)(\mathbf{i}V^{\bullet}\otimes_k \mathbf{i}W^{\bullet}))\big) \\
&=q_H\big(\Hom_{\Mod(G)}^{\bullet}(\mathcal{I}^{\bullet},\underline{i}(\mathbf{i}V^{\bullet}\otimes_k \mathbf{i}W^{\bullet}))\big).
\end{align*}
The adjunction counit $\mathbf{p} \circ q_H \longrightarrow \text{Id}_{K(H_U^{\bullet})}$ gives a map
\begin{align*}
H(V^{\bullet}) \boxtimes H(W^{\bullet})&=q_H\bigg(H_U^{\bullet}(2) \otimes_{H_U^{\bullet}\otimes_k H_U^{\bullet}} (\mathbf{p}H(V^{\bullet})\otimes_k  \mathbf{p}H(W^{\bullet}))\bigg) \\
&\longrightarrow q_H\bigg(H_U^{\bullet}(2) \otimes_{H_U^{\bullet}\otimes_k H_U^{\bullet}} \Hom_{\Mod(G)}^{\bullet}(\mathcal{I}^{\bullet}, \mathbf{i}V^{\bullet}) \otimes_k  \Hom_{\Mod(G)}^{\bullet}(\mathcal{I}^{\bullet}, \mathbf{i}W^{\bullet})\bigg)
\end{align*}
which we compose with the following map: Given a pair of morphisms $\alpha \in \Hom_{\Mod(G)}^{\bullet}(\mathcal{I}^{\bullet}, \mathbf{i}V^{\bullet})$ and
$\beta \in \Hom_{\Mod(G)}^{\bullet}(\mathcal{I}^{\bullet}, \mathbf{i}W^{\bullet})$ along with a $c \in H_U^{\bullet}(2)$ we form $\alpha \otimes \beta$ and precompose
$\underline{i}(\alpha \otimes \beta)$ with $c$ in the dg-category $\underline{C}(G)$. This gives a map
$$
q_H\bigg(H_U^{\bullet}(2) \otimes_{H_U^{\bullet}\otimes_k H_U^{\bullet}} \Hom_{\Mod(G)}^{\bullet}(\mathcal{I}^{\bullet}, \mathbf{i}V^{\bullet}) \otimes_k  \Hom_{\Mod(G)}^{\bullet}(\mathcal{I}^{\bullet}, \mathbf{i}W^{\bullet})\bigg)  \longrightarrow H(V^{\bullet}\otimes_k W^{\bullet})
$$
by sending $c \otimes \alpha \otimes \beta \mapsto \underline{i}(\alpha \otimes \beta) \circ c$. We need to show the resulting map
$$
\psi_{V^{\bullet},W^{\bullet}}: H(V^{\bullet}) \boxtimes H(W^{\bullet}) \longrightarrow H(V^{\bullet}\otimes_k W^{\bullet})
$$
is an isomorphism in $D(H_U^{\bullet})$. We verify this in increasing generality starting from the case $V^{\bullet}=W^{\bullet}=\mathbf{X}_U$. Let us take
$\mathcal{I}^{\bullet}=\mathbf{i}\mathbf{X}_U$ for simplicity (in general they are homotopy equivalent). Then
$$
\mathbf{p}H(\mathbf{X}_U)=(\mathbf{p} \circ q_H)\big(\Hom_{\Mod(G)}^{\bullet}(\mathcal{I}^{\bullet}, \mathbf{i}\mathbf{X}_U)\big)=(\mathbf{p} \circ q_H)(H_U^{\bullet})
\overset{\sim}{\longrightarrow} H_U^{\bullet}
$$
since $H_U^{\bullet}$ is obviously homotopically projective. Therefore $\psi_{\mathbf{X}_U, \mathbf{X}_U}$ is a composition of isomorphisms
$$
H(\mathbf{X}_U) \boxtimes H(\mathbf{X}_U) \overset{\sim}{\longrightarrow} q_H(H_U^{\bullet}(2)) \overset{\sim}{\longrightarrow} H(\mathbf{X}_U\otimes_k \mathbf{X}_U).
$$
As an intermediate step we introduce the strictly full subcategory $\mathcal{C}$ of $D(G)$ whose objects are the $V^{\bullet}$ for which $\psi_{V^{\bullet},\mathbf{X}_U}$ is an isomorphism. We have just verified that $\mathbf{X}_U$ is an object of $\mathcal{C}$. Since $H$ and $(-)\boxtimes H(\mathbf{X}_U)$ are triangulated functors the 2-out-of-3 property
\cite[Prop.~1.1.20]{Nee} shows $\mathcal{C}$ is a triangulated subcategory. By Lemma \ref{distribute} below $\mathcal{C}$ is furthermore closed under all coproducts. Therefore
$\mathcal{C}=D(G)$ by \cite[Prop.~6]{DGA}.

Finally fix an arbitrary $V^{\bullet}$ and consider the strictly full subcategory $\mathcal{D}_{V^{\bullet}}$ of $D(G)$ with objects $W^{\bullet}$ for which
$\psi_{V^{\bullet},W^{\bullet}}$ is an isomorphism. The above intermediate step shows $\mathbf{X}_U$ is an object of $\mathcal{D}_{V^{\bullet}}$, and the same reasoning allows us to conclude $\mathcal{D}_{V^{\bullet}}=D(G)$ as desired.
\end{proof}

The following observation was used in the previous proof.

\begin{lemma}\label{distribute}
The category $D(H_U^{\bullet})$ has arbitrary coproducts. Let $(M_i^{\bullet})_{i\in I}$ be a collection of objects from $D(H_U^{\bullet})$. Then
\begin{itemize}
\item[(i)] $\bigoplus_{i \in I}\mathbf{p}(M_i^{\bullet})\overset{\sim}{\longrightarrow}\mathbf{p}(\bigoplus_{i \in I} M_i^{\bullet})$;
\item[(ii)] There are functorial isomorphisms
$$
\bigoplus_{i \in I} (M_i^{\bullet}\boxtimes N^{\bullet}) \overset{\sim}{\longrightarrow} (\bigoplus_{i \in I} M_i^{\bullet})\boxtimes N^{\bullet};
$$
\item[(iii)] The functor $H$ and its quasi-inverse $T$ preserve arbitrary coproducts.
\end{itemize}
\end{lemma}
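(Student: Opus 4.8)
The plan is to reduce the whole lemma to a single observation: an arbitrary direct sum $\bigoplus_{i\in I}P_i^{\bullet}$ of homotopically projective dg $H_U^{\bullet}$-modules is again homotopically projective, because for acyclic $A^{\bullet}$ one has $\Hom_{K(H_U^{\bullet})}(\bigoplus_i P_i^{\bullet},A^{\bullet})=\prod_i\Hom_{K(H_U^{\bullet})}(P_i^{\bullet},A^{\bullet})=0$. Granting this, I would first record that $D(H_U^{\bullet})$ has arbitrary coproducts and that these are computed by applying $q_H$ to the termwise direct sum of dg modules: for homotopically projective $P_i^{\bullet}$ one has
$$
\Hom_{D(H_U^{\bullet})}\big(\textstyle\bigoplus_i P_i^{\bullet},-\big)=\Hom_{K(H_U^{\bullet})}\big(\textstyle\bigoplus_i P_i^{\bullet},-\big)=\textstyle\prod_i\Hom_{K(H_U^{\bullet})}(P_i^{\bullet},-)=\textstyle\prod_i\Hom_{D(H_U^{\bullet})}(P_i^{\bullet},-),
$$
and the general case follows after replacing each $M_i^{\bullet}$ by $\mathbf{p}M_i^{\bullet}$, since termwise direct sums of complexes are exact and hence preserve quasi-isomorphisms.

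For (i) I would argue that $\bigoplus_i\mathbf{p}(M_i^{\bullet})$ is a homotopically projective dg module equipped with a quasi-isomorphism onto $\bigoplus_i M_i^{\bullet}$, hence is a homotopically projective resolution of the coproduct; since $\mathbf{p}(\bigoplus_i M_i^{\bullet})$ is another one and homotopically projective resolutions are unique up to canonical isomorphism in $K(H_U^{\bullet})$, the canonical comparison map is an isomorphism. (Equivalently, $\mathbf{p}$ is a left adjoint of $q_H$ and so preserves the coproducts just constructed.)

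For (ii) I would unwind the definition,
$$
\big(\textstyle\bigoplus_i M_i^{\bullet}\big)\boxtimes N^{\bullet}=q_H\Big(H_U^{\bullet}(2)\otimes_{H_U^{\bullet}\otimes_k H_U^{\bullet}}\big(\mathbf{p}(\textstyle\bigoplus_i M_i^{\bullet})\otimes_k\mathbf{p}N^{\bullet}\big)\Big),
$$
substitute the isomorphism from (i), and then push the direct sum to the outside using that $-\otimes_k\mathbf{p}N^{\bullet}$ and $H_U^{\bullet}(2)\otimes_{H_U^{\bullet}\otimes_k H_U^{\bullet}}-$ commute with arbitrary direct sums (both being left adjoints) and descend to the relevant homotopy categories, so they carry the isomorphism from (i) to an isomorphism. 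Comparing the result with $\bigoplus_i(M_i^{\bullet}\boxtimes N^{\bullet})=q_H\big(\bigoplus_i(H_U^{\bullet}(2)\otimes_{H_U^{\bullet}\otimes_k H_U^{\bullet}}(\mathbf{p}M_i^{\bullet}\otimes_k\mathbf{p}N^{\bullet}))\big)$ — again invoking the first paragraph for the outer coproduct — gives the asserted isomorphism, and a routine check identifies it with the natural map.

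For (iii), since $H\colon D(G)\xrightarrow{\;\simeq\;}D(H_U^{\bullet})$ is an equivalence of categories, $D(G)$ has arbitrary coproducts (it is the derived category of the Grothendieck abelian category $C(G)$, where coproducts are the termwise direct sums of complexes, these being exact), and $D(H_U^{\bullet})$ has arbitrary coproducts by the first paragraph, both $H$ and its quasi-inverse $T$ automatically preserve coproducts; if an explicit verification for $T$ is wanted, the same bookkeeping as in (ii) applies to $T(M^{\bullet})=q_G(\mathcal{I}^{\bullet}\otimes_{H_U^{\bullet}}\mathbf{p}M^{\bullet})$, using (i), the fact that $\mathcal{I}^{\bullet}\otimes_{H_U^{\bullet}}-$ commutes with direct sums, and that coproducts in $D(G)$ are termwise direct sums. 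I expect the only real difficulty to be bookkeeping: keeping straight in which category each direct sum or tensor product is formed ($\Mod(G)$, dg $H_U^{\bullet}$-modules, or dg $H_U^{\bullet}\otimes_k H_U^{\bullet}$-modules) and that $\mathbf{p}(-)$ may be replaced by any homotopically projective model up to homotopy equivalence; the single substantive input is the closure of homotopically projective dg modules under arbitrary direct sums.
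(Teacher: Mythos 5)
Your proposal is correct and follows essentially the same route as the paper: part (i) via $\mathbf{p}$ being a left adjoint (equivalently, uniqueness of homotopically projective resolutions), part (ii) by unwinding $\boxtimes$, commuting the termwise direct sum past the tensor functors, and using the counit quasi-isomorphisms $(\mathbf{p}\circ q_H)(X^{\bullet})\to X^{\bullet}$, and part (iii) from $H$ being an equivalence. The only cosmetic difference is that you establish coproducts in $D(H_U^{\bullet})$ intrinsically, via closure of homotopically projective dg modules under direct sums, whereas the paper imports their existence from $D(G)$ through the equivalence $H$.
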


\begin{proof}
By \cite[Rem.~2]{DGA} the category $D(G)$ has arbitrary coproducts, and therefore so does the equivalent category $D(H_U^{\bullet})$.  As a left adjoint the functor $\mathbf{p}$ respects arbitrary coproducts so that we have $\bigoplus_{i \in I}\mathbf{p}(M_i^{\bullet})\overset{\sim}{\longrightarrow}\mathbf{p}(\bigoplus_{i \in I} M_i^{\bullet})$ where the source is a direct sum of complexes. Applying $q_H$ we deduce that $\bigoplus_{i \in I} M_i^{\bullet}$ is isomorphic to $q_H(\bigoplus_{i \in I}\mathbf{p}(M_i^{\bullet}))$. As a result
\begin{align*}
\bigoplus_{i \in I} (M_i^{\bullet}\boxtimes N^{\bullet}) &\simeq  q_H\bigg(\bigoplus_{i \in I} \mathbf{p}(M_i^{\bullet}\boxtimes N^{\bullet})\bigg) \\
&= q_H\bigg(\bigoplus_{i \in I} (\mathbf{p} \circ q_H)( H_U^{\bullet}(2) \otimes_{H_U^{\bullet}\otimes_k H_U^{\bullet}} (\mathbf{p}M_i^{\bullet}\boxtimes \mathbf{p}N^{\bullet}))\bigg)  \\
&\simeq q_H\bigg(\bigoplus_{i \in I} H_U^{\bullet}(2) \otimes_{H_U^{\bullet}\otimes_k H_U^{\bullet}} (\mathbf{p}M_i^{\bullet}\otimes_k  \mathbf{p}N^{\bullet}) \bigg) \\
&\simeq (\bigoplus_{i \in I} M_i^{\bullet})\boxtimes N^{\bullet}
\end{align*}
where the third isomorphism comes from the quasi-isomorphisms $(\mathbf{p} \circ q_H)(X^{\bullet}) \longrightarrow X^{\bullet}$ with
$X^{\bullet}=H_U^{\bullet}(2) \otimes_{H_U^{\bullet}\otimes_k H_U^{\bullet}} (\mathbf{p}M_i^{\bullet}\otimes_k  \mathbf{p}N^{\bullet})$. Part (iii) is obvious.
\end{proof}

Passing to cohomology in Theorem \ref{tensor} yields an Eilenberg-Moore spectral sequence:

\begin{corollary}\label{EMSS}
For any two complexes $V^{\bullet}$ and $W^{\bullet}$ of smooth $G$-representations over $k$ there is a convergent $H_U$-equivariant spectral sequence
$$
E_2^{s,t}=\Tor_{E^*\otimes_k E^*}^{s,t}\big(H^*(U,V^{\bullet})\otimes_k H^*(U,W^{\bullet}), E^*(2)\big) \Longrightarrow H^{s+t}(U,V^{\bullet}\otimes_k W^{\bullet}).
$$
\end{corollary}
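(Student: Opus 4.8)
The plan is to read off the spectral sequence by applying $h^*$ to the isomorphism of Theorem~\ref{tensor} and feeding the left-hand side into the Eilenberg--Moore spectral sequence for a derived tensor product of differential graded modules.

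First, the right-hand side is harmless: since the underlying complex of $H(C^\bullet)$ is $RH^0(U,C^\bullet)$, we have $h^*(H(C^\bullet))=H^*(U,C^\bullet)$, so $h^*\bigl(H(V^\bullet\otimes_k W^\bullet)\bigr)=H^*(U,V^\bullet\otimes_k W^\bullet)$. For the left-hand side, recall that by definition
$$H(V^\bullet)\boxtimes H(W^\bullet)=q_H\bigl(H_U^\bullet(2)\otimes_{H_U^\bullet\otimes_k H_U^\bullet}(\mathbf{p}H(V^\bullet)\otimes_k\mathbf{p}H(W^\bullet))\bigr),$$
and that $\mathbf{p}H(V^\bullet)\otimes_k\mathbf{p}H(W^\bullet)$ lies in $K_{\mathrm{pro}}(H_U^\bullet\otimes_k H_U^\bullet)$ (tensor--hom adjunction, as noted after the definition of $\boxtimes$). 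Hence this underived tensor product already computes the derived one, so in $D(H_U^\bullet)$
$$H(V^\bullet)\boxtimes H(W^\bullet)\simeq H_U^\bullet(2)\otimes^{L}_{H_U^\bullet\otimes_k H_U^\bullet}\bigl(H(V^\bullet)\otimes_k H(W^\bullet)\bigr),$$
which is Theorem~B of the introduction.

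Next I would invoke the $\Tor$-analogue of the Eilenberg--Moore spectral sequence for differential graded modules over a differential graded algebra (cf.\ \cite[Thm.~4.7]{KM} for the $\Ext$-version used in the proof of Theorem~\ref{cohdelta}): for a dg algebra $A$, a right dg $A$-module $P^\bullet$ and a left dg $A$-module $Q^\bullet$, the two-sided bar complex $B(P^\bullet,A,Q^\bullet)$ is a model for $P^\bullet\otimes^{L}_A Q^\bullet$ and its bar filtration yields a spectral sequence
$$E_2^{s,t}=\Tor^{s,t}_{h^*(A)}\bigl(h^*(P^\bullet),h^*(Q^\bullet)\bigr)\ \Longrightarrow\ h^{s+t}\bigl(P^\bullet\otimes^{L}_A Q^\bullet\bigr),$$
which converges in our situation. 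I apply this with $A=H_U^\bullet\otimes_k H_U^\bullet$, $P^\bullet=H_U^\bullet(2)$ and $Q^\bullet=H(V^\bullet)\otimes_k H(W^\bullet)$. Since $k$ is a field, the K\"unneth isomorphism identifies $h^*(A)\simeq h^*(H_U^\bullet)\otimes_k h^*(H_U^\bullet)=(E^*)^{\mathrm{op}}\otimes_k(E^*)^{\mathrm{op}}\simeq(E^*\otimes_k E^*)^{\mathrm{op}}$ and, compatibly, $h^*(Q^\bullet)=H^*(U,V^\bullet)\otimes_k H^*(U,W^\bullet)$; by Remark~\ref{operad} we have $h^*(P^\bullet)=E^*(2)$ with its $(E^*\otimes_k E^*,E^*)$-bimodule structure. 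Using the identity $\Tor^R_i(X,Y)\cong\Tor^{R^{\mathrm{op}}}_i(Y,X)$ with $R=(E^*\otimes_k E^*)^{\mathrm{op}}$, the $E_2$-term becomes
$$E_2^{s,t}=\Tor^{s,t}_{E^*\otimes_k E^*}\bigl(H^*(U,V^\bullet)\otimes_k H^*(U,W^\bullet),\,E^*(2)\bigr),$$
while the abutment is $H^*(U,V^\bullet\otimes_k W^\bullet)$ by the first paragraph and Theorem~\ref{tensor}. For the $H_U$-equivariance I would argue as in the proof of Theorem~\ref{cohdelta}: the residual right $E^*$-action on $E^*(2)$ (coming from its left $H_U^\bullet$-action), and in particular the action of the Hecke algebra $H_U=h^0(H_U^\bullet)$, descends to $\Tor$, and a close inspection of the bar construction shows the spectral sequence is functorial in the factor $H_U^\bullet(2)$ as a dg $A$-module; hence all the maps above, and the $E^*$-action on the abutment transported along the $D(H_U^\bullet)$-isomorphism of Theorem~\ref{tensor}, are compatible with this action.

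The main obstacle I expect is the bookkeeping of module structures and opposite rings: one must verify that, after applying $h^*$ and K\"unneth, the left $H_U^\bullet\otimes_k H_U^\bullet$-module $H(V^\bullet)\otimes_k H(W^\bullet)$ really becomes the right $E^*\otimes_k E^*$-module $H^*(U,V^\bullet)\otimes_k H^*(U,W^\bullet)$ appearing in the statement --- including the Koszul signs hidden in the K\"unneth map and in the symmetry of $\boxtimes$ --- and that the $E^*$-action used for the $H_U$-equivariance is the standard one. The other point requiring care is pinning down a clean reference for (or a short self-contained derivation of) the $\Tor$-form of the Eilenberg--Moore spectral sequence, together with its convergence (with the indexing conventions of \cite{KM}) and its functoriality in the first tensor factor.
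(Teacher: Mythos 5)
Your proposal is correct and takes essentially the same route as the paper: apply the Eilenberg--Moore ($\Tor$) spectral sequence of \cite[Thm.~4.7]{KM} (its first half is exactly the $\Tor$-form you were seeking) to $A=H_U^{\bullet}\otimes_k H_U^{\bullet}$, $M=H_U^{\bullet}(2)$, $N=H(V^{\bullet})\otimes_k H(W^{\bullet})$, identify the abutment via Theorem \ref{tensor}, and account for the interchange of inputs coming from $h^*(H_U^{\bullet})=(E^*)^{\mathrm{op}}$. The loose ends you flag are settled in the paper by exactly these means, with strong convergence deduced from \cite[Ch.~XI, Prop.~3.2]{Mac} because the construction rests on a semifree (hence bounded below) resolution.
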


\begin{proof}
Apply the first half of \cite[Thm.~4.7]{KM} to $A=H_U^{\bullet}\otimes_k H_U^{\bullet}$ and the dg-modules $M=H_U^{\bullet}(2)$ and
$N=H(V^{\bullet}) \otimes_k H(W^{\bullet})$ in the notation of loco citato. Then $M \otimes_A^L N$ is isomorphic to our
$H(V^{\bullet}) \boxtimes H(W^{\bullet})$, which by Theorem \ref{tensor} is $H(V^{\bullet}\otimes_k W^{\bullet})$. According to \cite[Def.~4.6]{KM}
the spectral sequence converges to $\Tor_A^*(M,N)=H^*(U,V^{\bullet}\otimes_k W^{\bullet})$. The $E_2$-page is given by
$$
E_2^{s,t}=\Tor_{h^*(A)}^{s,t}(h^*(M),h^*(N))=\Tor_{E^*\otimes_k E^*}^{s,t}(H^*(U,V^{\bullet})\otimes_k H^*(U,W^{\bullet}), E^*(2))
$$
as claimed. The order of the inputs is interchanged since $h^*(H_U^{\bullet})$ is the opposite of $E^*$. The notation $E^*(n)=h^*(H_U^{\bullet}(n))$ was introduced in Remark \ref{operad}. Note also that $s$ is the negative of the homological degree, so
on the initial page $E_2^{s,t}=0$ unless $s \leq 0$.

The (strong) convergence of the $\Tor$-sequence is mentioned right after \cite[Thm.~4.7]{KM}. It can be deduced from \cite[Ch.~XI, Prop.~3.2]{Mac}: The proof in \cite{KM} ultimately comes down to the choice of a semifree resolution, which by definition is bounded below and convergent above in the terminology of \cite{Mac}.
\end{proof}

Here is an obvious reformulation of Theorem \ref{tensor}. Start with $M^{\bullet}$ and $N^{\bullet}$ in $D(H_U^{\bullet})$. Then \ref{tensor} combined with
the natural transformation $\text{Id}_{D(H_U^{\bullet})} \overset{\sim}{\longrightarrow} H \circ T$ give isomorphisms
$$
M^{\bullet} \boxtimes N^{\bullet} \overset{\sim}{\longrightarrow} H(T(M^{\bullet}))\boxtimes H(T(N^{\bullet}))  \overset{\sim}{\longrightarrow} H(T(M^{\bullet})\otimes_k T(N^{\bullet})).
$$
Applying $T$ and invoking the transformation $T \circ H \overset{\sim}{\longrightarrow} \text{Id}_{D(G)}$ yields the isomorphism
$$
T(M^{\bullet} \boxtimes N^{\bullet} )\overset{\sim}{\longrightarrow} T(M^{\bullet})\otimes_k T(N^{\bullet}).
$$
The monoidal structure on $D(H_U^{\bullet})$ can be expressed in terms of these maps:

\begin{itemize}
\item \emph{Unit object}.
$$
M^{\bullet}\boxtimes \mathbf{U}\overset{\sim}{\longrightarrow}H(T(M^{\bullet})\otimes_k T(H(k)))\overset{\sim}{\longrightarrow}
H(T(M^{\bullet})\otimes_k k)\overset{\sim}{\longrightarrow} H(T(M^{\bullet}))\overset{\sim}{\longleftarrow} M^{\bullet}.
$$
\item \emph{Associativity}.
\begin{align*}
(M_1^{\bullet}\boxtimes M_2^{\bullet})\boxtimes M_3^{\bullet} &\overset{\sim}{\longrightarrow} H(T(M_1^{\bullet}\boxtimes M_2^{\bullet})\otimes_k T(M_3^{\bullet})) \\
&\overset{\sim}{\longrightarrow} H((T(M_1^{\bullet})\otimes_k T(M_2^{\bullet}))\otimes_k T(M_3^{\bullet})) \\
& \overset{\sim}{\longrightarrow} H(T(M_1^{\bullet})\otimes_k (T(M_2^{\bullet})\otimes_k T(M_3^{\bullet}))) \\
& \overset{\sim}{\longleftarrow} H(T(M_1^{\bullet})\otimes_k T(M_2^{\bullet}\boxtimes M_3^{\bullet})) \\
& \overset{\sim}{\longleftarrow} M_1^{\bullet}\boxtimes (M_2^{\bullet}\boxtimes M_3^{\bullet}).
\end{align*}
\end{itemize}
We do not have a description of these maps which does not go through $D(G)$ via $H$ and $T$.

\begin{remark}
Our Theorem \ref{tensor} can be strengthened to $n$ complexes $V_1^{\bullet},\ldots, V_n^{\bullet}$ with minor modifications of the proof. We extend $\boxtimes$ to an $n$-fold tensor product as in \cite[Prop.~1.5]{DM} for instance.  Then, if
$M_1^{\bullet},\ldots, M_n^{\bullet}$ are objects of $D(H_U^{\bullet})$, their tensor product can be expressed in terms of $H_U^{\bullet}(n)$ as
$$
M_1^{\bullet} \boxtimes \cdots \boxtimes M_n^{\bullet}\simeq
q_H\bigg(H_U^{\bullet}(n) \otimes_{H_U^{\bullet}\otimes_k \cdots \otimes_k H_U^{\bullet}} (\mathbf{p}M_1^{\bullet}\otimes_k  \cdots \otimes_k \mathbf{p}M_n^{\bullet})\bigg).
$$
\end{remark}

\section{The internal $\Hom$-functor for dg modules}

In this section we transfer $R\underline{\Hom}$ to $D(H_U^{\bullet})$.

\begin{definition}\label{internal}
Let $M^{\bullet}$ and $N^{\bullet}$ be objects of $D(H_U^{\bullet})$. Then
$$
\Hom^{\boxtimes}(M^{\bullet},N^{\bullet})=q_H \Hom_{H_U^{\bullet}}\bigg( \mathbf{p}\circ q_H \bigg[ H_U^{\bullet}(2)\otimes_{H_U^{\bullet}}\mathbf{p}M^{\bullet} \bigg], \mathbf{p}N^{\bullet} \bigg).
$$
\end{definition}

This definition takes a bit of unwinding:

\begin{itemize}
\item The tensor product $H_U^{\bullet}(2)\otimes_{H_U^{\bullet}}\mathbf{p}M$ is with respect to the right $H_U^{\bullet}$-module structure on $H_U^{\bullet}(2)$ arising from the \emph{second} factor in $H_U^{\bullet}\otimes_k H_U^{\bullet}$.
\item When forming $\Hom_{H_U^{\bullet}}$ (introduced in \cite[10.8]{BL}) we view $\mathbf{p}\circ q_H \big[\cdots\big]$ as a left $H_U^{\bullet}$-module through the left
action of $H_U^{\bullet}$ on $H_U^{\bullet}(2)$.
\item The space $\Hom_{H_U^{\bullet}}(\cdots)$ on the right-hand side in \ref{internal} becomes a left $H_U^{\bullet}$-module via the right action of $H_U^{\bullet}$ on
$H_U^{\bullet}(2)$ via the \emph{first} factor in $H_U^{\bullet}\otimes_k H_U^{\bullet}$.
\end{itemize}

\begin{remark}
At least morally $\boxtimes$ is the derived tensor product $H_U^{\bullet}(2) \otimes_{H_U^{\bullet}\otimes_k H_U^{\bullet} }^L M^{\bullet}\otimes_k N^{\bullet}$. Similarly we think of
$\Hom^{\boxtimes}$ as $R\Hom_{H_U^{\bullet}}(H_U^{\bullet}(2)\otimes_{H_U^{\bullet}}^LM^{\bullet}  ,N^{\bullet})$. We have adopted the notation $\boxtimes$ and
$\Hom^{\boxtimes}$ from \cite[Part V, Df.~1.1, Df.~1.2]{KM}.
\end{remark}

\begin{proposition}\label{adjoint}
There are functorial isomorphisms
$$
\Hom_{D(H_U^{\bullet})}(M^{\bullet} \boxtimes N^{\bullet}, R^{\bullet})\simeq
\Hom_{D(H_U^{\bullet})}(M^{\bullet}, \Hom^{\boxtimes}(N^{\bullet}, R^{\bullet})).
$$
\end{proposition}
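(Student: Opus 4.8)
The plan is to pass to homotopically projective models of $M^\bullet$, $N^\bullet$ and $R^\bullet$, so that every derived functor occurring in the two sides becomes an honest functor of differential graded $H_U^\bullet$-modules, and then to obtain the asserted bijection by composing three instances of the (Koszul-signed) tensor--$\Hom$ adjunction for dg modules: one over the algebra $H_U^\bullet\otimes_k H_U^\bullet$ and two over $H_U^\bullet$.

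The starting point is the standard fact that, for a homotopically projective module $P^\bullet$ and any dg module $X^\bullet$, one has $\Hom_{D(H_U^\bullet)}(P^\bullet,X^\bullet)=h^0\big(\Hom_{H_U^\bullet}(P^\bullet,X^\bullet)\big)$. Applied to $P^\bullet=\mathbf{p}M^\bullet$ this reduces the right-hand side of the Proposition to the $h^0$ of $\Hom_{H_U^\bullet}\big(\mathbf{p}M^\bullet,\Hom_{H_U^\bullet}(\mathbf{p}q_H[H_U^\bullet(2)\otimes_{H_U^\bullet}\mathbf{p}N^\bullet],\mathbf{p}R^\bullet)\big)$. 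For the left-hand side one first has to resolve $M^\bullet\boxtimes N^\bullet$, because $H_U^\bullet(2)\otimes_{H_U^\bullet\otimes_kH_U^\bullet}(\mathbf{p}M^\bullet\otimes_k\mathbf{p}N^\bullet)$ need not be homotopically projective. I would therefore fix once and for all a homotopically projective resolution $\widetilde B\xrightarrow{\;\sim\;}H_U^\bullet(2)$ in the category of $(H_U^\bullet,H_U^\bullet\otimes_kH_U^\bullet)$-bimodules. Since $H_U^\bullet\otimes_kH_U^\bullet$ is, as a one-sided $H_U^\bullet$-module, of the form $H_U^\bullet\otimes_k(\text{dg }k\text{-vector space})$ and hence homotopically projective over $H_U^\bullet$, the bimodule $\widetilde B$ is homotopically projective for each of its one-sided $H_U^\bullet$-structures. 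Because $\mathbf{p}M^\bullet\otimes_k\mathbf{p}N^\bullet$ lies in $K_{\pro}(H_U^\bullet\otimes_k H_U^\bullet)$ and $\mathbf{p}N^\bullet$ in $K_{\pro}(H_U^\bullet)$, both are $K$-flat, so replacing $H_U^\bullet(2)$ by $\widetilde B$ changes neither $M^\bullet\boxtimes N^\bullet$ nor $\Hom^\boxtimes(N^\bullet,R^\bullet)$ up to canonical isomorphism in $D(H_U^\bullet)$; moreover $\widetilde B\otimes_{H_U^\bullet\otimes_kH_U^\bullet}(\mathbf{p}M^\bullet\otimes_k\mathbf{p}N^\bullet)$ and $\widetilde B\otimes_{H_U^\bullet}\mathbf{p}N^\bullet$ are now themselves homotopically projective left $H_U^\bullet$-modules, the second of them usable in place of $\mathbf{p}q_H[H_U^\bullet(2)\otimes_{H_U^\bullet}\mathbf{p}N^\bullet]$ inside $\Hom^\boxtimes$.

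The core of the argument is then the following chain of natural isomorphisms of complexes, each of which preserves the residual left $H_U^\bullet$-action coming from the first tensor factor of $H_U^\bullet\otimes_k H_U^\bullet$ acting on $\widetilde B$: (i) $\Hom_{H_U^\bullet}\big(\widetilde B\otimes_{H_U^\bullet\otimes_kH_U^\bullet}(\mathbf{p}M^\bullet\otimes_k\mathbf{p}N^\bullet),\mathbf{p}R^\bullet\big)\cong\Hom_{H_U^\bullet\otimes_kH_U^\bullet}\big(\mathbf{p}M^\bullet\otimes_k\mathbf{p}N^\bullet,\Hom_{H_U^\bullet}(\widetilde B,\mathbf{p}R^\bullet)\big)$ by tensor--$\Hom$ adjunction over $H_U^\bullet$, the inner $\Hom_{H_U^\bullet}$ being formed over the left $H_U^\bullet$-action on $\widetilde B$; (ii) $\Hom_{H_U^\bullet\otimes_kH_U^\bullet}\big(\mathbf{p}M^\bullet\otimes_k\mathbf{p}N^\bullet,\mathcal H^\bullet\big)\cong\Hom_{H_U^\bullet}\big(\mathbf{p}M^\bullet,\Hom_{H_U^\bullet}(\mathbf{p}N^\bullet,\mathcal H^\bullet)\big)$, the iterated adjunction $\Hom_{A\otimes_kB}(X^\bullet\otimes_kY^\bullet,Z^\bullet)\cong\Hom_A(X^\bullet,\Hom_B(Y^\bullet,Z^\bullet))$ for dg modules over the field $k$, applied with $\mathcal H^\bullet=\Hom_{H_U^\bullet}(\widetilde B,\mathbf{p}R^\bullet)$; (iii) $\Hom_{H_U^\bullet}\big(\mathbf{p}N^\bullet,\Hom_{H_U^\bullet}(\widetilde B,\mathbf{p}R^\bullet)\big)\cong\Hom_{H_U^\bullet}\big(\widetilde B\otimes_{H_U^\bullet}\mathbf{p}N^\bullet,\mathbf{p}R^\bullet\big)$, again tensor--$\Hom$ adjunction over $H_U^\bullet$ but now for the \emph{second} right $H_U^\bullet$-action on $\widetilde B$. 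Composing (i)--(iii) and bookkeeping which one-sided structures survive, the leftmost complex computes $\Hom_{D(H_U^\bullet)}(M^\bullet\boxtimes N^\bullet,R^\bullet)$ (via the reduction above together with the homotopical projectivity of $\widetilde B\otimes_{H_U^\bullet\otimes_kH_U^\bullet}(\mathbf{p}M^\bullet\otimes_k\mathbf{p}N^\bullet)$), while the rightmost computes $\Hom_{D(H_U^\bullet)}(M^\bullet,\Hom^\boxtimes(N^\bullet,R^\bullet))$; passing to $h^0$ gives the desired bijection, and functoriality in $M^\bullet$, $N^\bullet$, $R^\bullet$ is automatic because every arrow used is natural.

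I expect the real work to be organisational rather than conceptual: keeping straight the three commuting $H_U^\bullet$-module structures carried by $H_U^\bullet(2)$ (the left one, and the two right ones through the first and second tensor factors) as they pass through the adjunctions (i)--(iii), and controlling the Koszul signs in the dg tensor--$\Hom$ adjunctions (we follow the sign conventions of \cite[10.8]{BL}), so as to be certain that the composite isomorphism is one of \emph{left $H_U^\bullet$-modules}, hence descends to an isomorphism in $D(H_U^\bullet)$, and not merely an isomorphism of the underlying complexes of $k$-vector spaces. The only genuinely delicate point beyond this --- that the tensor product defining $M^\bullet\boxtimes N^\bullet$ is not manifestly homotopically projective, so that $\Hom_{D(H_U^\bullet)}(M^\bullet\boxtimes N^\bullet,-)$ is not literally computed by $\Hom_{H_U^\bullet}$ out of that tensor product --- is precisely what the homotopically projective bimodule replacement $\widetilde B$ of $H_U^\bullet(2)$ is there to absorb.
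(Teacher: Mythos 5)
Your strategy is workable but genuinely different from the paper's, and as written it has one real gap. The paper never resolves $H_U^{\bullet}(2)$ at all: it rewrites $H_U^{\bullet}(2)\otimes_{H_U^{\bullet}\otimes_k H_U^{\bullet}}(\mathbf{p}M^{\bullet}\otimes_k\mathbf{p}N^{\bullet})$ as $\bigl(H_U^{\bullet}(2)\otimes_{H_U^{\bullet}}\mathbf{p}N^{\bullet}\bigr)\otimes_{H_U^{\bullet}}\mathbf{p}M^{\bullet}$, uses the homotopical flatness of $\mathbf{p}M^{\bullet}$ to see that $\mathbf{p}\circ q_H\bigl[H_U^{\bullet}(2)\otimes_{H_U^{\bullet}}\mathbf{p}N^{\bullet}\bigr]\otimes_{H_U^{\bullet}}\mathbf{p}M^{\bullet}$ is a second homotopically projective resolution of that same complex, invokes the unique isomorphism in $K(H_U^{\bullet})$ between the two resolutions --- a comparison that takes place entirely in the first slot of the outer $\Hom$, where only the left $H_U^{\bullet}$-structure is relevant --- and then applies a single tensor--hom adjunction. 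In particular the complex $\mathbf{p}\circ q_H\bigl[H_U^{\bullet}(2)\otimes_{H_U^{\bullet}}\mathbf{p}N^{\bullet}\bigr]$ occurring in the very definition of $\Hom^{\boxtimes}(N^{\bullet},R^{\bullet})$ is never replaced, so no comparison carrying a residual module structure is ever needed.

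The gap in your version is exactly at the point where you declare $\widetilde B\otimes_{H_U^{\bullet}}\mathbf{p}N^{\bullet}$ ``usable in place of'' $\mathbf{p}\circ q_H\bigl[H_U^{\bullet}(2)\otimes_{H_U^{\bullet}}\mathbf{p}N^{\bullet}\bigr]$ inside $\Hom^{\boxtimes}$, justified by the K-flatness of $\mathbf{p}N^{\bullet}$. K-flatness only gives a quasi-isomorphism $\widetilde B\otimes_{H_U^{\bullet}}\mathbf{p}N^{\bullet}\rightarrow H_U^{\bullet}(2)\otimes_{H_U^{\bullet}}\mathbf{p}N^{\bullet}$, and $\Hom_{H_U^{\bullet}}(-,\mathbf{p}R^{\bullet})$ does not send quasi-isomorphisms to quasi-isomorphisms ($\mathbf{p}R^{\bullet}$ is not homotopically injective), so this does not by itself identify $q_H\Hom_{H_U^{\bullet}}(\widetilde B\otimes_{H_U^{\bullet}}\mathbf{p}N^{\bullet},\mathbf{p}R^{\bullet})$ with $\Hom^{\boxtimes}(N^{\bullet},R^{\bullet})$. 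What is true is that both $\widetilde B\otimes_{H_U^{\bullet}}\mathbf{p}N^{\bullet}$ and $\mathbf{p}\circ q_H\bigl[\cdots\bigr]$ are homotopically projective left resolutions of the same complex, hence uniquely isomorphic in $K(H_U^{\bullet})$; but that comparison is only left-linear, whereas $\Hom^{\boxtimes}(N^{\bullet},R^{\bullet})$ is an object of $D(H_U^{\bullet})$ via the residual action coming from the first $H_U^{\bullet}$-factor, and the right-hand side of the Proposition is a Hom-set into that specific object. So you must exhibit an isomorphism in $D(H_U^{\bullet})$, compatible with the residual structure. This is fixable: dominate both models by a common homotopically projective resolution in the bimodule category; the connecting maps are then bimodule quasi-isomorphisms between complexes that are homotopically projective as left modules, so their cones are acyclic and left-contractible, whence $\Hom_{H_U^{\bullet}}(-,\mathbf{p}R^{\bullet})$ turns them into quasi-isomorphisms that are equivariant for the residual action, and applying $q_H$ gives the identification you need. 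With that supplement (and the structure/sign bookkeeping in (i)--(iii), which is indeed routine), your adjunction chain does yield the asserted functorial bijection; the trade-off versus the paper is that your $\widetilde B$ makes all tensor products automatically projective, at the cost of having to re-match the modified $\Hom^{\boxtimes}$ with the one actually defined, which is precisely the comparison the paper's arrangement avoids.
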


\begin{proof}
First observe that the left-hand side can be rewritten as
\begin{align*}
\Hom_{D(H_U^{\bullet})}(M^{\bullet} \boxtimes N^{\bullet}, R^{\bullet}) &\simeq \Hom_{D(H_U^{\bullet})}(M^{\bullet} \boxtimes N^{\bullet}, (q_H \circ \mathbf{p})(R^{\bullet})) \\
& \simeq  \Hom_{K(H_U^{\bullet})}(\mathbf{p} (M^{\bullet} \boxtimes N^{\bullet}), \mathbf{p}R^{\bullet}) \\
& \simeq  \Hom_{K(H_U^{\bullet})}(\mathbf{p} \circ q_H\big[H_U^{\bullet}(2) \otimes_{H_U^{\bullet}\otimes_k H_U^{\bullet}} \mathbf{p}M^{\bullet}\otimes_k  \mathbf{p}N^{\bullet}\big]), \mathbf{p}R^{\bullet}).
\end{align*}
Next note that there is a canonical isomorphism of left $H_U^{\bullet}$-modules
$$
H_U^{\bullet}(2) \otimes_{H_U^{\bullet}\otimes_k H_U^{\bullet}} \mathbf{p}M^{\bullet}\otimes_k  \mathbf{p}N^{\bullet} \overset{\sim}{\longrightarrow}
\big(H_U^{\bullet}(2) \otimes_{H_U^{\bullet}} \mathbf{p}N^{\bullet}\big)\otimes_{H_U^{\bullet}} \mathbf{p}M^{\bullet}.
$$
(The initial tensor product in the target is via the second factor of $H_U^{\bullet}\otimes_k H_U^{\bullet}$. The last tensor product is via the first factor.) By \cite[Cor.~10.12.4.4]{BL}
$\mathbf{p}M^{\bullet}$ is homotopically flat, so
$$
\mathbf{p}\circ q_H \bigg[ H_U^{\bullet}(2)\otimes_{H_U^{\bullet}}\mathbf{p}N^{\bullet} \bigg] \otimes_{H_U^{\bullet}} \mathbf{p}M^{\bullet} \longrightarrow
\big(H_U^{\bullet}(2) \otimes_{H_U^{\bullet}} \mathbf{p}N^{\bullet}\big)\otimes_{H_U^{\bullet}} \mathbf{p}M^{\bullet}
$$
is a homotopically projective resolution of the right-hand side. We thus have a unique isomorphism in $K(H_U^{\bullet})$ between the two resolutions
$$
\mathbf{p}\circ q_H \bigg[ H_U^{\bullet}(2)\otimes_{H_U^{\bullet}}\mathbf{p}N^{\bullet} \bigg] \otimes_{H_U^{\bullet}} \mathbf{p}M^{\bullet} \overset{\sim}{\longrightarrow}
\mathbf{p} \circ q_H\big[H_U^{\bullet}(2) \otimes_{H_U^{\bullet}\otimes_k H_U^{\bullet}} \mathbf{p}M^{\bullet}\otimes_k  \mathbf{p}N^{\bullet}\big].
$$
This allows us to continue our calculation:
\begin{align*}
 \Hom_{D(H_U^{\bullet})}(M^{\bullet} \boxtimes N^{\bullet}, R^{\bullet}) & \simeq
 \Hom_{K(H_U^{\bullet})}(\mathbf{p}\circ q_H \bigg[ H_U^{\bullet}(2)\otimes_{H_U^{\bullet}}\mathbf{p}N^{\bullet} \bigg] \otimes_{H_U^{\bullet}} \mathbf{p}M^{\bullet} , \mathbf{p}R^{\bullet})\\
 &\simeq \Hom_{K(H_U^{\bullet})}(\mathbf{p}M^{\bullet} , \Hom_{H_U^{\bullet}}\big(\mathbf{p}\circ q_H \bigg[ H_U^{\bullet}(2)\otimes_{H_U^{\bullet}}\mathbf{p}N^{\bullet} \bigg], \mathbf{p}R^{\bullet}\big)) \\
 & \simeq \Hom_{D(H_U^{\bullet})}(M^{\bullet} , q_H \Hom_{H_U^{\bullet}}\big(\mathbf{p}\circ q_H \bigg[ H_U^{\bullet}(2)\otimes_{H_U^{\bullet}}\mathbf{p}N^{\bullet} \bigg], \mathbf{p}R^{\bullet}\big)) \\
 &=\Hom_{D(H_U^{\bullet})}(M^{\bullet}, \Hom^{\boxtimes}(N^{\bullet}, R^{\bullet}))
\end{align*}
by the usual tensor-hom adjunction for dg-modules.
\end{proof}

The Yoneda lemma immediately implies the following consequence of Theorem \ref{tensor} and Proposition \ref{adjoint}.

\begin{corollary}
Let $V^{\bullet}$ and $W^{\bullet}$ be objects of $D(G)$. Then there is an isomorphism
$$
\Hom^{\boxtimes}(H(V^{\bullet}), H(W^{\bullet})) \overset{\sim}{\longrightarrow} H(R\underline{\Hom}(V^{\bullet}, W^{\bullet})).
$$
\end{corollary}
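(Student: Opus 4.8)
The plan is to deduce the isomorphism from the Yoneda lemma in $D(H_U^{\bullet})$. It suffices to construct, naturally in an arbitrary object $M^{\bullet}$ of $D(H_U^{\bullet})$, an isomorphism
\[
\Hom_{D(H_U^{\bullet})}\big(M^{\bullet}, \Hom^{\boxtimes}(H(V^{\bullet}), H(W^{\bullet}))\big) \;\simeq\; \Hom_{D(H_U^{\bullet})}\big(M^{\bullet}, H(R\underline{\Hom}(V^{\bullet}, W^{\bullet}))\big);
\]
the Yoneda lemma then provides a canonical isomorphism between the two objects appearing on the right of these $\Hom$-groups. Since $H$ is an equivalence we may write $M^{\bullet} = H(U^{\bullet})$ for some $U^{\bullet}$ in $D(G)$, which lets us transport the whole computation back to $D(G)$, where the closed symmetric monoidal adjunction of \cite{SS} Cor.\ 3.2 is available.

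Concretely, I would chain the following natural isomorphisms. First, Proposition \ref{adjoint} with $N^{\bullet} = H(V^{\bullet})$ and $R^{\bullet} = H(W^{\bullet})$ gives
\[
\Hom_{D(H_U^{\bullet})}\big(M^{\bullet}, \Hom^{\boxtimes}(H(V^{\bullet}), H(W^{\bullet}))\big) \simeq \Hom_{D(H_U^{\bullet})}\big(M^{\bullet} \boxtimes H(V^{\bullet}), H(W^{\bullet})\big).
\]
Next, writing $M^{\bullet} = H(U^{\bullet})$ and applying Theorem \ref{tensor}, we have $M^{\bullet} \boxtimes H(V^{\bullet}) = H(U^{\bullet}) \boxtimes H(V^{\bullet}) \simeq H(U^{\bullet} \otimes_k V^{\bullet})$, so full faithfulness of $H$ identifies the last group with $\Hom_{D(G)}(U^{\bullet} \otimes_k V^{\bullet}, W^{\bullet})$. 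The closed monoidal structure on $D(G)$ then yields $\Hom_{D(G)}(U^{\bullet} \otimes_k V^{\bullet}, W^{\bullet}) \simeq \Hom_{D(G)}(U^{\bullet}, R\underline{\Hom}(V^{\bullet}, W^{\bullet}))$, and applying $H$ once more turns this into $\Hom_{D(H_U^{\bullet})}(M^{\bullet}, H(R\underline{\Hom}(V^{\bullet}, W^{\bullet})))$. Composing these isomorphisms establishes the displayed claim, and the resulting isomorphism $\Hom^{\boxtimes}(H(V^{\bullet}), H(W^{\bullet})) \xrightarrow{\sim} H(R\underline{\Hom}(V^{\bullet}, W^{\bullet}))$ is automatically natural in $V^{\bullet}$ and $W^{\bullet}$ because every step is.

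The only point that requires real care — and the step I expect to be the main (mild) obstacle — is the bookkeeping: checking that each isomorphism in the chain is genuinely natural in $M^{\bullet}$, in particular that the isomorphism of Theorem \ref{tensor}, which is stated as natural in its two $D(G)$-arguments, transports through the equivalence $H$ to naturality in $M^{\bullet} = H(U^{\bullet})$, and that the left and right $H_U^{\bullet}$-module structures together with the variances in $\Hom^{\boxtimes}$ and in $R\underline{\Hom}$ line up correctly at each stage. None of this is substantive, which is why the statement is only a corollary; but it is the part one actually has to write down.
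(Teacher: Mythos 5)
Your argument is correct and is exactly the paper's intended proof: the paper simply states that the corollary follows from Theorem \ref{tensor} and Proposition \ref{adjoint} via the Yoneda lemma, and your chain of natural isomorphisms (adjunction $\boxtimes \dashv \Hom^{\boxtimes}$, then transport along the equivalence $H$ and the closed monoidal adjunction in $D(G)$) is the spelled-out version of that one-line deduction. Nothing differs in substance; you have just written down the details the authors left implicit.
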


In particular this gives an alternative description of the duality functor $\Delta$ for dg-modules:

\begin{corollary}
$\Delta(M^{\bullet}) \simeq \Hom^{\boxtimes}(M^{\bullet},\mathbf{U})$.
\end{corollary}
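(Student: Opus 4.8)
The plan is to read this off directly from the preceding corollary, together with the defining formula $\Delta(M^{\bullet}) = H(R\underline{\Hom}(T(M^{\bullet}),k))$ recalled at the beginning of Section \ref{sec:duality-functor} and the identity $\mathbf{U} = H(k)$.

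First I would specialize the preceding corollary to $W^{\bullet} = k$, obtaining for every object $V^{\bullet}$ of $D(G)$ an isomorphism $\Hom^{\boxtimes}(H(V^{\bullet}), \mathbf{U}) \simeq H(R\underline{\Hom}(V^{\bullet},k))$. Next I would substitute $V^{\bullet} = T(M^{\bullet})$: the right-hand side becomes $H(R\underline{\Hom}(T(M^{\bullet}),k))$, which is exactly $\Delta(M^{\bullet})$ by definition. It then remains to rewrite the left-hand side, and here I would invoke the natural isomorphism $\text{Id}_{D(H_U^{\bullet})} \xrightarrow{\sim} H \circ T$, which yields $M^{\bullet} \xrightarrow{\sim} H(T(M^{\bullet}))$ in $D(H_U^{\bullet})$; applying the functor $\Hom^{\boxtimes}(-,\mathbf{U})$ (functorial by Definition \ref{internal}, since $\mathbf{p}$, $H_U^{\bullet}(2)\otimes_{H_U^{\bullet}}-$, $q_H$ and $\Hom_{H_U^{\bullet}}(-,\mathbf{p}\mathbf{U})$ are all functorial) transports this to $\Hom^{\boxtimes}(M^{\bullet},\mathbf{U}) \simeq \Hom^{\boxtimes}(H(T(M^{\bullet})),\mathbf{U})$. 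Composing the three isomorphisms gives the claim.

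I do not expect a genuine obstacle here: all the real work has already been done in Theorem \ref{tensor}, Proposition \ref{adjoint} and the preceding corollary, and what remains is purely formal bookkeeping. The one point that deserves a word of care — if one wants the isomorphism to be natural in $M^{\bullet}$ — is that the isomorphism of the preceding corollary is natural in its first variable; this holds because it is obtained from the functorial adjunction isomorphism of Proposition \ref{adjoint} via the Yoneda lemma. As a consistency check, and as an alternative route that bypasses the preceding corollary, one could also unwind $\Hom^{\boxtimes}(M^{\bullet},\mathbf{U})$ straight from Definition \ref{internal}: using that $\mathbf{p}\mathbf{U}$ is a homotopically projective resolution of $H_U^{\bullet}(0)$ and the quasi-isomorphism $H_U^{\bullet} \xrightarrow{\sim} H_U^{\bullet}(1)$ one simplifies $H_U^{\bullet}(2)\otimes_{H_U^{\bullet}}\mathbf{p}\mathbf{U}$, and then matches the resulting expression against the formula for $\Delta$ derived in Section \ref{sec:duality-functor}; but the argument through the preceding corollary is the shortest.
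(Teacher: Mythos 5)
Your argument is correct and matches the paper's intent: the paper states this corollary without proof as an immediate consequence of the preceding corollary, and your derivation (specialize to $W^{\bullet}=k$, substitute $V^{\bullet}=T(M^{\bullet})$, use $\mathbf{U}=H(k)$, $M^{\bullet}\simeq H(T(M^{\bullet}))$, and the definition of $\Delta$) is exactly the intended bookkeeping.
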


\bigskip

\noindent {\it{E-mail addresses}}: {\texttt{pschnei@wwu.de}, {\texttt{csorensen@ucsd.edu}}

\bigskip

\noindent {\sc{Peter Schneider, Math. Institut, Universit\"a{}t M\"{u{nster, M\"{u}nster, Germany.}}

\bigskip

\noindent {\sc{Claus Sorensen, Dept. of Mathematics, UC San Diego, La Jolla, USA.}}


\begin{thebibliography}{B-GAL}

\bibitem[BL]{BL}
Bernstein J., Lunts V.: \emph{Equivariant sheaves and functors}. Springer LNM 1578.

%
%
%

\bibitem[DM]{DM}
Deligne, P., and Milne, J.S.: \emph{Tannakian Categories}, in Hodge Cycles, Motives, and Shimura Varieties, LNM 900, 1982, pp. 101-228.

%
%


\bibitem[Har]{Har}
Hartshorne R.: \emph{Residues and Duality}. Springer Lect.\ Notes Math.\ vol.\ 20, 1966

\bibitem[KM]{KM}
Kriz I., May J. P.: \emph{Operads, algebras, modules and motives}. Ast\'e{}risque, tome 233 (1995).

%
%
%
%
%

\bibitem[Mac]{Mac}
Mac Lane S.: \emph{Homology}. Classics in Mathematics. Springer, Fourth Printing 1994.

\bibitem[Nee]{Nee}
Neeman A.: \emph{Triangulated categories}. Annals of Math. Studies, Number 148, Princeton Univ. Press (2001).

%
%

\bibitem[OS]{OS}
Ollivier R., Schneider P.: \emph{The modular pro-$p$ Iwahori-Hecke $\Ext$-algebra}.
Proc.\ Symp.\ Pure Math.\ 101, 255 - 308 (2019)

%

\bibitem[DGA]{DGA}
Schneider P.: \emph{Smooth representations and Hecke modules in characteristic $p$}. Pacific J. Math.\ 279, 447-464 (2015)

\bibitem[Schn]{Schn}
Schn\"u{}rer O.: \emph{Six operations on dg enhancements of derived categories of sheaves}. Selecta Math. New Ser. 24 (2018), no. 3, 1805-1911.

\bibitem[ScSc]{ScSc}
Schneider P., Scherotzke S.: \emph{Derived parabolic induction}. Bulletin of the London Mathematical Society 54, 264-274 (2022).

\bibitem[SS]{SS}
Schneider P., Sorensen C.: \emph{Duals and admissibility in natural characteristic}. Represent. Theory 27 (2023), 30-50.

%
%

\bibitem[Vig]{Vig}
Vigneras M.-F.: \emph{Repr\'esentations $\ell$-modulaires d'un groupe r\'eductifs $p$-adiques avec $\ell \neq p$}. Progress Math.\ vol.\ 131, Birkh\"auser 1996

\bibitem[Yek]{Yek}
Yekutieli A.: \emph{Derived Categories}. Cambridge Univ.\ Press 2020

\end{thebibliography}
\end{document}